\def\th@plain{%
  \itshape 
}
\renewenvironment{proof}[1][\proofname]{\par
  \pushQED{\qed}%
  \normalfont \topsep6\p@\@plus6\p@\relax
  \trivlist
  \item[\hskip\labelsep
        \bfseries
    #1\@addpunct{.}]\ignorespaces
}{%
  \popQED\endtrivlist\@endpefalse
}
\newtheorem{theorem}{Theorem}[section]
\numberwithin{equation}{section}
\newtheorem{lemma}[theorem]{Lemma}
\newtheorem{thm}{Theorem}[section]
\newtheorem{cor}[thm]{Corollary}
\numberwithin{equation}{section}
\numberwithin{equation}{section}
\begin{document}

\title{\LARGE Exact rainbow numbers of cycle-related graphs in multi-hubbed wheels}
\author{Mengyao Dai~~~~~~~~ Xin Zhang\thanks{Corresponding author.}\\
{\small School of Mathematics and Statistics, Xidian University, Xi'an, 710071, China}\\
{\small  mengyaodai@stu.xidian.edu.cn \quad  xzhang@xidian.edu.cn}
}


\maketitle

\begin{abstract}\baselineskip 0.60cm

The rainbow number ${\rm rb}(G, H)$ is the minimum number of colors $k$ for which any edge-coloring of $G$ with at least $k$ colors guarantees a rainbow subgraph isomorphic to $H$. The rainbow number has many applications in diverse fields such as wireless communication networks, cryptography, bioinformatics, and social network analysis. In this paper, we determine the exact rainbow number $\mathrm{rb}(G, H)$ where $G$ is a multi-hubbed wheel graph $W_d(s)$, defined as the join of $s$ isolated vertices and a cycle $C_d$ of length $d$ (i.e., $W_d(s) = \overline{K_s} + C_d$), and $H = \theta_{t,\ell}$ represents a cycle $C_t$ of length $t$ with $0 \leq \ell \leq t-3$ chords emanating from a common vertex, by establishing
\[
{\rm rb}(W_{d}(s), \theta_{t,\ell}) = 
\begin{cases}
\left\lfloor \dfrac{2t - 5}{t - 2}d \right\rfloor + 1, & \text{if } \ell=t-3,~s = 1 \text{ and } t\ge 4, \\[10pt]
\left\lfloor \dfrac{3t-10}{t - 3}d \right\rfloor + 1, & \text{if } \ell=t-3,~s = 2\text{ and } t\ge 6,\\[10pt]
\left\lfloor \dfrac{(s + 1)t - (3s + 4)}{t - 3}d \right\rfloor + 1, & \text{if } \ell=t-3,~s \geq 3\text{ and } t\ge 7,\\[10pt]
\left\lfloor \dfrac{2t - 7}{t - 3}d \right\rfloor + 1, & \text{if } s = 1 \text{ and } t\ge \max\{5,\ell+4\},
\end{cases}
\]
when $d\geq 3t-5$, with all bounds for the parameter $t$ presented here being tight. This addresses the problems proposed by Jakhar, Budden, and Moun (2025), which involve investigating the rainbow numbers of large cycles and large chorded cycles in wheel graphs (specifically corresponding to the cases in our framework where $s=1$ and $\ell\in \{0,1\}$). Furthermore, it completely determines the rainbow numbers of cycles of arbitrary length in large wheel graphs, thereby generalizing a result of Lan, Shi, and Song (2019).



\vspace{3mm}\noindent \emph{Keywords: rainbow number; anti-Ramsey number; edge coloring}.

\end{abstract}

\baselineskip 0.60cm

\section{Introduction}\label{sec1}

A subgraph $H$ of an edge-colored graph $G$ is called an \textit{rainbow subgraph} if no two edges of $H$ share the same color. In 1973, Erd{\H o}s, Simonovits, and S\'os \cite{MR379258} introduced the \textit{anti-Ramsey number} $ar_{\mathcal{P}}(n, H)$ as the maximum integer $t$ such that there exists an edge-coloring of $K_n$ using exactly $t$ colors that contains no rainbow subgraph isomorphic to $H$. This concept is closely related to the \textit{rainbow number} ${\rm rb}(G, H)$, defined as the minimum number of colors $k$ for which any edge-coloring of $G$ with at least $k$ colors guarantees a rainbow subgraph isomorphic to $H$. It follows directly from these definitions that  
${\rm rb}(K_n, H) = ar_{\mathcal{P}}(n, H) + 1.$

In wireless sensor networks or satellite communication, signal transmission may fail due to frequency conflicts or malicious interference. By assigning distinct frequency bands (colors) to edges, the rainbow number $\mathrm{rb}(G, H)$ quantifies the minimum number of bands required to guarantee an interference-resistant path (rainbow path) between any two nodes. For instance, if $\mathrm{rb}(G, P_3) = k$, then at least $k$ bands are needed to ensure a path without consecutive same-frequency edges ($P_3$ denotes a 2-edge path). In encrypted networks, rainbow paths model multi-hop encrypted relays, where each edge represents an encryption layer and colors denote unique keys; the minimality of $\mathrm{rb}(G, H)$ prevents attackers from decrypting entire paths by intercepting monochromatic edges, enhancing security. In protein interaction networks, vertices represent proteins, edges denote interactions, and colors signify functional categories (e.g., catalysis, signaling); a small $\mathrm{rb}(G, K_3)$ indicates stable triads of functionally diverse proteins (e.g., kinase-phosphatase-substrate complexes), critical for understanding cellular regulation. In social networks, colors reflect topics (e.g., tech, entertainment, politics), and $\mathrm{rb}(G, S_4)$ (for star subgraphs) measures the difficulty of accessing diverse information via colorful paths; a high $\mathrm{rb}(G, S_4)$ suggests topic homogeneity among an influencer's followers, necessitating algorithmic interventions to diversify content and mitigate ``filter bubbles.''

Rainbow numbers (or anti-Ramsey numbers) for some special graph classes have been studied in the literature \cite{Alon1983OnAC,Jiang_West_2003,CHEN20093370,DBLP:journals/combinatorics/FujitaKSS09,10.1016/j.disc.2011.10.017,JENDROL2014158,XU2016193}; researchers have also investigated rainbow numbers in specific host graph families, including 
Kneser graphs \cite{JIN2020124724}, 
bipartite graphs \cite{LI20092575}, 
complete multipartite graphs \cite{10.1007/s00373-021-02302-z,ZHANG2022113100,AN2026114692},
planar graphs \cite{GYORI2025114523}, 
bipartite planar graph \cite{JIN2022127356,REN202437,10.1016/j.disc.2025.114596},
plane triangulations \cite{JENDROL2014158,10.1002/jgt.21803,CHEN20192106,LAN20193216,QIN2019221,QIN2020124888,QIN2021125918,QIN2021112301}, and
outerplanar graphs \cite{JIN20182846,sym14061252,10.1016/j.dam.2023.11.049}. Notably, the hypergraph variant of this problem has received dedicated attention
\cite{zkahya2013AntiRamseyNO,doi:10.1137/19M1244950,JIN2021112594,XUE2022112782,Liu_2023}.

In particular, Jendrol', Schiermeyer, and Tu \cite{JENDROL2014158} pioneered the study of rainbow numbers in plane triangulations by focusing on matchings; for a matching of size $k$ (denoted $M_k$), they established exact value for ${\rm rb}(T_n,M_t)$ when $t \leq 4$, and derived lower/upper bounds for all $t \geq 5$ with $n \geq 2t$, where $T_n$ denotes the class of plane triangulations on $n$ vertices and ${\rm rb}(T_n,M_t):= \max\{{\rm rb}(G,M_t)~|~G\in T_n\}$.
Subsequently, Qin, Lan, and Shi \cite{QIN2019221} determined the exact value of ${\rm rb}(T_n,M_5)$ for $n \geq 11$ and improved the upper bound to
$\mathrm{rb}(T_n,M_t) \leq 2n + 6t - 16$ for all $t \geq 5$ and $n \geq 2t$.
Chen, Lan, and Song \cite{CHEN20192106} further obtained the exact value of $\mathrm{rb}(T_n,M_6)$ and refined lower/upper bounds for general $\mathrm{rb}(T_n,M_t)$. For sufficiently large $n$ relative to $t \geq 7$, Qin, Lan, Shi, and Yue \cite{QIN2021112301} established a strengthened lower bound for $\mathrm{rb}(T_n,M_t)$.

In 2015, Hor{\v n}{\'a}k, Jendrol', Schiermeyer, and Sot{\'a}k \cite{10.1002/jgt.21803} initiated the study of rainbow numbers for cycles in planar graphs by establishing that $\mathrm{rb}(T_n, C_3) = \left\lfloor \frac{3n}{2} \right\rfloor - 2$ holds for all $n \geq 4$. Regarding the general case of $\mathrm{rb}(T_n, C_k)$, they established lower bounds for all integers $k\ge 4$, and additionally derived upper bounds specifically for $k=4$ and $k=5$; subsequently, Lan, Shi, and Song \cite{LAN20193216} improved the lower bound for $k \geq 5$ with $n \geq k^2 - k$, and determined explicit upper bounds for 
$\mathrm{rb}(T_n, C_6)$ and $\mathrm{rb}(T_n, C_7)$.

The study of $\mathrm{rb}(T_n, H)$ is closely related to the investigation of $\mathrm{rb}(W_d, H)$, where $W_d$ denotes a wheel graph on $d+1$ vertices; this connection arises from the structural property that any vertex with degree $d \geq 3$ in a plane triangulation induces a $W_d$ subgraph via its closed neighborhood. Breakthroughs by Qin, Lei, and Li \cite{QIN2020124888} have substantially advanced this relationship by determining exact values for rainbow numbers in wheel graphs: they proved $\mathrm{rb}(W_d, C_3) = d + 2$ for triangles, and extended these results to pendant-augmented structures, obtaining $\mathrm{rb}(W_d, H) = d + 2$ when $H$ is a triangle with a single pendant edge, $\mathrm{rb}(W_d, H) = d + 4$ when $H$ is a triangle with two concentrated pendant edges, and $\mathrm{rb}(W_d, H) = \left\lfloor \frac{4d}{3} \right\rfloor + 1$ when $H$ is a triangle with two disjoint pendant edges. Hor\v{n}\'ak, Jendrol', Schiermeyer, and Sot\'ak \cite{10.1002/jgt.21803} further proved that  
$\mathrm{rb}(W_{d}, C_4) = \left\lfloor \frac{4d}{3} \right\rfloor + 1$ and  
$\mathrm{rb}(W_{d}, C_5) = \left\lfloor \frac{3d}{2} \right\rfloor + 1$.  
Lan, Shi, and Song \cite{LAN20193216} subsequently determined 
$\mathrm{rb}(W_{d}, C_6) = \left\lfloor \frac{5d}{3} \right\rfloor + 1$ and 
established bounds for $\mathrm{rb}(W_{d}, C_k)$ as follows:
\begin{thm} \label{oldtheorem}
    For integers $d,k$ with $d\ge k-1$ and $k\ge 5$, $\left\lfloor \frac{2k-7}{k-3}d \right\rfloor+1 \le {\rm rb}(W_{d},C_k) \le \left\lfloor \frac{2k-5}{k-2}d \right\rfloor+1$.
\end{thm}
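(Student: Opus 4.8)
The plan is to prove the two bounds separately, after first recording the structure of the copies of $C_k$ inside $W_d$. Write $w$ for the hub and $v_0v_1\cdots v_{d-1}$ for the rim cycle. Since $d\ge k-1$ forces $k\le d+1$ (so $k-2\le d-1$), every copy of $C_k$ either avoids $w$---in which case it must be the whole rim and occurs only when $k=d$---or passes through $w$, in which case it consists of a path $v_iv_{i+1}\cdots v_{i+k-2}$ of $k-2$ consecutive rim edges together with the two end spokes $wv_i$ and $wv_{i+k-2}$. Thus, for $i\in\mathbb{Z}_d$, the hub-cycles are in bijection with the $d$ cyclic ``windows'' of $k-2$ consecutive rim edges, and a coloring has a rainbow $C_k$ through $w$ iff some window of $k-2$ rim edges is rainbow and its two end spokes receive colors distinct from one another and from the window.

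For the lower bound I would exhibit an edge-coloring with $\lfloor\frac{2k-7}{k-3}d\rfloor$ colors and no rainbow $C_k$. Color all $d$ spokes with distinct colors, and give the rim edges distinct colors except that I plant $p=\lceil d/(k-3)\rceil$ edge-disjoint adjacent monochromatic pairs $\{v_jv_{j+1},\,v_{j+1}v_{j+2}\}$, spaced $k-3$ apart around the rim. Each adjacent pair is contained in exactly the $k-3$ windows containing both of its edges, so spacing the pairs $k-3$ apart makes every one of the $d$ length-$(k-2)$ windows contain a monochromatic pair; and since $k\ge 5$ gives $k-3\ge 2$, the pairs are genuinely disjoint. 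Every hub-cycle therefore repeats a color, and when $k=d$ the rim cycle also contains a planted pair, so the coloring is rainbow-$C_k$-free. The color count is $d$ (spokes) $+\,(d-p)$ (rim) $=2d-\lceil d/(k-3)\rceil=\lfloor\frac{2k-7}{k-3}d\rfloor$, giving ${\rm rb}(W_d,C_k)\ge\lfloor\frac{2k-7}{k-3}d\rfloor+1$.

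For the upper bound I would use the extremal reduction $\mathrm{ar}(G,H)\le \mathrm{ex}(G,H)$: from any rainbow-$C_k$-free coloring with $N$ colors, choose one edge per color to form a spanning rainbow subgraph $R$ with $N$ edges; any copy of $C_k$ in $R$ would be rainbow, so $R$ is $C_k$-free and $N\le \mathrm{ex}(W_d,C_k)$. It then suffices to show $\mathrm{ex}(W_d,C_k)\le 2d-\lceil d/(k-2)\rceil=\lfloor\frac{2k-5}{k-2}d\rfloor$. A $C_k$-free subgraph is obtained from $W_d$ by deleting a set $X$ of edges that meets every copy of $C_k$, so in particular $X$ meets each of the $d$ hub-cycles. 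A double count finishes the job: each rim edge lies in at most $k-2$ of these $d$ windows and each spoke lies in exactly $2\le k-2$ of them, so $\sum_{e\in X}(\#\text{ hub-cycles through }e)\ge d$ forces $|X|\ge\lceil d/(k-2)\rceil$, whence $\mathrm{ex}(W_d,C_k)\le 2d-\lceil d/(k-2)\rceil$ and ${\rm rb}(W_d,C_k)\le\lfloor\frac{2k-5}{k-2}d\rfloor+1$.

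The routine but delicate points are the floor bookkeeping---checking $2d-\lceil d/(k-3)\rceil=\lfloor\frac{2k-7}{k-3}d\rfloor$ and $2d-\lceil d/(k-2)\rceil=\lfloor\frac{2k-5}{k-2}d\rfloor$, both instances of $\lfloor 2d-x\rfloor=2d-\lceil x\rceil$---and the wrap-around case where $d$ is not a multiple of $k-3$, where I must confirm that $\lceil d/(k-3)\rceil$ disjoint pairs still cover all $d$ windows. The genuine obstacle, however, is that this argument leaves exactly the stated gap between $\frac{2k-7}{k-3}$ and $\frac{2k-5}{k-2}$: the spanning-subgraph bound $\mathrm{ar}\le\mathrm{ex}$ is not tight for $C_k$, because an extremal $C_k$-free subgraph of $W_d$ cannot in general be completed to a rainbow-$C_k$-free coloring without reusing colors in a way that creates a rainbow cycle. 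Closing this gap to the exact value---the contribution of the present paper---therefore requires a finer argument than the extremal reduction used here.
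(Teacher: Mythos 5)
First, a point of orientation: the paper does not prove Theorem~\ref{oldtheorem} at all --- it quotes it as a known result of Lan, Shi, and Song --- so your attempt can only be compared against the paper's related machinery. Measured that way, your proof splits into one half that mirrors the paper and one half that is genuinely different. Your lower-bound coloring (all spokes rainbow, monochromatic adjacent rim pairs spaced $k-3$ apart) is essentially the construction of Theorem~\ref{thm:extremal-2} specialized to $s=1$, $\ell=0$. Your upper bound, however, goes by the classical reduction $\mathrm{rb}(G,H)\le \mathrm{ex}(G,H)+1$ (pick one edge per color to obtain a rainbow spanning subgraph, which must be $C_k$-free), followed by a covering double count showing that any $C_k$-free subgraph of $W_d$ omits at least $\lceil d/(k-2)\rceil$ edges; this is correct, elementary, and valid for all $d\ge k-1$. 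The paper has no analogue of this argument: its upper bounds (Theorems~\ref{thm:W-F} and~\ref{thm:Ws-F-and-Wd-theta}, built on Lemmas~\ref{lem:p2}--\ref{lem:all-used-bounds}) instead count copies of the target graph meeting repeated color classes, which is heavier machinery but, for $d\ge 3t-5$, produces the stronger bound $\left\lfloor\frac{2t-7}{t-3}d\right\rfloor+1$ and hence the exact value (Corollary~\ref{thm:main-3}). Your closing observation --- that the $\mathrm{ar}\le\mathrm{ex}$ reduction is inherently too lossy to close the gap --- is exactly right and explains why the paper needs the finer counting.

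One concrete slip in your lower bound needs repair: the claim that the $p=\lceil d/(k-3)\rceil$ planted pairs ``are genuinely disjoint'' is false whenever $d\equiv 1\pmod{k-3}$. For instance, with $k=5$ and $d$ odd you would need $p=(d+1)/2$ disjoint pairs, occupying $d+1>d$ rim edges; the last pair wraps around and shares an edge with the first. The fix is to merge the two offending pairs into one monochromatic triple of three consecutive rim edges. This still covers every window of $k-2$ consecutive rim edges (windows crossing the seam now contain two consecutive edges of the triple), and it still uses exactly $d-p$ rim colors (a triple saves two colors, replacing two pairs that each save one), so the count $2d-\lceil d/(k-3)\rceil=\left\lfloor\frac{2k-7}{k-3}d\right\rfloor$ is unchanged. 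This is precisely how the construction in Theorem~\ref{thm:extremal-2} handles the case where the last group contains a single rim edge (it recolors $v_dv_1$ with the color of $v_{d-1}v_d$). With that repair, your proof is complete.
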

\noindent 
Recently, Jakhar, Budden, and Moun \cite{jakhar2025rainbow} proved that for all integers $d \geq 3$,  
$\mathrm{rb}(W_{d}, C^+_4) = \left\lfloor \frac{3d}{2} \right\rfloor + 1$, and for all $d \geq 5$,  
$\mathrm{rb}(W_{d}, C^+_5) = \left\lfloor \frac{3d}{2} \right\rfloor + 1$, where $C^+_t$ represents a $t$-cycle with an arbitrary chord.
Moreover, they established that $\mathrm{rb}(W^*_{d}, C_3) = \left\lfloor \frac{3d}{2} \right\rfloor + 1$ and  
$\left\lfloor \frac{4d}{3} \right\rfloor + 2 \leq \mathrm{rb}(W^*_{d}, C_4) \leq 2d + 1$, where $W^*_{d}$ is a double-hubbed wheels, i.e, the join of $2K_1$ and $C_d$. These findings elucidate how the local wheel structures embedded in $T_n$ directly constrain the rainbow number estimates for cycle-related subgraphs.

This paper deepens the research on rainbow numbers pertaining to cycle-related graphs within wheel graphs or wheel-related graphs by broadening the range of graph classes under investigation and enhancing the established extremal bounds.

The \textit{$t$-fan} graph $F_t$ is constructed from a $t$-cycle $C_t$ with vertices labeled $v_1,v_2,\dots,v_t$ in sequential order (forming the \emph{boundary cycle}) by designating $v_1$ as the \emph{central vertex} and adding all chords of the form $v_1v_i$ for $3 \leq i \leq t-1$; for brevity, we denote this construction as $F_t := [v_1; v_2\cdots v_t]$. Let $\Theta_{t,\ell}$ denote a class of graphs. For any graph $\theta_{t,\ell}$ within this class, it consists of a $t$-cycle along with $\ell$ chords, and all these $\ell$ chords originate from the same vertex. When $\ell = t - 3$, the graph $F_t$ and the graph $\theta_{t,\ell}$ represent the identical structure, specifically, a $t$-cycle with all feasible chords emanating from a single vertex. 
When $\ell=0$, the graph $\theta_{t,\ell}$ is exactly a $t$-cycle $C_t$.
The \emph{wheel graph} $W_d$ consists of a central \textit{hub} vertex connected to all vertices of a $d$-cycle $C_d$. The \emph{$s$-hubbed wheel} $W_d(s)$ is obtained by ``blowing up'' the hub of $W_d$ into $s$ independent vertices (i.e., replacing the single hub with $s$ non-adjacent vertices, each connected to all vertices of the original $d$-cycle); we call these $s$ vertices the \emph{hub} or \emph{hub vertices} and these $d$ vertices of the cycle \emph{boundary vertices} of $W_d(s)$. In $W_d(s)$, edges incident to one hub vertex are called \textit{spokes}, while all other edges are referred to as \textit{rim edges}. Figure~\ref{fig:A} provides concrete instances that exemplify the concepts introduced here.

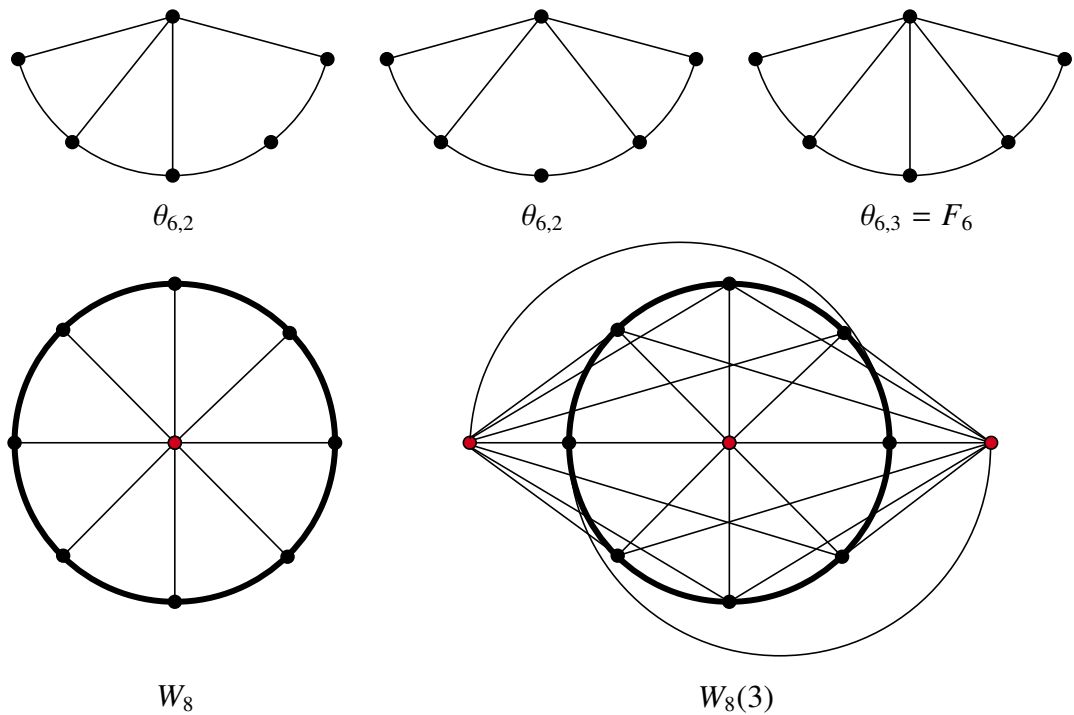
\begin{figure}[h]
    \centering

\tikzset{every picture/.style={line width=0.6pt}} 

\begin{tikzpicture}[x=0.6pt,y=0.6pt,yscale=-1,xscale=1]

\draw  [draw opacity=0] (205.88,49.16) .. controls (194.38,91.7) and (155.51,123) .. (109.33,123) .. controls (63.38,123) and (24.67,92.01) .. (12.96,49.78) -- (109.33,23) -- cycle ; \draw   (205.88,49.16) .. controls (194.38,91.7) and (155.51,123) .. (109.33,123) .. controls (63.38,123) and (24.67,92.01) .. (12.96,49.78) ;  
\draw  [fill={rgb, 255:red, 0; green, 0; blue, 0 }  ,fill opacity=1 ] (105.33,23) .. controls (105.33,20.79) and (107.12,19) .. (109.33,19) .. controls (111.54,19) and (113.33,20.79) .. (113.33,23) .. controls (113.33,25.21) and (111.54,27) .. (109.33,27) .. controls (107.12,27) and (105.33,25.21) .. (105.33,23) -- cycle ;
\draw  [fill={rgb, 255:red, 0; green, 0; blue, 0 }  ,fill opacity=1 ] (105.33,123) .. controls (105.33,120.79) and (107.12,119) .. (109.33,119) .. controls (111.54,119) and (113.33,120.79) .. (113.33,123) .. controls (113.33,125.21) and (111.54,127) .. (109.33,127) .. controls (107.12,127) and (105.33,125.21) .. (105.33,123) -- cycle ;
\draw  [fill={rgb, 255:red, 0; green, 0; blue, 0 }  ,fill opacity=1 ] (201.88,49.8) .. controls (201.88,47.59) and (203.67,45.8) .. (205.88,45.8) .. controls (208.09,45.8) and (209.88,47.59) .. (209.88,49.8) .. controls (209.88,52.01) and (208.09,53.8) .. (205.88,53.8) .. controls (203.67,53.8) and (201.88,52.01) .. (201.88,49.8) -- cycle ;
\draw  [fill={rgb, 255:red, 0; green, 0; blue, 0 }  ,fill opacity=1 ] (42.67,102) .. controls (42.67,99.79) and (44.46,98) .. (46.67,98) .. controls (48.88,98) and (50.67,99.79) .. (50.67,102) .. controls (50.67,104.21) and (48.88,106) .. (46.67,106) .. controls (44.46,106) and (42.67,104.21) .. (42.67,102) -- cycle ;
\draw  [fill={rgb, 255:red, 0; green, 0; blue, 0 }  ,fill opacity=1 ] (8.96,49.78) .. controls (8.96,47.57) and (10.75,45.78) .. (12.96,45.78) .. controls (15.17,45.78) and (16.96,47.57) .. (16.96,49.78) .. controls (16.96,51.99) and (15.17,53.78) .. (12.96,53.78) .. controls (10.75,53.78) and (8.96,51.99) .. (8.96,49.78) -- cycle ;
\draw    (109.33,23) -- (109.33,127) ;
\draw    (12.96,49.78) -- (109.33,23) ;
\draw    (109.33,23) -- (205.88,49.16) ;
\draw  [fill={rgb, 255:red, 0; green, 0; blue, 0 }  ,fill opacity=1 ] (166.7,102) .. controls (166.7,99.79) and (168.49,98) .. (170.7,98) .. controls (172.91,98) and (174.7,99.79) .. (174.7,102) .. controls (174.7,104.21) and (172.91,106) .. (170.7,106) .. controls (168.49,106) and (166.7,104.21) .. (166.7,102) -- cycle ;
\draw    (109.33,23) -- (46.67,102) ;
\draw  [draw opacity=0] (435.92,49.16) .. controls (424.42,91.7) and (385.55,123) .. (339.37,123) .. controls (293.42,123) and (254.71,92.01) .. (243,49.78) -- (339.37,23) -- cycle ; \draw   (435.92,49.16) .. controls (424.42,91.7) and (385.55,123) .. (339.37,123) .. controls (293.42,123) and (254.71,92.01) .. (243,49.78) ;  
\draw  [fill={rgb, 255:red, 0; green, 0; blue, 0 }  ,fill opacity=1 ] (335.37,23) .. controls (335.37,20.79) and (337.16,19) .. (339.37,19) .. controls (341.58,19) and (343.37,20.79) .. (343.37,23) .. controls (343.37,25.21) and (341.58,27) .. (339.37,27) .. controls (337.16,27) and (335.37,25.21) .. (335.37,23) -- cycle ;
\draw  [fill={rgb, 255:red, 0; green, 0; blue, 0 }  ,fill opacity=1 ] (335.37,123) .. controls (335.37,120.79) and (337.16,119) .. (339.37,119) .. controls (341.58,119) and (343.37,120.79) .. (343.37,123) .. controls (343.37,125.21) and (341.58,127) .. (339.37,127) .. controls (337.16,127) and (335.37,125.21) .. (335.37,123) -- cycle ;
\draw  [fill={rgb, 255:red, 0; green, 0; blue, 0 }  ,fill opacity=1 ] (431.92,49.8) .. controls (431.92,47.59) and (433.71,45.8) .. (435.92,45.8) .. controls (438.13,45.8) and (439.92,47.59) .. (439.92,49.8) .. controls (439.92,52.01) and (438.13,53.8) .. (435.92,53.8) .. controls (433.71,53.8) and (431.92,52.01) .. (431.92,49.8) -- cycle ;
\draw  [fill={rgb, 255:red, 0; green, 0; blue, 0 }  ,fill opacity=1 ] (272.71,102) .. controls (272.71,99.79) and (274.5,98) .. (276.71,98) .. controls (278.92,98) and (280.71,99.79) .. (280.71,102) .. controls (280.71,104.21) and (278.92,106) .. (276.71,106) .. controls (274.5,106) and (272.71,104.21) .. (272.71,102) -- cycle ;
\draw  [fill={rgb, 255:red, 0; green, 0; blue, 0 }  ,fill opacity=1 ] (239,49.78) .. controls (239,47.57) and (240.79,45.78) .. (243,45.78) .. controls (245.21,45.78) and (247,47.57) .. (247,49.78) .. controls (247,51.99) and (245.21,53.78) .. (243,53.78) .. controls (240.79,53.78) and (239,51.99) .. (239,49.78) -- cycle ;
\draw    (243,49.78) -- (339.37,23) ;
\draw    (339.37,23) -- (435.92,49.16) ;
\draw  [fill={rgb, 255:red, 0; green, 0; blue, 0 }  ,fill opacity=1 ] (396.74,102) .. controls (396.74,99.79) and (398.53,98) .. (400.74,98) .. controls (402.95,98) and (404.74,99.79) .. (404.74,102) .. controls (404.74,104.21) and (402.95,106) .. (400.74,106) .. controls (398.53,106) and (396.74,104.21) .. (396.74,102) -- cycle ;
\draw    (339.37,23) -- (276.71,102) ;
\draw  [draw opacity=0] (665.92,49.16) .. controls (654.42,91.7) and (615.55,123) .. (569.37,123) .. controls (523.42,123) and (484.71,92.01) .. (473,49.78) -- (569.37,23) -- cycle ; \draw   (665.92,49.16) .. controls (654.42,91.7) and (615.55,123) .. (569.37,123) .. controls (523.42,123) and (484.71,92.01) .. (473,49.78) ;  
\draw  [fill={rgb, 255:red, 0; green, 0; blue, 0 }  ,fill opacity=1 ] (565.37,23) .. controls (565.37,20.79) and (567.16,19) .. (569.37,19) .. controls (571.58,19) and (573.37,20.79) .. (573.37,23) .. controls (573.37,25.21) and (571.58,27) .. (569.37,27) .. controls (567.16,27) and (565.37,25.21) .. (565.37,23) -- cycle ;
\draw  [fill={rgb, 255:red, 0; green, 0; blue, 0 }  ,fill opacity=1 ] (565.37,123) .. controls (565.37,120.79) and (567.16,119) .. (569.37,119) .. controls (571.58,119) and (573.37,120.79) .. (573.37,123) .. controls (573.37,125.21) and (571.58,127) .. (569.37,127) .. controls (567.16,127) and (565.37,125.21) .. (565.37,123) -- cycle ;
\draw  [fill={rgb, 255:red, 0; green, 0; blue, 0 }  ,fill opacity=1 ] (661.92,49.8) .. controls (661.92,47.59) and (663.71,45.8) .. (665.92,45.8) .. controls (668.13,45.8) and (669.92,47.59) .. (669.92,49.8) .. controls (669.92,52.01) and (668.13,53.8) .. (665.92,53.8) .. controls (663.71,53.8) and (661.92,52.01) .. (661.92,49.8) -- cycle ;
\draw  [fill={rgb, 255:red, 0; green, 0; blue, 0 }  ,fill opacity=1 ] (502.71,102) .. controls (502.71,99.79) and (504.5,98) .. (506.71,98) .. controls (508.92,98) and (510.71,99.79) .. (510.71,102) .. controls (510.71,104.21) and (508.92,106) .. (506.71,106) .. controls (504.5,106) and (502.71,104.21) .. (502.71,102) -- cycle ;
\draw  [fill={rgb, 255:red, 0; green, 0; blue, 0 }  ,fill opacity=1 ] (469,49.78) .. controls (469,47.57) and (470.79,45.78) .. (473,45.78) .. controls (475.21,45.78) and (477,47.57) .. (477,49.78) .. controls (477,51.99) and (475.21,53.78) .. (473,53.78) .. controls (470.79,53.78) and (469,51.99) .. (469,49.78) -- cycle ;
\draw    (569.37,23) -- (569.37,127) ;
\draw    (473,49.78) -- (569.37,23) ;
\draw    (569.37,23) -- (665.92,49.16) ;
\draw  [fill={rgb, 255:red, 0; green, 0; blue, 0 }  ,fill opacity=1 ] (626.74,102) .. controls (626.74,99.79) and (628.53,98) .. (630.74,98) .. controls (632.95,98) and (634.74,99.79) .. (634.74,102) .. controls (634.74,104.21) and (632.95,106) .. (630.74,106) .. controls (628.53,106) and (626.74,104.21) .. (626.74,102) -- cycle ;
\draw    (569.37,23) -- (506.71,102) ;
\draw    (339.37,23) -- (400.74,102) ;
\draw    (569.37,23) -- (630.74,102) ;
\draw  [color={rgb, 255:red, 0; green, 0; blue, 0 }  ,draw opacity=1 ][line width=2.25]  (10.67,291) .. controls (10.67,235.77) and (55.44,191) .. (110.67,191) .. controls (165.9,191) and (210.67,235.77) .. (210.67,291) .. controls (210.67,346.23) and (165.9,391) .. (110.67,391) .. controls (55.44,391) and (10.67,346.23) .. (10.67,291) -- cycle ;
\draw    (110.67,291) -- (210.67,291) ;
\draw  [fill={rgb, 255:red, 0; green, 0; blue, 0 }  ,fill opacity=1 ] (106.67,291) .. controls (106.67,288.79) and (108.46,287) .. (110.67,287) .. controls (112.88,287) and (114.67,288.79) .. (114.67,291) .. controls (114.67,293.21) and (112.88,295) .. (110.67,295) .. controls (108.46,295) and (106.67,293.21) .. (106.67,291) -- cycle ;
\draw    (110.67,291) -- (110.67,391) ;
\draw    (110.67,191) -- (110.67,291) ;
\draw    (10.67,291) -- (110.67,291) ;
\draw    (41,220) -- (110.67,291) ;
\draw    (110.67,291) -- (181,362.67) ;
\draw [fill={rgb, 255:red, 208; green, 2; blue, 27 }  ,fill opacity=1 ]   (110.67,291) -- (113.03,288.72) -- (182.33,222) ;
\draw    (110.67,291) -- (41,362) ;
\draw  [fill={rgb, 255:red, 0; green, 0; blue, 0 }  ,fill opacity=1 ] (106.67,191) .. controls (106.67,188.79) and (108.46,187) .. (110.67,187) .. controls (112.88,187) and (114.67,188.79) .. (114.67,191) .. controls (114.67,193.21) and (112.88,195) .. (110.67,195) .. controls (108.46,195) and (106.67,193.21) .. (106.67,191) -- cycle ;
\draw  [fill={rgb, 255:red, 0; green, 0; blue, 0 }  ,fill opacity=1 ] (37,220) .. controls (37,217.79) and (38.79,216) .. (41,216) .. controls (43.21,216) and (45,217.79) .. (45,220) .. controls (45,222.21) and (43.21,224) .. (41,224) .. controls (38.79,224) and (37,222.21) .. (37,220) -- cycle ;
\draw  [fill={rgb, 255:red, 0; green, 0; blue, 0 }  ,fill opacity=1 ] (6.67,291) .. controls (6.67,288.79) and (8.46,287) .. (10.67,287) .. controls (12.88,287) and (14.67,288.79) .. (14.67,291) .. controls (14.67,293.21) and (12.88,295) .. (10.67,295) .. controls (8.46,295) and (6.67,293.21) .. (6.67,291) -- cycle ;
\draw  [fill={rgb, 255:red, 0; green, 0; blue, 0 }  ,fill opacity=1 ] (37,362) .. controls (37,359.79) and (38.79,358) .. (41,358) .. controls (43.21,358) and (45,359.79) .. (45,362) .. controls (45,364.21) and (43.21,366) .. (41,366) .. controls (38.79,366) and (37,364.21) .. (37,362) -- cycle ;
\draw  [fill={rgb, 255:red, 0; green, 0; blue, 0 }  ,fill opacity=1 ] (106.67,391) .. controls (106.67,388.79) and (108.46,387) .. (110.67,387) .. controls (112.88,387) and (114.67,388.79) .. (114.67,391) .. controls (114.67,393.21) and (112.88,395) .. (110.67,395) .. controls (108.46,395) and (106.67,393.21) .. (106.67,391) -- cycle ;
\draw  [fill={rgb, 255:red, 0; green, 0; blue, 0 }  ,fill opacity=1 ] (177,362.67) .. controls (177,360.46) and (178.79,358.67) .. (181,358.67) .. controls (183.21,358.67) and (185,360.46) .. (185,362.67) .. controls (185,364.88) and (183.21,366.67) .. (181,366.67) .. controls (178.79,366.67) and (177,364.88) .. (177,362.67) -- cycle ;
\draw  [fill={rgb, 255:red, 0; green, 0; blue, 0 }  ,fill opacity=1 ] (206.67,291) .. controls (206.67,288.79) and (208.46,287) .. (210.67,287) .. controls (212.88,287) and (214.67,288.79) .. (214.67,291) .. controls (214.67,293.21) and (212.88,295) .. (210.67,295) .. controls (208.46,295) and (206.67,293.21) .. (206.67,291) -- cycle ;
\draw  [fill={rgb, 255:red, 0; green, 0; blue, 0 }  ,fill opacity=1 ] (178.33,222) .. controls (178.33,219.79) and (180.12,218) .. (182.33,218) .. controls (184.54,218) and (186.33,219.79) .. (186.33,222) .. controls (186.33,224.21) and (184.54,226) .. (182.33,226) .. controls (180.12,226) and (178.33,224.21) .. (178.33,222) -- cycle ;
\draw  [line width=2.25]  (356.67,291) .. controls (356.67,235.77) and (401.44,191) .. (456.67,191) .. controls (511.9,191) and (556.67,235.77) .. (556.67,291) .. controls (556.67,346.23) and (511.9,391) .. (456.67,391) .. controls (401.44,391) and (356.67,346.23) .. (356.67,291) -- cycle ;
\draw    (456.67,291) -- (556.67,291) ;
\draw  [fill={rgb, 255:red, 0; green, 0; blue, 0 }  ,fill opacity=1 ] (452.67,291) .. controls (452.67,288.79) and (454.46,287) .. (456.67,287) .. controls (458.88,287) and (460.67,288.79) .. (460.67,291) .. controls (460.67,293.21) and (458.88,295) .. (456.67,295) .. controls (454.46,295) and (452.67,293.21) .. (452.67,291) -- cycle ;
\draw    (456.67,291) -- (456.67,391) ;
\draw    (456.67,191) -- (456.67,291) ;
\draw    (356.67,291) -- (456.67,291) ;
\draw    (387,220) -- (456.67,291) ;
\draw    (456.67,291) -- (527,362.67) ;
\draw    (456.67,291) -- (528.33,222) ;
\draw    (456.67,291) -- (387,362) ;
\draw  [fill={rgb, 255:red, 0; green, 0; blue, 0 }  ,fill opacity=1 ] (452.67,191) .. controls (452.67,188.79) and (454.46,187) .. (456.67,187) .. controls (458.88,187) and (460.67,188.79) .. (460.67,191) .. controls (460.67,193.21) and (458.88,195) .. (456.67,195) .. controls (454.46,195) and (452.67,193.21) .. (452.67,191) -- cycle ;
\draw  [fill={rgb, 255:red, 0; green, 0; blue, 0 }  ,fill opacity=1 ] (383,220) .. controls (383,217.79) and (384.79,216) .. (387,216) .. controls (389.21,216) and (391,217.79) .. (391,220) .. controls (391,222.21) and (389.21,224) .. (387,224) .. controls (384.79,224) and (383,222.21) .. (383,220) -- cycle ;
\draw  [fill={rgb, 255:red, 0; green, 0; blue, 0 }  ,fill opacity=1 ] (352.67,291) .. controls (352.67,288.79) and (354.46,287) .. (356.67,287) .. controls (358.88,287) and (360.67,288.79) .. (360.67,291) .. controls (360.67,293.21) and (358.88,295) .. (356.67,295) .. controls (354.46,295) and (352.67,293.21) .. (352.67,291) -- cycle ;
\draw  [fill={rgb, 255:red, 0; green, 0; blue, 0 }  ,fill opacity=1 ] (383,362) .. controls (383,359.79) and (384.79,358) .. (387,358) .. controls (389.21,358) and (391,359.79) .. (391,362) .. controls (391,364.21) and (389.21,366) .. (387,366) .. controls (384.79,366) and (383,364.21) .. (383,362) -- cycle ;
\draw  [fill={rgb, 255:red, 0; green, 0; blue, 0 }  ,fill opacity=1 ] (452.67,391) .. controls (452.67,388.79) and (454.46,387) .. (456.67,387) .. controls (458.88,387) and (460.67,388.79) .. (460.67,391) .. controls (460.67,393.21) and (458.88,395) .. (456.67,395) .. controls (454.46,395) and (452.67,393.21) .. (452.67,391) -- cycle ;
\draw  [fill={rgb, 255:red, 0; green, 0; blue, 0 }  ,fill opacity=1 ] (523,362.67) .. controls (523,360.46) and (524.79,358.67) .. (527,358.67) .. controls (529.21,358.67) and (531,360.46) .. (531,362.67) .. controls (531,364.88) and (529.21,366.67) .. (527,366.67) .. controls (524.79,366.67) and (523,364.88) .. (523,362.67) -- cycle ;
\draw  [fill={rgb, 255:red, 0; green, 0; blue, 0 }  ,fill opacity=1 ] (552.67,291) .. controls (552.67,288.79) and (554.46,287) .. (556.67,287) .. controls (558.88,287) and (560.67,288.79) .. (560.67,291) .. controls (560.67,293.21) and (558.88,295) .. (556.67,295) .. controls (554.46,295) and (552.67,293.21) .. (552.67,291) -- cycle ;
\draw  [fill={rgb, 255:red, 0; green, 0; blue, 0 }  ,fill opacity=1 ] (524.33,222) .. controls (524.33,219.79) and (526.12,218) .. (528.33,218) .. controls (530.54,218) and (532.33,219.79) .. (532.33,222) .. controls (532.33,224.21) and (530.54,226) .. (528.33,226) .. controls (526.12,226) and (524.33,224.21) .. (524.33,222) -- cycle ;
\draw    (290.71,291) -- (356.67,291) ;
\draw    (387,220) -- (290.71,291) ;
\draw    (290.71,291) -- (387,362) ;
\draw    (290.71,291) -- (456.67,391) ;
\draw    (290.71,291) -- (456.67,191) ;
\draw    (290.71,291) -- (528.33,222) ;
\draw    (290.71,291) -- (527,362.67) ;
\draw    (556.67,291) -- (620.04,291) ;
\draw    (528.33,222) -- (620.04,291) ;
\draw    (620.04,291) -- (527,362.67) ;
\draw    (620.04,291) -- (456.67,391) ;
\draw    (456.67,191) -- (620.04,291) ;
\draw  [fill={rgb, 255:red, 208; green, 2; blue, 27 }  ,fill opacity=1 ] (106.67,291) .. controls (106.67,288.79) and (108.46,287) .. (110.67,287) .. controls (112.88,287) and (114.67,288.79) .. (114.67,291) .. controls (114.67,293.21) and (112.88,295) .. (110.67,295) .. controls (108.46,295) and (106.67,293.21) .. (106.67,291) -- cycle ;
\draw  [fill={rgb, 255:red, 208; green, 2; blue, 27 }  ,fill opacity=1 ] (290.71,291) .. controls (290.71,288.79) and (292.5,287) .. (294.71,287) .. controls (296.92,287) and (298.71,288.79) .. (298.71,291) .. controls (298.71,293.21) and (296.92,295) .. (294.71,295) .. controls (292.5,295) and (290.71,293.21) .. (290.71,291) -- cycle ;
\draw  [fill={rgb, 255:red, 208; green, 2; blue, 27 }  ,fill opacity=1 ] (452.67,291) .. controls (452.67,288.79) and (454.46,287) .. (456.67,287) .. controls (458.88,287) and (460.67,288.79) .. (460.67,291) .. controls (460.67,293.21) and (458.88,295) .. (456.67,295) .. controls (454.46,295) and (452.67,293.21) .. (452.67,291) -- cycle ;
\draw  [draw opacity=0] (294.98,288.6) .. controls (298.27,221.23) and (353.46,166.78) .. (422.47,165.02) .. controls (494.76,163.17) and (554.85,219.86) .. (556.68,291.64) .. controls (556.72,293.17) and (556.73,294.71) .. (556.72,296.23) -- (425.78,294.97) -- cycle ; \draw   (294.98,288.6) .. controls (298.27,221.23) and (353.46,166.78) .. (422.47,165.02) .. controls (494.76,163.17) and (554.85,219.86) .. (556.68,291.64) .. controls (556.72,293.17) and (556.73,294.71) .. (556.72,296.23) ;  
\draw  [draw opacity=0] (619.65,293.83) .. controls (619.44,366.28) and (560.65,424.94) .. (488.15,424.94) .. controls (415.53,424.94) and (356.65,366.07) .. (356.65,293.44) .. controls (356.65,292.79) and (356.66,292.14) .. (356.67,291.49) -- (488.15,293.44) -- cycle ; \draw   (619.65,293.83) .. controls (619.44,366.28) and (560.65,424.94) .. (488.15,424.94) .. controls (415.53,424.94) and (356.65,366.07) .. (356.65,293.44) .. controls (356.65,292.79) and (356.66,292.14) .. (356.67,291.49) ;  
\draw    (387,220) -- (620.04,291) ;
\draw    (387,362) -- (620.04,291) ;
\draw  [fill={rgb, 255:red, 208; green, 2; blue, 27 }  ,fill opacity=1 ] (616.04,291) .. controls (616.04,288.79) and (617.83,287) .. (620.04,287) .. controls (622.25,287) and (624.04,288.79) .. (624.04,291) .. controls (624.04,293.21) and (622.25,295) .. (620.04,295) .. controls (617.83,295) and (616.04,293.21) .. (616.04,291) -- cycle ;

\draw (95.33,139.4) node [anchor=north west][inner sep=0.75pt]    {$\theta _{6,2}$};
\draw (325.33,139.4) node [anchor=north west][inner sep=0.75pt]    {$\theta _{6,2}$};
\draw (536.67,139.4) node [anchor=north west][inner sep=0.75pt]    {$\theta _{6,3} =F_{6}$};
\draw (97.33,441.7) node [anchor=north west][inner sep=0.75pt]    {$W_{8}$};
\draw (435.33,441.73) node [anchor=north west][inner sep=0.75pt]    {$W_{8}( 3)$};

\end{tikzpicture}

    \caption{Examples of $\theta_{6,2},F_6,W_8$ and $W_8(3)$.}
    \label{fig:A}
\end{figure}

The following are two main theorems of this paper.

\begin{thm}\label{thm:main-1}
    Let \( d, s, t \) be integers such that \( d \geq 3t - 5 \). The  rainbow number of \( F_t \) in $W_{d}(s)$ is given by
\[
{\rm rb}(W_{d}(s), F_t) = 
\begin{cases}
\left\lfloor \dfrac{2t - 5}{t - 2} d\right\rfloor + 1, & \text{if } s = 1 \text{ and } t\ge 4, \\[10pt]
\left\lfloor \dfrac{3t-10}{t - 3}d \right\rfloor + 1, & \text{if } s = 2\text{ and } t\ge 6.\\[10pt]
\left\lfloor \dfrac{(s + 1)t - (3s + 4)}{t - 3}d \right\rfloor + 1, & \text{if } s \geq 3\text{ and } t\ge 7.
\end{cases}
\]
\end{thm}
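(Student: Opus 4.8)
The plan is to pass to the equivalent anti-Ramsey quantity $\mathrm{ar}(W_d(s),F_t)=\mathrm{rb}(W_d(s),F_t)-1$, the maximum number of colors in an edge-coloring of $W_d(s)$ with no rainbow $F_t$, and to show it equals the claimed value minus one in each case. The crucial preliminary step is a structural reduction identifying all copies of $F_t$. A boundary vertex has degree $s+2$ and its neighbourhood induces a subgraph of $K_{2,s}$, whose longest path has at most $5$ vertices (at most $3$ when $s=1$, at most $4$ when $s=2$); since the apex of $F_t$ has degree $t-1$ and must be joined to a path on the other $t-1$ vertices, no boundary vertex can be an apex precisely when $t\ge 5$ ($s=1$), $t\ge 6$ ($s=2$), $t\ge 7$ ($s\ge 3$) — exactly the stated thresholds. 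Hence the apex is a hub $h$; as $h$ is adjacent only to boundary vertices, the $t-1$ path vertices are all boundary vertices joined by rim edges, and a path inside $C_d$ is an arc, so it is a window $W$ of $t-1$ consecutive boundary vertices. Thus copies of $F_t$ correspond to pairs $(h,W)$, each consuming the $t-1$ spokes from $h$ to $W$ and the $t-2$ rim edges inside $W$; the degenerate case $t=4,\ s=1$ (where boundary apices do occur) is treated separately.

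For the lower bound I would construct, in each case, a coloring with exactly the claimed value minus one colors and no rainbow $F_t$. Since the number of colors is $|E(W_d(s))|-(\text{merges})=(s+1)d-(\text{merges})$, the aim is to block every fan $(h,W)$ (force a repeated color among its $2t-3$ edges) using as few color identifications as possible. When $s=1$ I would merge adjacent spoke pairs $\{hv_{k(t-2)},hv_{k(t-2)+1}\}$; such a pair lies in exactly $t-2$ consecutive windows, so $\lceil d/(t-2)\rceil$ merges tile all $d$ windows and leave $2d-\lceil d/(t-2)\rceil=\lfloor\frac{2t-5}{t-2}d\rfloor$ colors. When $s\ge 2$ I would instead merge adjacent rim pairs $\{e_{k(t-3)},e_{k(t-3)+1}\}$; a rim pair lies in the fan of \emph{every} hub at once and in $t-3$ consecutive windows, so $\lceil d/(t-3)\rceil$ merges block all $sd$ fans and leave $\lfloor\frac{(s+1)t-(3s+4)}{t-3}d\rfloor$ colors. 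One then checks that these colorings admit no rainbow $F_t$ whatsoever and reconciles the floors with the divisibility of $d$.

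For the upper bound, assume a coloring uses $c$ colors and has no rainbow $F_t$. Choosing one edge per color yields a rainbow representative subgraph $R$ with $|E(R)|=c$ and excess $(s+1)d-c$. Every fan $(h,W)$ being non-rainbow, its edge set $E^h_W$ contains two equally-colored edges, at least one outside $R$. The heart of the argument is the covering inequality: for any color class $C$, the number of fans $(h,W)$ with $|C\cap E^h_W|\ge 2$ is at most $(t-2)(|C|-1)$ when $s=1$ and at most $s(t-3)(|C|-1)$ when $s\ge 2$. I would prove it by assigning to each edge of $C$ the interval of windows containing it (length $t-1$ for a spoke, $t-2$ for a rim edge), bounding the doubly-covered measure by $\sum_i|I_i|-\big|\bigcup_i I_i\big|$, and using that distinct equal-length intervals cannot nest to get $\big|\bigcup_i I_i\big|\ge |C|+t-2$ (with a correction term when $C$ contains rim edges); for $s\ge 2$ one sums over hubs and separates the shared rim contribution. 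Summing this inequality against the $d$ (resp. $sd$) blocked fans gives $\text{excess}\ge d/(t-2)$ (resp. $d/(t-3)$), hence $c\le\lfloor\frac{2t-5}{t-2}d\rfloor$ (resp. the $s\ge 2$ value), matching the construction.

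The main obstacle I anticipate is this covering inequality together with its tightness, as this is exactly where the regimes diverge: a spoke pair catches $t-2$ fans at one hub while a rim pair catches $s(t-3)$ fans across all hubs, and the optimal strategy flips precisely when $s(t-3)\ge t-2$. The separation of $s=2$ from $s\ge 3$, and the sharpness of the thresholds $t\ge 6$ and $t\ge 7$, should trace back to the delicacy of this comparison at small $t$ and to boundary-apex fans reappearing once $t$ drops below the stated bound; confirming the thresholds are sharp will require explicit small-$t$ counterexamples. The remaining work — handling the floors when $d$ is not divisible by $t-2$ or $t-3$, the case $t=4,\ s=1$, and verifying that $d\ge 3t-5$ makes the window tiling non-degenerate — should be routine but tedious.
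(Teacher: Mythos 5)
Your overall architecture coincides with the paper's own proof: your ``apex must be a hub'' reduction is precisely the paper's Lemma~\ref{lem:lucky7}; your merge constructions for the lower bound are exactly the extremal colorings of Theorems~\ref{thm:extremal-1} and~\ref{thm:extremal-2} (including the wrap-around fix when $t-2$, resp.\ $t-3$, does not divide $d$); and your covering inequality plays the role of the paper's counting lemmas (Lemmas~\ref{lem:p2}--\ref{lem:p4+}, packaged as Lemma~\ref{lem:all-used-bounds}). Your uniform bound $p(c)\le (t-2)(|C|-1)$ for $s=1$ and $p(c)\le s(t-3)(|C|-1)$ for $s\ge 2$ is in fact \emph{true} in the relevant parameter ranges (it is implied by the paper's stratified bounds), and granting it, your summation $\sum_C(|C|-1)=|E|-c$ closes the upper bound more cleanly than the paper's two-stage weighted count. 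The numerics all check out.

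However, there is a genuine gap exactly where you anticipated one: your sketched interval-union proof of the covering inequality does not work for $s\ge 2$ when a color class mixes spokes and rim edges. Summing per-hub interval bounds, a hub $u_a$ carrying $x_a\ge 1$ spokes of $C$ together with the $y$ shared rim edges yields only the bound $(t-2)(x_a+y-1)$, and the total over hubs can exceed the claim: for $s=2$, $t=6$, take $C$ to be one spoke at $u_1$ plus $y$ rim edges; the hub-by-hub bounds give $4y$ at $u_1$ and $3(y-1)$ at $u_2$, totalling $7y-3$, which is larger than the claimed $6y$ as soon as $y\ge 4$. So ``summing over hubs'' with the non-nesting argument cannot establish the lemma as stated. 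Two repairs are available. Either sharpen the per-hub union bound to $|\bigcup I|\ge\max\{x_a+t-2,\,y+t-3\}$ and run a case analysis comparing $x_a$ with $y$ across hubs, or do what the paper does: treat $|C|=2$ by an exact pair count (the maximum $s(t-3)$ is attained by an adjacent rim pair), and for $|C|=i\ge 3$ use the incidence double count $2p(c)\le\sum_{e\in C}\#\{\text{fans containing }e\}\le i\cdot s(t-2)$, so that $p(c)\le \frac{i\,s(t-2)}{2}\le s(t-3)(i-1)$, the last inequality being equivalent to $2(t-3)\le i(t-4)$ --- which holds for $i\ge 3$ precisely when $t\ge 6$, so this is also where the threshold on $t$ genuinely enters the counting (and not only through the hub-apex reduction, as your write-up suggests). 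With either repair your proof is complete and matches the theorem.
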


\begin{thm}\label{thm:main-2}
 For integers $d,\ell$ and $t$ such that $d\geq 3t-5 $ and $t\ge \max\{5, \ell+4\}$, we have
    $${\rm rb}(W_d ,\theta_{t,\ell}) = \bigg\lfloor\frac{2t-7}{t-3}d\bigg\rfloor +1$$
    for every $\theta_{t,\ell}\in \Theta_{t,\ell}$.
\end{thm}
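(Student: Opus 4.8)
My plan is to treat the two bounds separately: the lower bound is immediate, and the whole content lies in the upper bound, whose crux is a single ``window-blocking'' estimate that consumes the hypothesis $t\ge \ell+4$.

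\textbf{Lower bound.} Every $\theta_{t,\ell}\in\Theta_{t,\ell}$ contains $C_t$, so a rainbow $\theta_{t,\ell}$ contains a rainbow $C_t$; hence any coloring of $W_d$ avoiding a rainbow $C_t$ also avoids a rainbow $\theta_{t,\ell}$, giving ${\rm rb}(W_d,\theta_{t,\ell})\ge{\rm rb}(W_d,C_t)$. Since $t\ge 5$ and $d\ge 3t-5\ge t-1$, Theorem~\ref{oldtheorem} yields ${\rm rb}(W_d,C_t)\ge\lfloor\frac{2t-7}{t-3}d\rfloor+1$, which is exactly the claimed lower bound.

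\textbf{Set-up for the upper bound.} Put $N=\lfloor\frac{2t-7}{t-3}d\rfloor+1=2d-\lceil\frac{d}{t-3}\rceil+1$. By the usual colour-merging reduction (a copy that is rainbow after identifying two colour classes was already rainbow beforehand, so the worst case uses exactly $N$ colours), I may assume exactly $N$ colours appear; then exactly $D:=2d-N=\lceil\frac{d}{t-3}\rceil-1$ edges are \emph{surplus}, i.e. deletable so as to leave one edge per colour. Label the rim edges $e_0,\dots,e_{d-1}$ and the spokes $s_0,\dots,s_{d-1}$ cyclically. Since $t<d$, every copy of $\theta_{t,\ell}$ passes through the hub and is therefore indexed by a \emph{window} $i$ (an arc of $t-1$ consecutive boundary vertices), comprising the $t-2$ rim edges $e_i,\dots,e_{i+t-3}$, the two end-spokes $s_i,s_{i+t-2}$, and the $\ell$ chord-spokes $s_{i+p}$ with $p$ ranging over a fixed set $P\subseteq\{1,\dots,t-3\}$, $|P|=\ell$, prescribed by $\theta_{t,\ell}$. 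Thus window $i$ produces a rainbow $\theta_{t,\ell}$ iff its $t+\ell$ relevant edges receive distinct colours; a rim edge is relevant to windows at offsets $\mathrm{Rel}_{\mathrm r}=\{0,\dots,t-3\}$ and a spoke at offsets $Q=\{0,t-2\}\cup P$, with $|\mathrm{Rel}_{\mathrm r}|=t-2$ and $|Q|=\ell+2$.

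\textbf{Main estimate.} Assume for contradiction that all $d$ windows fail; each then contains a monochromatic pair among its relevant edges. The key claim, and the one place where $\ell\le t-4$ is used, is that \emph{any monochromatic pair of relevant edges lies in at most $t-3$ common windows}. Indeed, two relevant edges at absolute positions $u,w$ co-occur in exactly $|\{(a,b)\in\mathrm{Rel}_f\times\mathrm{Rel}_g:a-b=u-w\}|$ windows; a rim--rim pair contributes at most $(t-2)-1=t-3$, whereas for spoke--spoke and rim--spoke pairs the count is at most $|Q|-1=\ell+1\le t-3$, the crucial point being that $0,t-2\in Q$ forces the two extreme offsets never to align at once. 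I then charge each failing window to a surplus edge of a witnessing pair, choosing for a colour of multiplicity $m$ the positionally consecutive occurrences and splitting the cyclic order at a gap exceeding $t-3$, so that each of the $D$ surplus edges absorbs at most $t-3$ windows. Hence the number of failing windows is at most $(t-3)D\le(t-3)\big(\lceil\tfrac{d}{t-3}\rceil-1\big)\le d-1<d$, a contradiction. Specialising to $\ell=0$ recovers ${\rm rb}(W_d,C_t)=\lfloor\frac{2t-7}{t-3}d\rfloor+1$, closing the gap in Theorem~\ref{oldtheorem}.

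\textbf{Main obstacle.} The delicate step is the $t-3$ cap, established uniformly over all chord patterns $P$ and both window orientations: for spoke--spoke collisions the extremal configuration is an arithmetically spaced $Q$, and it is exactly a missing interior offset (guaranteed by $\ell\le t-4$) that holds the cap at $t-3$ rather than $t-2$; at $\ell=t-3$ this breaks and the threshold jumps, matching the fan value in Theorem~\ref{thm:main-1}. Secondary care is needed in the charging when colour multiplicities exceed two (the consecutive-occurrence/largest-gap device above) and in invoking $d\ge 3t-5$ to ensure the $d$ windows are genuine, non-wrapping copies, so that the offset counts and the closing arithmetic are exact.
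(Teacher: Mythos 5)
Your lower bound (monotonicity ${\rm rb}(W_d,\theta_{t,\ell})\ge{\rm rb}(W_d,C_t)$ combined with the cited bound in Theorem~\ref{oldtheorem}) is correct, and so is your key pairwise estimate: two distinct edges are jointly relevant to at most $t-3$ windows, where for pairs involving a spoke the count is at most $|Q|-1=\ell+1\le t-3$ precisely because $0,t-2\in Q$ and $\ell\le t-4$; this matches the paper's Lemma~\ref{lem:p2} with $s=1$, and fixing one orientation of the chord pattern is a legitimate simplification of the paper's symmetric/asymmetric bookkeeping. The closing arithmetic $(t-3)D\le d-1<d$ is also fine. The genuine gap is the charging step for colours of multiplicity $m\ge 3$ --- the only step that goes beyond the pairwise bound, and exactly the step the paper spends Lemmas~\ref{lem:p3} and~\ref{lem:p4+} on. Your scheme needs every failing window to contain a witnessing pair that is \emph{positionally consecutive} in its colour class, and this is false once the class contains spokes, because a spoke's relevance set $w-Q$ is not an interval of windows (writing $R(f)$ for the set of windows to which $f$ is relevant). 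Concretely, take $t=7$, $Q=\{0,1,2,4,5\}$ and the colour class $\{e_u,\,s_{u+1},\,e_{u+2}\}$: the window $u-2$ contains $e_u$ (offset $2$) and $e_{u+2}$ (offset $4$) but not $s_{u+1}$ (offset $3\notin Q$), so its only witnessing pair is non-consecutive. Nor can you repair this by charging such windows to the later edge of whatever pair they do contain: for the class $\{e_{p-1},\,s_p,\,e_p\}$ one has $R(e_p)\subseteq R(e_{p-1})\cup R(s_p)$, since $[p-t+3,\,p-1]\subseteq R(e_{p-1})$ and $p\in R(s_p)$ (as $0\in Q$); hence all $t-2>t-3$ windows relevant to $e_p$ fail and contain $e_p$ together with an earlier edge, so the ``latest edge'' rule overloads the surplus edge $e_p$. (Separately, a cyclic gap exceeding $t-3$ need not exist when $m\ge d/(t-3)$, so the splitting device is not always available; this degenerate case also needs words.)

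The statement you actually need --- a colour of multiplicity $m$ spoils at most $(m-1)(t-3)$ windows --- is true, but it requires an argument independent of consecutiveness. For $m\ge 3$ one can double count: the spoiled windows number at most $\frac{1}{2}\sum_f|R(f)|\le\frac{m(t-2)}{2}$, and $\frac{m(t-2)}{2}\le(m-1)(t-3)$ holds whenever $t\ge 6$, or $t=5$ and $m\ge 4$; the single remaining case $t=5$, $m=3$ needs a short separate check using $|R(f)|\le 3$ and the shapes of $Q$. This is in substance what the paper does: its Lemma~\ref{lem:p3} (multiplicity $3$, bound $t-1\le 2(t-3)$) and Lemma~\ref{lem:p4+} (multiplicity $i\ge4$, bound $(t-2)i/2$) are exactly the replacements for your charging device, and they are then fed into the weighted count over colour classes in Theorem~\ref{thm:Ws-F-and-Wd-theta}. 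With those per-colour bounds supplied, your surplus-edge formulation (merge to exactly $N$ colours, $D=\lceil d/(t-3)\rceil-1$ surplus edges, at most $(t-3)D<d$ failing windows) gives a valid and arguably cleaner finish than the paper's; without them, the proof as written does not go through.
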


The condition \(t \geq 4\) stated in the first assertion of Theorem~\ref{thm:main-1} is indeed necessary. This necessity stems from the fact that \(F_3 = C_3\). As demonstrated by Qin, Lei, and Li in their work~\cite{QIN2020124888}, it has been proved that \(\mathrm{rb}(W_{d}, C_3) = d + 2\). 
The conditions \(t \geq 6\) and \(t \geq 7\) stated in the second and third assertions of Theorem~\ref{thm:main-1} are also essential, as we will show
\begin{align*}
    {\rm rb}(W_{d}(2), F_5)&\leq \left\lfloor \frac{33}{14}d \right\rfloor+1< \left\lfloor \frac{5}{2}d \right\rfloor+1 \\
    {\rm rb}(W_{d}(3), F_6)&\leq \left\lfloor \frac{55}{16}d \right\rfloor+1< \left\lfloor \frac{11}{3}d \right\rfloor+1 
\end{align*}
by Theorems \ref{thm5} in Section \ref{sec:final}.

Combining Theorem \ref{thm:main-1} (with parameters $s=1$ and $t=4$) and Theorem \ref{thm:main-2} (with $t=5$ and $\ell=1$), we obtain the exact rainbow numbers:
$$
\mathrm{rb}(W_{d}, C^+_4) = \mathrm{rb}(W_{d}, C^+_5) = \left\lfloor \frac{3d}{2} \right\rfloor + 1,
$$
which coincides with the recent result by Jakhar, Budden, and Moun \cite{jakhar2025rainbow}, who further proposed extending the study to  large cycles and large chorded cycles in wheel graphs. Interestingly, by specializing Theorem \ref{thm:main-2} to the cases $\ell=0$ and $\ell=1$, we derive the following.

\begin{cor}\label{thm:main-3}
    For integers $d$ and $t$ such that $d\geq 3t-5 $ and $t\ge 5$, we have
    $${\rm rb}(W_d ,C_t) = {\rm rb}(W_d ,C^+_t)= \bigg\lfloor\frac{2t-7}{t-3}d\bigg\rfloor +1.$$
\end{cor}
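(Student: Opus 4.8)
The plan is to obtain the corollary as a direct specialization of Theorem~\ref{thm:main-2}, which already determines $\mathrm{rb}(W_d,\theta_{t,\ell})$ uniformly over the entire class $\Theta_{t,\ell}$ whenever $d \ge 3t-5$ and $t \ge \max\{5,\ell+4\}$. The two graphs named in the statement, $C_t$ and $C_t^+$, correspond to the parameter choices $\ell = 0$ and $\ell = 1$, so the task reduces to matching the definitions and confirming that the hypotheses transfer.

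First I would dispose of the cycle. By the convention fixed in the introduction, $\theta_{t,0}$ has no chords and hence is precisely $C_t$, so $\Theta_{t,0} = \{C_t\}$. Setting $\ell = 0$ in Theorem~\ref{thm:main-2}, the hypothesis $t \ge \max\{5,\ell+4\} = 5$ is exactly the assumption $t \ge 5$ of the corollary, giving $\mathrm{rb}(W_d,C_t) = \lfloor \frac{2t-7}{t-3} d \rfloor + 1$. For the chorded cycle I would make explicit the one identification that needs care: a member of $\Theta_{t,1}$ is a $t$-cycle with a single chord, and since only one chord is present the requirement that ``all chords emanate from a common vertex'' is vacuously met; thus $\Theta_{t,1}$ is exactly the family of one-chord $t$-cycles, which is what $C_t^+$ denotes. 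The phrase ``for every $\theta_{t,\ell} \in \Theta_{t,\ell}$'' in Theorem~\ref{thm:main-2} is precisely what makes $\mathrm{rb}(W_d, C_t^+)$ well-defined in spite of the freedom in chord placement, and setting $\ell = 1$ (so that $t \ge \max\{5,5\} = 5$) yields the same value $\lfloor \frac{2t-7}{t-3} d \rfloor + 1$.

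Since the right-hand side of Theorem~\ref{thm:main-2} does not involve $\ell$, the two computed quantities coincide, which establishes the displayed chain of equalities. There is no substantive obstacle here beyond bookkeeping; the only point I would take care to spell out is the identification $\Theta_{t,1} = \{C_t^+\}$ together with the invariance of the rainbow number across this class, which is what legitimizes writing $\mathrm{rb}(W_d, C_t^+)$ as a single number rather than a quantity depending on where the chord sits.
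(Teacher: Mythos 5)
Your proposal is correct and matches the paper's own derivation, which likewise obtains the corollary by specializing Theorem~\ref{thm:main-2} to $\ell=0$ and $\ell=1$ (noting $\theta_{t,0}=C_t$, $\Theta_{t,1}$ is the family of one-chord $t$-cycles, and $\max\{5,\ell+4\}=5$ in both cases). Your extra care in justifying that $\mathrm{rb}(W_d,C_t^+)$ is well-defined via the ``for every $\theta_{t,\ell}\in\Theta_{t,\ell}$'' quantifier is exactly the bookkeeping the paper leaves implicit.
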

\noindent 
This result establishes that the lower bound stated in Theorem \ref{oldtheorem} of Lan, Shi, and Song \cite{LAN20193216} for the rainbow number of $C_t$ ($t \geq 5$) in $W_d$ is in fact exact, coinciding precisely with the value derived in Corollary \ref{thm:main-3}. The conditions $t \geq 5$ in Theorem \ref{thm:main-2} and Corollary \ref{thm:main-3} are necessary. Note that $\theta_{4,1} = F_4$ and $\theta_{4,0} = C_4$. Theorem \ref{thm:main-1} implies  
${\rm rb}(W_d, F_4) = \lfloor \frac{3d}{2} \rfloor + 1$, and Hor\v{n}\'{a}k, Jendrol', Schiermeyer, and Sot\'{a}k \cite{10.1002/jgt.21803} proved that ${\rm rb}(W_d, C_4) = \lfloor \frac{4d}{3} \rfloor + 1$.

This paper is structured as follows. Section \ref{sec:countinglemma} establishes a counting argument for the number of $\theta_{t,\ell}$-subgraphs that cover specified edges in the $s$-hubbed wheel graph $W_d(s)$. This enumeration serves as a foundational tool for subsequent proofs. In Section \ref{sec:upperbound}, we derive upper bounds for both $\mathrm{rb}(W_d(s), F_t)$ and $\mathrm{rb}(W_d, \theta_{t,\ell})$, while corresponding lower bounds are presented in Section \ref{sec:lowerbound}. Finally, Section \ref{sec:final} summarizes our key findings and discusses potential directions for future research in this domain.

\section{Counting lemma} \label{sec:countinglemma}


The structure of a graph $\theta_{t,\ell}$ is characterized as follows. There exists a $t$-cycle $C_t$, with its vertices labeled $v_1,v_2,\cdots,v_t$ in order. Additionally, there are $\ell$ chords selected from the set $\{v_1v_i\mid 3\leq i\leq t - 1\}$. We call this $C_t$ be the \textit{boundary cycle} of $\theta_{t,\ell}$.  For convenience, we represent $\theta_{t,\ell}$ by a vector $\mathbf{X}(\theta_{t,\ell}):=(2,i_1,i_2,\ldots,i_\ell,t)$ with 
$2<i_1< i_2< \cdots <i_\ell<t$, where $v_1v_{i_j}$ is an edge for each $j\in [\ell]$. If $\ell=0$ or $i_{j}+i_{\ell+1-j}=t +2$ for every $j\leq (\ell+1)/2$, then we call that $\theta_{t,\ell}$ or $\mathbf{X}(\theta_{t,\ell})$ is \textit{symmetric}. For example, the first subfigure in Figure~\ref{fig:A} depicts an asymmetric $\theta_{6,2}$, while the second subfigure shows an symmetric variant.

\begin{lemma} \label{lem:p2}
Let $d,s,t,\ell$ be nonnegative integers such that $d\geq 2t-3$ and $t\geq \max\{4,\ell+3\}$.
For any pair of edges in the graph $W_d (s)$, there exist at most $N_{\ref{lem:p2}}$ copies of the graph $\theta_{t,\ell}$ that contain both of these edges and exactly one hub vertex of $W_d(s)$, where
\[
N_{\ref{lem:p2}}=
\begin{cases}
\max\{s(t-3),t-2\}, &\text{if } \theta_{t,\ell} = F_t,\\
s(t - 3), &\text{if } \theta_{t,\ell} \not= F_t \text{ and } \mathbf{X}(\theta_{t,\ell}) \text{ is symmetric},\\
2s(t - 3), &\text{if } \mathbf{X}(\theta_{t,\ell}) \text{ is asymmetric.}
\end{cases}
\]
\end{lemma}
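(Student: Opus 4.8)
The plan is to first determine the shape of every copy of $\theta_{t,\ell}$ in $W_d(s)$ that uses exactly one hub, and then to count such copies through a prescribed pair of edges by a window-placement argument. Fix a copy $Q$ with unique hub $h$. Every edge of $Q$ incident to $h$ is a spoke and every other edge is a rim edge, so the $t-1$ non-hub vertices of $Q$ are boundary vertices whose induced rim edges lie on $C_d$ and therefore form subpaths of $C_d$. Since a boundary vertex has only two rim-neighbours, any vertex of $\theta_{t,\ell}$ incident to at least three boundary--boundary edges must be sent to $h$; applying this to the apex $v_1$ (of degree $\ell+2$) forces $h=v_1$ when $\ell\ge 2$, leaves the single alternative $h=v_{i_1}$ when $\ell=1$, and imposes no constraint when $\ell=0$. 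In every case $Q$ acquires the same \emph{windowed} form: $h$, a window $W$ of $t-1$ consecutive boundary vertices, the $t-2$ rim edges inside $W$, and $\ell+2$ spokes from $h$ to the two ends of $W$ together with $\ell$ interior vertices in the positions dictated by $\mathbf{X}(\theta_{t,\ell})$. The hypothesis $d\ge 2t-3$ guarantees that such windows are genuine paths.

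Next I would count by triples $(h,W,S)$, where $S\subseteq W$ records the spoke-targets; distinct triples give distinct subgraphs, so no overcounting occurs. For a fixed pair $(h,W)$, the patterns $S$ producing a subgraph isomorphic to $\theta_{t,\ell}$ are exactly $\mathbf{X}(\theta_{t,\ell})$ and its reflection in $W$, and these coincide if and only if $\mathbf{X}(\theta_{t,\ell})$ is symmetric; a short check shows the alternative role $h=v_{i_1}$ available when $\ell=1$ yields no further pattern. Hence each pair $(h,W)$ contributes at most one copy when $\theta_{t,\ell}$ is symmetric and at most two when it is asymmetric.

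The final step is to intersect this with the requirement that $Q$ contain the two prescribed edges $e,f$, split into cases by their types. If $e$ and $f$ are both rim edges, they must be cycle edges inside $W$; the number of windows of $t-1$ consecutive vertices containing two fixed rim edges is largest, namely $t-3$, when the two edges are adjacent, and every one of the $s$ hubs is admissible, giving at most $s(t-3)$ copies in the symmetric case and $2s(t-3)$ in the asymmetric case. If at least one of $e,f$ is a spoke, the hub is forced to be its endpoint and the factor $s$ disappears; I would show that the count is then dominated by the rim/rim bound, with the sole exception of $F_t$, for which every window position is a spoke-target, so two adjacent spokes at a common hub lie in up to $t-2$ windows and contribute the extra term $t-2$. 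Taking the maximum over the three edge-type cases yields the three branches of $N_{\ref{lem:p2}}$.

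The structural reduction to the windowed form and the symmetric/asymmetric multiplicity count are the conceptual heart, and I expect them to be clean. The main obstacle is the exhaustive edge-type bookkeeping of the last step: one must verify that the spoke/spoke and spoke/rim configurations never beat the rim/rim bound $s(t-3)$ (respectively $2s(t-3)$) for all admissible $s$ and $t$ --- the tight point being $s=1$, where $s(t-3)=t-3$ leaves little room --- and that $F_t$ is the unique graph whose spoke/spoke count $t-2$ can exceed $s(t-3)$, precisely accounting for the $\max$ appearing in the first branch.
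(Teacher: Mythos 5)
Your overall strategy --- reduce every one-hub copy of $\theta_{t,\ell}$ to a triple (hub, window of $t-1$ consecutive boundary vertices, spoke-pattern equal to $\mathbf{X}(\theta_{t,\ell})$ or its reflection), then count admissible windows through the prescribed edges --- is essentially the paper's own argument, and your rim/rim case is complete and correct. The genuine gap is in the spoke/spoke and spoke/rim cases, which you explicitly defer (``I would show that the count is then dominated by the rim/rim bound''). This deferral hides the only nontrivial step of the lemma. For two adjacent spokes at a common hub, the number of windows containing both spoke-endpoints is $t-2$, so your per-window multiplicity bound (one copy if symmetric, two if asymmetric) yields only $q\le t-2$, resp.\ $q\le 2(t-2)$; at the tight point $s=1$ both strictly exceed the claimed $s(t-3)$, resp.\ $2s(t-3)$. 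The same overshoot occurs for a spoke and a rim edge sharing an endpoint. So the asserted domination is simply false at the level of raw window counts, and no amount of ``bookkeeping'' of edge types will recover it.

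What closes this in the paper is a completeness (pigeonhole) argument that your proposal never states: a given window contributes a copy only if the positions of the prescribed spoke-endpoints inside that window are components of $\mathbf{X}(\theta_{t,\ell})$ or of its reflection; and if such a placement exists for \emph{every} one of the $t-2$ windows, then as the window slides these required positions sweep out all of $\{2,\dots,t\}$ (spoke/spoke case) or all of $\{3,\dots,t-1\}$ (spoke/rim case), forcing $\mathbf{X}(\theta_{t,\ell})$ to be the full vector, i.e.\ $\theta_{t,\ell}=F_t$. Hence for $\theta_{t,\ell}\neq F_t$ at least one window admits no placement in the symmetric case, and at least two of the potential placements fail in the asymmetric case, which is exactly what produces the constants $t-3$ and $2(t-3)$ and what isolates $F_t$ as the unique graph responsible for the term $t-2$ in $\max\{s(t-3),t-2\}$. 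You already have the ingredient needed to run this argument (your requirement that the spoke-targets match the pattern or its reflection), but without carrying it out the proof does not establish the stated bounds.
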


\begin{proof}
Let $e_1$ and $e_2$ be two fixed edges in the graph $W_d(s)$. We define $q$ as the quantity representing the number of copies of the graph $\theta_{t,\ell}$ that simultaneously contain both $e_1$ and $e_2$, and exactly one hub vertex of $W_d(s)$. 
The distribution of $e_1$ and $e_2$ must fall into exactly one of the following mutually exclusive cases
\begin{enumerate}[label={Case A\arabic*}]
\item \label{caseA1} $e_1\in E_{u_a}$ and $e_2\in E_{u_b}$ for some $a,b\in [s]$.\\
If $a\neq b$, then it is clear $q=0$. Thus, we assume $a= b$.
Let $e_1=u_av_i$ and $e_2=u_av_j$ ($i<j$). 
If \( j - i >t-2 \) and \( (i - j) \bmod d > t-2 \), then $q=0$. 
Since $d\geq 2t-3$, it is impossible two inequalities \( j - i \leq t-2 \) and \( (i - j) \bmod d \leq t-2 \) to hold simultaneously; thus, without loss of generality, we assume $j-i\le t-2$.
Now, there are exactly $t-(j-i)-1$ copies of $F_t$ containing both $e_1$ and $e_2$; they are
$F_t^1=[u_a;v_i\cdots v_{i+t-2}]$,
$F_t^2=[u_a;v_{i-1}\cdots v_{i+t-3}]$,
$\ldots$, and $F_t^{t-(j-i)-1}=[u_a;v_{j+2-t} \ldots v_j]$ (note that $t-(j-i)-1\geq t-(t-2)-1=1$). 
If a negative number occurs as a subscript, replace it with the sum of this negative number and $t$. The same applies to subsequent similar cases.

If $\mathbf{X}(\theta_{t,\ell})$ is symmetric or asymmetric, then each $F_t^h$ contains one or two copies of $\theta_{t,\ell}$, respectively. 
If $F_t^h$ contains a copy of $\theta_{t,\ell}$ that covers $e_1$ and $e_2$, then 
\begin{enumerate}[label={(\roman*)}]
    \item \label{case1.1} $h+1$ and $\big(h+1+j-i\big) \bmod t$ are the components of vector $\mathbf{X}(\theta_{t,\ell})$, or
    \item \label{case1.2} $t-h+1-(j-i)$ and $(t-h+1) \bmod t$ are the components of vector $\mathbf{X}(\theta_{t,\ell})$.
\end{enumerate}
Therefore, if \ref{case1.1} holds for every $F_t^h$, then $2,3,\ldots,t$ are 
the components of vector $\mathbf{X}(\theta_{t,\ell})$, and thus 
$\theta_{t,\ell}=F_t$, which is symmetric. Similarly, if \ref{case1.2} holds for every $F_t^h$, then we come to the same conclusion. 
Thus, if $\theta_{t,\ell}$ is asymmetric, then for at least one value of $h$, \ref{case1.1} does not hold, and for at least one distinct value of $h$, \ref{case1.2} does not hold.
This implies that when $ \mathbf{X}(\theta_{t,\ell})$ is asymmetric, there are at least two values of $h$ such that $F_t^h$ does not contains two copies of $\theta_{t,\ell}$ that covers $e_1$ and $e_2$.
Therefore, 
\[
q\leq
\begin{cases}
t - (j-i)-1\leq t - 2, &\text{if } \mathbf{X}(\theta_{t,\ell}) \text{ is symmetric},\\
2(t - (j-i)-1)-2\leq 2(t - 3), &\text{if } \mathbf{X}(\theta_{t,\ell}) \text{ is asymmetric.}
\end{cases}
\]

\item $e_1\not\in E_{u_a}$ and $e_2\not\in E_{u_b}$ for every $a,b\in [s]$.\\
Let $e_1=v_iv_{i+1}$ and $e_2=v_jv_{j+1}$ (indices modulo $d$), where $i<j$. 
If \( j - i \ge t-2 \) and \( (i - j) \bmod d \ge t-2 \), then $q=0$. 
Since $d\geq 2t-3$, it is impossible two inequalities \( j - i < t-2 \) and \( (i - j) \bmod d < t-2 \) to hold simultaneously; thus, without loss of generality, we assume $j-i< t-2$.
Now, there are exactly $s(t-(j-i)-2)$ copies of $F_t$ containing both $e_1$ and $e_2$. If $\mathbf{X}(\theta_{t,\ell})$ is symmetric or asymmetric, then each $F_t^h$ contains one or two copies of $\theta_{t,\ell}$, respectively.
Therefore, 
\[
q\leq
\begin{cases}
s(t-(j-i)-2)\leq s(t - 3), &\text{if } \mathbf{X}(\theta_{t,\ell}) \text{ is symmetric},\\
2s(t-(j-i)-2)\leq 2s(t - 3), &\text{if } \mathbf{X}(\theta_{t,\ell}) \text{ is asymmetric.}
\end{cases}
\]

\item \label{caseA3} $e_1\in E_{u_a}$ for some $a\in [s]$ and $e_2\not\in E_{u_b}$ for every $b\in [s]$.\\
Let $e_1=u_av_i$ for some $a,i$ and $e_2=v_jv_{j+1}$  (indices modulo $d$) for some $j$. Note that $i=j$ is possible. 
If \( j - i \ge t-2 \) and \( (i - j) \bmod d \ge t-2 \), then $q=0$. 
Since $d\geq 2t-3$, it is impossible two inequalities \( j - i < t-2 \) and \( (i - j) \bmod d < t-2 \) to hold simultaneously; thus, without loss of generality, we assume $j-i< t-2$.
Now, there are exactly $m:=t-(j-i)-2$ copies of $F_t$ containing both $e_1$ and $e_2$. If $\mathbf{X}(\theta_{t,\ell})$ is symmetric or asymmetric, then each $F_t^h$ contains one or two copies of $\theta_{t,\ell}$, respectively.
Therefore, if $j\neq i$, then
\[
q\leq
\begin{cases}
t-(j-i)-2\leq t - 3, &\text{if } \mathbf{X}(\theta_{t,\ell}) \text{ is symmetric},\\
2(t-(j-i)-2)\leq 2(t - 3), &\text{if } \mathbf{X}(\theta_{t,\ell}) \text{ is asymmetric.}
\end{cases}
\]

If $j=i$, then $m=t-2$. 
Let $F_t^1=[u_a;v_i\cdots v_{i+t-2}]$,
$F_t^2=[u_a;v_{i-1}\cdots v_{i+t-3}]$,
$\ldots$, and $F_t^{m}=[u_a;v_{i+1-h} \ldots v_{i+1}]$ be the $m$ copies of $F_t$ containing both $e_1$ and $e_2$.
If $F_t^h$ contains a copy of $\theta_{t,\ell}$ that covers $e_1$ and $e_2$, then 
\begin{enumerate}[label={(\roman*)}]
    \item \label{case4.1} $h+1$ is a component of vector $\mathbf{X}(\theta_{t,\ell})$, or
    \item \label{case4.2} $t-h+1$ is a component of vector $\mathbf{X}(\theta_{t,\ell})$.
\end{enumerate}
Therefore, if \ref{case4.1}  or \ref{case4.2} holds for every $F_t^h$, then $3,\ldots,t-1$ are the components of vector $\mathbf{X}(\theta_{t,\ell})$, implying $\theta_{t,\ell}=F_t$. 
Thus, if $\theta_{t,\ell} \not= F_t$ and $\theta_{t,\ell}$ is symmetric, then $h+1$ being a component of vector $\mathbf{X}(\theta_{t,\ell})$ implies $t-h+1$ being a component of vector $\mathbf{X}(\theta_{t,\ell})$ and vice versa, i.e, \ref{case4.1} and \ref{case4.2} are equivalent. Consequently, there is at least one value of $h$ such that $F_t^h$ does not contains two copies of $\theta_{t,\ell}$ that covers $e_1$ and $e_2$ in such a situation. 
If $\theta_{t,\ell}$ is asymmetric, then for at least one value of $h$, \ref{case4.1} does not hold, and for at least one distinct value of $h$, \ref{case4.2} does not hold.
This implies that when $ \mathbf{X}(\theta_{t,\ell})$ is asymmetric, there are at least two values of $h$ such that $F_t^h$ does not contains two copies of $\theta_{t,\ell}$ that covers $e_1$ and $e_2$.
Therefore, 
\[
q\leq
\begin{cases}
t-2, &\text{if } \theta_{t,\ell} = F_t,\\
(t-2)-1= t - 3, &\text{if } \theta_{t,\ell} \not= F_t \text{ and } \mathbf{X}(\theta_{t,\ell}) \text{ is symmetric},\\
2(t-2)-2= 2(t - 3), &\text{if } \mathbf{X}(\theta_{t,\ell}) \text{ is asymmetric.}
\end{cases}
\]
\end{enumerate}

This completes the proof.
\end{proof}

\begin{lemma} \label{lem:p3}
Let $d,s,t,\ell$ be nonnegative integers such that $d\geq 3t-5$ and $t\geq \max\{4,\ell+3\}$ .
For any three edges in the graph $W_d(s)$, there exist at most $N_{\ref{lem:p3}}$ copies of the graph $\theta_{t,\ell}$ that contain at least two of these edges and exactly one hub vertex of $W_d (s)$, where
\[
N_{\ref{lem:p3}}=
\begin{cases}
\max\{s(t - 2),t-1\},  &\text{if } \mathbf{X}(\theta_{t,\ell}) \text{ is symmetric,}\\
\max\{2s(t-2),2(t-1)\}, &\text{if } \mathbf{X}(\theta_{t,\ell}) \text{ is asymmetric.}
\end{cases}
\]
\end{lemma}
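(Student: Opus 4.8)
The plan is to reduce everything to the pairwise count already established in Lemma~\ref{lem:p2}, organized through a single inclusion--exclusion identity. Fix three edges $e_1,e_2,e_3$ of $W_d(s)$ and, for $\{i,j\}\subseteq\{1,2,3\}$, let $C_{ij}$ denote the set of copies of $\theta_{t,\ell}$ that use exactly one hub vertex and contain both $e_i$ and $e_j$, and let $C_{123}$ be the copies containing all three. Every copy meeting the hypothesis lies in $C_{12}\cup C_{13}\cup C_{23}$, and since $C_{12}\cap C_{13}=C_{12}\cap C_{23}=C_{13}\cap C_{23}=C_{123}$, inclusion--exclusion gives
\[
|C_{12}\cup C_{13}\cup C_{23}| \;=\; |C_{12}|+|C_{13}|+|C_{23}|-2\,|C_{123}|.
\]
Lemma~\ref{lem:p2} bounds each $|C_{ij}|$ by $N_{\ref{lem:p2}}$, but the naive sum $3N_{\ref{lem:p2}}$ is far too large; the whole point is that in each configuration either some pairs are empty or the triple overlap $|C_{123}|$ is itself large enough to absorb the surplus, and I would prove the bound by controlling the three $|C_{ij}|$ together with $|C_{123}|$.

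The configurations are indexed, exactly as in the proof of Lemma~\ref{lem:p2}, by how many of $e_1,e_2,e_3$ are spokes, at which hub vertices those spokes sit, and by the cyclic positions of the rim edges; in each one the copies are enumerated through the consecutive-fan description $F_t^1,F_t^2,\dots$ of Lemma~\ref{lem:p2}, where each host fan carries one copy of $\theta_{t,\ell}$ when $\mathbf{X}(\theta_{t,\ell})$ is symmetric and at most two when it is asymmetric, so that the asymmetric bound is exactly twice the symmetric one and both follow from the same count. The easy configurations are those in which two edges are spokes at distinct hubs: the corresponding pair is then empty, the two surviving spoke--rim pairs live at different hubs and hence index disjoint families, and since distinct hubs force $s\ge 2$ the resulting $2(t-2)$-type total falls under $s(t-2)$. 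Spread-out triples are handled the same way, with one or more pairwise counts vanishing because the edges are too far apart on the rim to share a fan.

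The delicate configurations are the \emph{clustered} ones, in which two or all three pairwise counts are simultaneously large. Two extremal families drive the two terms of the maximum: three (nearly) consecutive rim edges, for which the hub is free and a direct evaluation of the $|C_{ij}|$ and $|C_{123}|$ collapses the inclusion--exclusion sum to exactly $s(t-2)$; and a spoke together with two rim edges concentrated at a single boundary vertex (equivalently, any triple all of whose $\ge 2$-edge copies are pinned to one hub, as happens automatically when $s=1$), where the same computation yields exactly $t-1$. In each clustered case I would verify that $|C_{123}|$ grows in lockstep with the pairwise counts, so that $|C_{12}|+|C_{13}|+|C_{23}|-2\,|C_{123}|$ never exceeds $\max\{s(t-2),\,t-1\}$.

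The main obstacle is precisely this last step: pinning down $|C_{123}|$ in the clustered configurations so that the positive surplus produced by a spoke adjacent to tightly packed rim edges is cancelled down to a bounded constant rather than merely estimated. The key structural fact making the cancellation exact is that an arc containing the two \emph{outermost} features of a cluster already contains all the intermediate ones; this forces inclusions such as $C_{13}\subseteq C_{123}$, which is what turns the raw inequalities into equalities at the extremal triples. A secondary technical point is the role of the hypothesis $d\ge 3t-5$: it guarantees that among the three cyclic gaps separating the edges at most one can be short, so that no copy arises ``the long way around'' and no pair is double counted, this being the three-edge analogue of the $d\ge 2t-3$ hypothesis used in Lemma~\ref{lem:p2}.
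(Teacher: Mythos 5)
Your proposal is correct and is essentially the paper's own argument: the same case split by spoke/rim distribution inherited from Lemma~\ref{lem:p2}, and the same pivotal fact that $d\geq 3t-5$ forces one of the three cyclic gaps to be long, so that any copy containing the two outermost edges must also contain the middle one (your inclusion $C_{13}\subseteq C_{123}$); the paper merely finishes more directly, bounding the whole union $C_{12}\cup C_{23}$ by the number of fans through the \emph{middle} edge (at most $t-1$ for a spoke, $s(t-2)$ for a rim edge) instead of evaluating all four inclusion--exclusion terms, though your exact-cancellation computation gives the same numbers. One slip in your closing remark should be fixed: the hypothesis guarantees at least one \emph{long} gap (equivalently, at most \emph{two} short ones), not ``at most one short'' --- indeed your own extremal clusters have two short gaps --- but nothing in your main argument relies on the erroneous form, only on its correct consequence that no copy can arise ``the long way around.''
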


\begin{proof}
Let $e_1$, $e_2$ and $e_3$ be three fixed edges in the graph $W_d(s)$. We define $q$ as the quantity representing the number of copies of the graph $\theta_{t,\ell}$ that  contain at least two of these three edges and exactly one hub vertex of $W_d (s)$.
The distribution of these three edges must fall into exactly one of the following mutually exclusive cases:
\begin{enumerate}[label={Case B\arabic*}]
\item $e_1\in E_{u_a}$, $e_2\in E_{u_b}$, and $e_3\in E_{u_c}$ for some $a,b,c\in [s]$.\\
If $a,b,c$ are pairwise distinct, then it is clear $q=0$. Thus, we assume $a=b$. If $c\neq a$, then
$q$ is the number of copies of the graph $\theta_{t,\ell}$ that  contain both $e_1$ and $e_2$, and exactly one hub vertex of $W_d(s)$. By the same analysis as in \ref{caseA1} of Lemma \ref{lem:p2}, we conclude
\[
q\leq
\begin{cases}
t - 2, &\text{if } \mathbf{X}(\theta_{t,\ell}) \text{ is symmetric},\\
2(t - 3), &\text{if } \mathbf{X}(\theta_{t,\ell}) \text{ is asymmetric.}
\end{cases}
\]
Thus, we further assume $c=a$. Let $e_1=u_av_i$, $e_2=u_av_j$, and $e_3=u_av_k$ ($i<j<k$). 

Since  \( d \ge 3t-5> 3(t-2) \), it is impossible all three inequalities \( j - i \leq t-2 \), \( k - j \leq t-2 \), and \( (i - k) \bmod d \leq t-2 \) to hold simultaneously; thus, without loss of generality, we assume \( (i - k) \bmod d > t-2 \). This implies that if $W_{d + 1}(s)$ contains a copy of the graph $F_t$ that  covers both $e_1$ and $e_3$, then it must cover $e_2$.
Thus, there are at most $(t+j-2)-j+1=t-1$ copies of $F_t$ containing both $e_1$ and $e_2$ or both $e_2$ and $e_3$. If $\mathbf{X}(\theta_{t,\ell})$ is symmetric or asymmetric, then each copy of $F_t$ contains one or two copies of $\theta_{t,\ell}$, respectively. Therefore, 
\[
q\leq
\begin{cases}
t - 1, &\text{if } \mathbf{X}(\theta_{t,\ell}) \text{ is symmetric},\\
2(t - 1), &\text{if } \mathbf{X}(\theta_{t,\ell}) \text{ is asymmetric.}
\end{cases}
\]

\item $e_1\not\in E_{u_a}$ , $e_2\not\in E_{u_b}$, and $e_3\not\in E_{u_c}$ for every $a,b,c\in [s]$.\\
Let $e_1=v_iv_{i+1}$ , $e_2=v_jv_{j+1}$ and $e_3=v_kv_{k+1}$ (indices modulo $d$), where $i<j<k$. 

Since \( d \ge 3t-5> 3(t-3) \), it is impossible for all three inequalities \( j - i \leq t-3 \), \( k - j \leq t-3 \), and \( (i - k) \bmod d \leq t-3 \) to hold simultaneously; thus, without loss of generality, we assume \( (i - k) \bmod d > t-3 \). This implies that if $W_{d + 1}(s)$ contains a copy of the graph $F_t$ that  covers both $e_1$ and $e_3$, then it must cover $e_2$, yielding at most $s((t+j-2)-(j+1)+1)=s(t-2)$ copies of $F_t$ covering both $e_1$ and $e_2$ or both $e_2$ and $e_3$. If $\mathbf{X}(\theta_{t,\ell})$ is symmetric or asymmetric, then each copy of $F_t$ contains one or two copies of $\theta_{t,\ell}$, respectively. Therefore, 
\[
q\leq
\begin{cases}
s((t+j-2)-(j+1)+1)\leq s(t - 2), &\text{if } \mathbf{X}(\theta_{t,\ell}) \text{ is symmetric},\\
2s((t+j-2)-(j+1)+1)\leq 2s(t - 2), &\text{if } \mathbf{X}(\theta_{t,\ell}) \text{ is asymmetric.}
\end{cases}
\]

\item \label{caseB3} $e_1,e_2\in E_{u_a}$ for some $a\in [s]$ and $e_3\not\in E_{u_b}$ for every $b\in[s]$.\\
Let $e_1=u_av_i$, $e_2=u_av_j$ with $i<j$ and $e_3=v_kv_{k+1}$  (indices modulo $d$) for some $k$. Note that $k=i$ or $k=j$ is possible. 

Suppose first $i\le k< j$. 
Since $d\ge 3t-5> 3t-7$, it is impossible for all three inequalities \(k - i \leq t-2 \), \( j - k \leq t-2 \), and \( (i - j) \bmod d \leq t-2 \) to hold simultaneously; thus we consider three subcases. 
\begin{itemize}
    \item If \(k - i > t-2\), then any copy of \(F_t\) in \(W_{d}(s)\) that covers both \(e_1\) and \(e_3\) must necessarily cover \(e_2\). Thus, there are at most $(t+j-2)-j+1=t-1$ copies of $F_t$ covering both $e_2$ and $e_3$ or both $e_1$ and $e_2$. 
    \item If \(j - k > t-2\), then any copy of \(F_t\) in \(W_{d}(s)\) that covers both \(e_2\) and \(e_3\) must necessarily cover \(e_1\). Thus, there are at most $(t+i-2)-i+1=t-1$ copies of $F_t$ covering both $e_1$ and $e_3$ or both $e_1$ and $e_2$. 
     \item If \( (i - j) \bmod d > t-2\), then any copy of \(F_t\) in \(W_{d}(s)\) that covers both \(e_1\) and \(e_2\) must necessarily cover \(e_3\). Thus, there are at most $(t+k-2)-(k+1)+1=t-2$ copies of $F_t$ covering both $e_1$ and $e_3$ or both $e_2$ and $e_3$. 
\end{itemize}

The case where either $k \geq j$ or $k < i$ can be handled in a similar manner, leading to the same conclusion that there exist at most $t - 1$ copies of $F_t$ that cover at least two of the three edges $e_1$, $e_2$, and $e_3$.
Because each $F_t^h$ contains one or two copies of $\theta_{t,\ell}$ when $\mathbf{X}(\theta_{t,\ell})$ is symmetric or asymmetric, respectively, we have
\[
q\leq
\begin{cases}
t - 1, &\text{if } \mathbf{X}(\theta_{t,\ell}) \text{ is symmetric},\\
2(t - 1), &\text{if } \mathbf{X}(\theta_{t,\ell}) \text{ is asymmetric.}
\end{cases}
\]

\item $e_1\in E_{u_a}$ for some $a\in [s]$, and $e_2\not\in E_{u_b}$, $e_3\not\in E_{u_c}$ for every $b,c\in [s]$.\\
Let $e_1=u_av_i$ , $e_2=v_jv_{j+1}$ and $e_3=v_kv_{k+1}$   (indices modulo $d$) for some $k,j$, where $j<k$.

Analogous to the approach in \ref{caseB3}, we partition the analysis into three primary cases based on the ordering of \(i\), \(j\), and \(k\): \(j \leq i < k\), \(i \geq k\), or \(i < j\), with each further subdivided into three subcases (e.g., for \(j \leq i < k\), these are \(i - j > t - 2\), \(k - i > t - 2\), and \((j - k) \bmod d > t - 2\)); across all nine subcases, we uniformly conclude that there exist at most \(t - 1\) copies of \(F_t\) covering at least two of the edges \(e_1\), \(e_2\), and \(e_3\). Because each $F_t^h$ contains one or two copies of $\theta_{t,\ell}$ when $\mathbf{X}(\theta_{t,\ell})$ is symmetric or asymmetric, respectively, we have
\[
q \leq
\begin{cases}
t - 1, & \text{if } \mathbf{X}(\theta_{t,\ell}) \text{ is symmetric,} \\
2(t - 1), & \text{if } \mathbf{X}(\theta_{t,\ell}) \text{ is asymmetric.}
\end{cases}
\]

\item $e_1\in E_{u_a}$, $e_2\in E_{u_b}$ for distinct $a,b\in [s]$, and $e_3\not\in E_{u_c}$ for every $c\in [s]$.\\
Applying the same proof technique as in \ref{caseA3} of Lemma~\ref{lem:p2} twice, first for the edge pair $\{e_1, e_2\}$ and second for $\{e_1, e_3\}$, we conclude
\[
q\leq
\begin{cases}
2(t-2)\leq s(t-2), &\text{if } \theta_{t,\ell} = F_t,\\
2(t-3)\leq s(t-3), &\text{if } \theta_{t,\ell} \not= F_t \text{ and } \mathbf{X}(\theta_{t,\ell}) \text{ is symmetric},\\
4(t-3)\leq 2s(t-3), &\text{if } \mathbf{X}(\theta_{t,\ell}) \text{ is asymmetric.}
\end{cases}
\]
Note that $s\geq 2$ in this case.
\end{enumerate}

This completes the proof. 
\end{proof}

\begin{lemma} \label{lem:p4+}
Let $d,s,t,\ell$ be integers such that $d\geq t-1$ and $t\geq \ell+3$.
For any $i\geq 4$ edges in the graph $W_d(s)$, there exist at most $N_{\ref{lem:p4+}}$ copies of the graph $\theta_{t,\ell}$ that contain at least two of these edges and exactly one hub vertex of $W_d (s)$ , where
\[
N_{\ref{lem:p4+}}=
\begin{cases}
\frac{(t-1)i}{2}, &\text{if } \theta_{t,\ell} = F_t \text{ and } s=1,\\
s(t-2)i &\text{if } \mathbf{X}(\theta_{t,\ell}) \text{ is asymmetric,}\\
\frac{s(t-2)i}{2},  &\text{otherwise.} 
\end{cases}  
\]
\end{lemma}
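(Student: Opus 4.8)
The plan is to bound the counted quantity $q$ by a charging/double-counting argument rather than by summing a per-pair estimate over all $\binom{i}{2}$ pairs (which would only give a bound quadratic in $i$). Fix the edges $e_1,\dots,e_i$ and let $\mathcal{T}$ be the family of copies of $\theta_{t,\ell}$ that use exactly one hub and contain at least two of the $e_j$, so that $q=|\mathcal{T}|$. Writing $\rho(e)$ for the number of one-hub copies of $\theta_{t,\ell}$ containing a given edge $e$, and counting incidences $(T,e_j)$ with $T\in\mathcal{T}$ and $e_j\in E(T)$, the fact that every $T\in\mathcal{T}$ meets at least two of the edges yields
\[
2q \le \sum_{j=1}^{i}\rho(e_j) \le i\,\max_{1\le j\le i}\rho(e_j),
\]
so $q\le \tfrac{i}{2}\max_j\rho(e_j)$, and it remains only to bound $\rho(e)$ for a single edge $e$.

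Next I would record the structural description of a one-hub copy $T$. Because $d\ge t-1$, the length-$d$ boundary cycle contains no other $t$-cycle, so the $t$-cycle of a one-hub copy cannot lie entirely on the boundary and must run through the unique hub $u_a$; with only one hub available, the remaining $t-1$ cycle-vertices are then consecutive boundary vertices forming an arc, and each chord of $\theta_{t,\ell}$ is forced to be a spoke at $u_a$ (a boundary--boundary chord would coincide with a rim edge). Hence the apex $v_1$ of $\theta_{t,\ell}$ is realised by $u_a$, and $T$ consists of exactly $\ell+2$ spokes at $u_a$ together with the $t-2$ rim edges of its arc; moreover a given arc carries exactly one copy when $\mathbf{X}(\theta_{t,\ell})$ is symmetric and exactly two mirror copies when it is asymmetric.

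With this in place, $\rho(e)$ splits into two cases. If $e=v_qv_{q+1}$ is a rim edge, then $T$ contains $e$ iff its arc contains both endpoints; exactly $t-2$ arcs pass through a fixed rim edge, and over the $s$ hubs and the admissible orientations each such arc yields a copy actually containing $e$, so $\rho(e)\le s(t-2)$ (symmetric) or $\rho(e)\le 2s(t-2)$ (asymmetric). If $e=u_cv_q$ is a spoke, then $T$ must take $u_c$ as its hub and $v_q$ as one of its $\ell+2$ spoke-endpoints; counting (copy, spoke) incidences over all $d$ (resp.\ $2d$) one-hub copies at $u_c$ and using the rotational symmetry of $W_d(s)$ shows every spoke lies in $\ell+2$ (resp.\ $2(\ell+2)$) copies. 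The decisive point is that $\ell+2\le t-1$ always, with equality precisely when $\theta_{t,\ell}=F_t$; thus $\theta_{t,\ell}\neq F_t$ forces $\ell+2\le t-2$, which is exactly what collapses the spoke estimate down to the rim estimate.

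Assembling the maxima gives $\max_j\rho(e_j)$ equal to $t-1$ when $\theta_{t,\ell}=F_t$ and $s=1$, to $2s(t-2)$ when $\mathbf{X}(\theta_{t,\ell})$ is asymmetric, and to $s(t-2)$ otherwise (here one uses $s(t-2)\ge t-1$ for $s\ge 2$, and $\ell+2\le t-2$ for symmetric $\theta_{t,\ell}\neq F_t$); substituting into $q\le\tfrac{i}{2}\max_j\rho(e_j)$ reproduces the three declared values of $N_{\ref{lem:p4+}}$. I expect the main obstacle to be the structural step together with the spoke refinement $\ell+2\le t-2$ for $\theta_{t,\ell}\neq F_t$: without the observation that a non-fan copy uses strictly fewer than $t-1$ spokes, the spoke contribution would stall at $t-1$, and the sharp separation between the $F_t$ case and the remaining cases would be lost.
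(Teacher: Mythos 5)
Your proposal is correct and follows essentially the same route as the paper: the paper also bounds $2q$ by the incidence count $\sum_{j\ge 1} j\,q_j$, uses the same per-edge multiplicities (each spoke in $\ell+2$, resp.\ $2(\ell+2)$, one-hub copies and each rim edge in $s(t-2)$, resp.\ $2s(t-2)$), and closes with the same comparisons $\ell+2\le t-2$ for $\theta_{t,\ell}\neq F_t$ and $t-1\le s(t-2)$ for $s\ge 2$. The only cosmetic difference is that the paper keeps the split into $x$ spokes and $y$ rim edges and bounds the weighted sum, whereas you take a maximum over the edges; your explicit structural justification of the per-edge counts (which the paper merely asserts) is a welcome addition, not a deviation.
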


\begin{proof}
Let $x $ and $ y $ denote the number of spokes and rim edges, respectively, among the given $ i $ edges (so $ x + y = i $). The following properties hold for $ W_d (s) $. If $ \mathbf{X}(\theta_{t,n})$ is symmetric, then each spoke appears in $ \ell + 2 $ distinct $ \theta_{t,\ell} $-subgraphs and each rim edge in $ s(t - 2) $; if asymmetric, then each spoke appears in $ 2(\ell + 2) $ and each rim edge in $ 2s(t - 2) $. We define \( q \) as the number of \( \theta_{t,\ell} \)-copies that contain at least two of the \( i \) edges and exactly one hub vertex from \( W_d(s) \). Additionally, we define \( q_j \) as the number of such \( \theta_{t,\ell} \)-copies that contain exactly \( j \) edges and exactly one hub vertex from \( W_d(s) \).
Now, the definition of $q$ and $q_j$ implies
\begin{align*}
    &2q 
    \le 2q + q_1 
    \le \sum_{j\ge 1} j q_j \\
    =& 
  \begin{cases}
(t-1)x+s(t-2)y \leq (t-1)(x+y)=(t-1)i, &\text{if } \theta_{t,\ell} = F_t \text{ and } s=1,\\
(t-1)x+s(t-2)y \leq s(t-2)(x+y)=s(t-2)i, &\text{if } \theta_{t,\ell} = F_t \text{ and } s\ge2,\\
(\ell+2)x+s(t-2)y \leq s(t-2)(x+y)=s(t-2)i,  &\text{if } \theta_{t,\ell} \not= F_t \text{ and } \mathbf{X}(\theta_{t,\ell}) \text{ is symmetric},\\
2(\ell+2)x+2s(t-2)y \leq 2s(t-2)(x+y)=2s(t-2)i, &\text{if } \mathbf{X}(\theta_{t,\ell}) \text{ is asymmetric.}
\end{cases}  
\end{align*}
This proves the lemma.
\end{proof}

\begin{lemma} \label{lem:lucky7}
Suppose that $ W_{d}(s) $  contains a copy of the graph $ F_t $.
If $s=2$ and $t\geq 6$, or $s\geq 3$ and $t\geq 7$, then this
$ F_t $ contains exactly one hub vertex of $ W_{d}(s) $.
 \end{lemma}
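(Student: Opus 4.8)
The plan is to fix an arbitrary copy of $F_t$ in $W_{d}(s)$ and to determine the image of its \emph{center}, that is, the unique vertex $v_1$ of degree $t-1$ in $F_t$ (unique since the remaining vertices of $F_t$ have degree at most $3 < t-1$ when $t\ge 6$). Since the $t-1$ edges of $F_t$ incident to $v_1$ must be realized by distinct edges of $W_{d}(s)$, the vertex $w$ playing the role of $v_1$ has degree at least $t-1$ in $W_{d}(s)$. As every vertex of $W_{d}(s)$ is either a hub vertex (of degree $d$) or a boundary vertex (of degree $s+2$), I would split into two cases according to the type of $w$.

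The easy case is when $w$ is a hub vertex $u_a$. In $F_t$ the center $v_1$ is adjacent to all of $v_2,\dots,v_t$, so the images of these $t-1$ vertices all lie in the neighborhood of $u_a$, which consists only of boundary vertices. Consequently $u_a$ is the only hub vertex appearing in the copy, which is exactly the desired conclusion.

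The heart of the argument is to exclude the case where $w$ is a boundary vertex. Here I would exploit that deleting $v_1$ from $F_t$ leaves precisely the path $v_2 v_3\cdots v_t$ on $t-1$ vertices, since all chords of $F_t$ emanate from $v_1$. The images of $v_2,\dots,v_t$ all lie in $N(w)$ and consecutive ones are adjacent, so they form a path on $t-1$ distinct vertices inside the subgraph induced by $N(w)$. Writing $w^-,w^+$ for the two cycle-neighbors of $w$, we have $N(w)=\{w^-,w^+,u_1,\dots,u_s\}$; as the hubs are pairwise non-adjacent and $w^-,w^+$ are non-adjacent (whenever $d\ge 4$), this induced subgraph is exactly $K_{2,s}$ with parts $\{w^-,w^+\}$ and $\{u_1,\dots,u_s\}$. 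The key observation is that any path in a bipartite graph alternates between the two sides, so a path in $K_{2,s}$ uses at most two vertices of the size-$2$ side and hence at most three of the other side, giving at most $5$ vertices when $s\ge 3$ and at most $4$ vertices when $s=2$. Thus $t-1\le 5$ when $s\ge 3$ and $t-1\le 4$ when $s=2$, i.e. $t\le 6$ and $t\le 5$ respectively, each contradicting the hypotheses $t\ge 7$ and $t\ge 6$. This rules out the boundary case and leaves only the hub case, completing the proof.

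I expect the main obstacle to be the identification of $N(w)$ as $K_{2,s}$ together with the longest-path bound; this is where the non-adjacency of the hubs is crucial and where the two thresholds $t\ge 6$ (for $s=2$) and $t\ge 7$ (for $s\ge 3$) enter. The only technical wrinkle is the degenerate case $d=3$, where $w^-w^+$ becomes an edge; a direct check confirms that this single extra edge still cannot produce a path on more than $5$ (respectively $4$) vertices, so the bounds, and hence the conclusion, remain intact.
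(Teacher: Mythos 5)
Your proof is correct, and while it rests on the same two structural facts as the paper's (the location of the center of $F_t$ and the independence of the hub vertices), it is organized differently and uses a different combinatorial core. The paper argues by contradiction from the assumption that the copy contains two hubs: the center then cannot be a hub, so it is a boundary vertex $v$, and its $t-3$ neighbors in $F_t$ other than its two cycle-neighbors must all be hubs; for $s=2$ this exceeds the supply of hubs, and for $s\ge 3$ these hubs form an independent set in the boundary cycle $C_t$ of $F_t$, forcing $t-3\le\lfloor t/2\rfloor$, i.e.\ $t\le 6$. Having shown ``at most one hub,'' the paper then needs a separate degree comparison ($t-1\ge 5$ versus $\le 3$) to conclude the center is a hub, hence ``exactly one.'' You instead split directly on where the center lands: the hub case is immediate, and the boundary case is killed by observing that $F_t$ minus its center is a path on $t-1$ vertices that must embed in the graph induced on $N(w)$, which is $K_{2,s}$ (plus one extra edge when $d=3$), whose longest path has at most $5$ vertices (at most $4$ when $s=2$). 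This buys you three things: the argument is direct rather than by contradiction, the ``at most one'' and ``exactly one'' conclusions come in a single stroke, and the $K_{2,s}$ longest-path bound packages both constraints (only two cycle-neighbors, only $s$ pairwise non-adjacent hubs) into one clean statement; you also explicitly dispose of the degenerate case $d=3$, which the paper's wording silently covers but does not discuss. Both routes produce exactly the same thresholds ($t\le 5$ for $s=2$, $t\le 6$ for $s\ge 3$), consistent with the sharpness examples $\mathrm{rb}(W_d(2),F_5)$ and $\mathrm{rb}(W_d(3),F_6)$ in Section~\ref{sec:final}.
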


\begin{proof}
Suppose $ F_t $ is embedded in $ W_{d}(s) $. Assume for contradiction that $ F_t $ contains at least two hub vertices $ u_1$ and $ u_2 $ of $ W_{d}(s) $. Since hubs are independent (i.e., $ u_1u_2 \notin E(W_{d}(s)) $), the central vertex $ v $ of $ F_t $ cannot be a hub. Thus, $ v $ must lie on the boundary cycle $ C $ of $ W_{d}(s) $, where it has exactly two neighbors $ w_1 $ and $ w_2 $ on $ C $. The remaining $ t-3 $ neighbors of $ v $ (including $u_1$ and $u_2$) in $ F_t $ must then be hubs (as they cannot reside on $ C $ without violating the embedding). If $s=2$, then $ t-3 \leq 2$, implying $t\leq 5$, a contradiction. If $s\geq 3$, then these $ t-3 $ hubs must form an independent set in the boundary cycle of $ F_t $, implying $ t-3 \leq \lfloor t/2 \rfloor $ (since no two hubs can be adjacent in the cycle). This inequality simplifies to $ t \leq 6 $, contradicting $ t \geq 7 $.
Therefore, $ F_t $ contains at most one hub vertex of $ W_{d}(s) $. Given that the central vertex of $ F_t $ has degree $ t-1 \geq 5 $, while every vertex on $ C $ has degree at most $ 3 $ in $ F_t $, the central vertex must be a hub itself. This completes the proof.
\end{proof}

   Let $G$ be an edge-colored graph. For a positive integer $i$, we define $A_i(G)$ as the set of colors that appear on exactly $i$ edges of $G$. For any color $c$  and any subgraph $H\subseteq G$, we introduce the notation $p_j(c,G,H)$ to represent the number of $H$-subgraphs within the graph $G$ that contain exactly $j$ edges colored with the color $c$. The total number of $H$-subgraphs in $G$ that have at least two edges colored with the color $c$ is given by the following sum:
$$
p(c,G,H)=\sum_{j\geq2}p_j(c,G,H).
$$
Applying Lemmas \ref{lem:p2}, \ref{lem:p3}, \ref{lem:p4+}, and \ref{lem:lucky7}, we conclude the following.

\begin{lemma}\label{lem:all-used-bounds}
Let $d,s,t,\ell$ be nonnegative integers such that $d\geq 3t-5$ and $t\geq \max\{4,\ell+3\}$.
Given an edge-colored $ W_{d}(s) $ and a color $ c $, we have
\begin{align*}
p(c, W_{d}, F_t) 
\le&
\begin{cases}
t-2, & \text{if } c \in A_2(W_{d}), \\
t-1, & \text{if } c \in A_3(W_{d}), \\
\frac{(t-1)i}{2}, & \text{if } c \in A_i(W_{d}) \text{ for } i \geq 4.
\end{cases}
\\
p(c, W_{d}, \theta_{t,\ell}) 
\le&
\begin{cases}
\beta(t-3), & \text{if } c \in A_2(W_{d}), \\
\beta(t-1), & \text{if } c \in A_3(W_{d}), \\
\frac{\beta(t-2)i}{2},  & \text{if } c \in A_i(W_{d}) \text{ for } i \geq 4,
\end{cases}\\
\text{ where }
\beta=& 
\begin{cases}
1, & \text{if } \theta_{t,\ell} \neq F_t \text{ and } \mathbf{X}(\theta_{t,\ell}) \text{ is symmetric}, \\
2 &\text{if } \mathbf{X}(\theta_{t,\ell}) \text{ is asymmetric,}\\ 
\end{cases}  \\
\text{ and if } t\geq 6 \text{ and } s= 2, 
\text{ or } t\geq 7 \text{ and } s\geq 3,
\text{ then}\\
    p(c, W_{d}(s), F_t) 
\le&
\begin{cases}
s(t-3), & \text{if } c \in A_2(W_{d}(s)), \\
s(t-2), & \text{if } c \in A_3(W_{d}(s)), \\
\frac{s(t-2)i}{2},  & \text{if } c \in A_i(W_{d}(s)) \text{ for } i \geq 4.
\end{cases}
\end{align*}
\end{lemma}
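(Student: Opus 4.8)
The plan is to read off the three groups of inequalities directly from the counting estimates of Lemmas~\ref{lem:p2}, \ref{lem:p3} and \ref{lem:p4+}, with Lemma~\ref{lem:lucky7} used only to localise the hub. The first, purely definitional, step is this: if $c\in A_i(G)$ then $c$ appears on exactly $i$ edges, and every $H$-subgraph counted by $p(c,G,H)=\sum_{j\ge 2}p_j(c,G,H)$ carries at least two edges of colour $c$ and hence contains at least two of these $i$ fixed edges. Thus $p(c,G,H)$ never exceeds the number of $H$-copies meeting a prescribed $i$-edge set in at least two edges, which is exactly the quantity the counting lemmas bound.

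The next step is to justify restricting to $H$-copies that contain \emph{exactly one} hub vertex, since this is the hypothesis under which Lemmas~\ref{lem:p2}--\ref{lem:p4+} are phrased. For $G=W_d$, i.e. $s=1$, this is automatic: such a copy meets the single hub in zero or one vertex, and a zero-hub copy would live inside the boundary cycle $C_d$; but $C_d$ contains no $\theta_{t,\ell}$ (its unique cycle is $C_d$ itself, while any chord forces a vertex of degree $\ge 3$) because $d\ge 3t-5>t$, so every $\theta_{t,\ell}$- and $F_t$-copy uses the hub, necessarily exactly once. For $G=W_d(s)$ with $s\ge 2$ and $H=F_t$ the zero-hub case is excluded in the same way, and the two-hub case is precisely what Lemma~\ref{lem:lucky7} rules out in the stated ranges ($t\ge 6,\,s=2$ or $t\ge 7,\,s\ge 3$). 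In every case the ``exactly one hub'' clause discards no relevant copy, so the counting bounds apply verbatim.

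It then remains to substitute $i=2,3$ and $i\ge 4$ into $N_{\ref{lem:p2}}$, $N_{\ref{lem:p3}}$ and $N_{\ref{lem:p4+}}$ and to simplify the maxima. For $p(c,W_d,F_t)$ one puts $s=1$, giving $\max\{s(t-3),t-2\}=t-2$ and $\max\{s(t-2),t-1\}=t-1$. For $p(c,W_d,\theta_{t,\ell})$ one again sets $s=1$ and records the common factor as $\beta=1$ when $\mathbf{X}(\theta_{t,\ell})$ is symmetric and $\beta=2$ when it is asymmetric, so that the three counting constants become $\beta(t-3)$, $\beta(t-1)$ and $\tfrac{\beta(t-2)i}{2}$. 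For $p(c,W_d(s),F_t)$ with $s\ge 2$ one uses $s(t-3)\ge t-2$ and $s(t-2)\ge t-1$ (both valid once $s\ge 2$ and $t\ge 4$) to collapse the maxima to $s(t-3)$ and $s(t-2)$, and reads the $i\ge 4$ value $\tfrac{s(t-2)i}{2}$ off the ``otherwise'' branch of Lemma~\ref{lem:p4+}, which applies because $F_t$ is symmetric and $s\ne 1$.

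I expect the only delicate point to be the hub-localisation step: one must be sure that no copy contributes with zero hubs or with two or more hubs. The zero-hub possibility is killed by the cyclic structure of the boundary together with $d>t$, and the multi-hub possibility is exactly the content of Lemma~\ref{lem:lucky7}; once these are in place, the remainder is routine substitution and arithmetic on the $\max$-expressions.
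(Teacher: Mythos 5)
Your proposal is correct and takes essentially the same approach as the paper: the paper's own proof is a one-line table lookup that derives the nine bounds from Lemmas~\ref{lem:p2}, \ref{lem:p3}, and \ref{lem:p4+} (with Lemma~\ref{lem:lucky7} invoked beforehand for hub localisation), which is exactly your substitution scheme. Your write-up is in fact more explicit than the paper's, since you spell out why every relevant copy contains exactly one hub vertex in the $s=1$ case (no copy of $\theta_{t,\ell}$ fits inside $C_d$ because $d\geq 3t-5>t$) and verify the simplification of the $\max$-expressions, both of which the paper leaves implicit.
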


\begin{proof}
    There are a total of nine upper bounds from top to bottom, each derived based on Lemma \ref{lem:p2}, \ref{lem:p3}, \ref{lem:p4+}, \ref{lem:p2}, \ref{lem:p3}, \ref{lem:p4+}, \ref{lem:p2}, \ref{lem:p3} and \ref{lem:p4+}, respectively.
\end{proof}














\section{Optimal colorings with rainbow structures} \label{sec:upperbound}

\begin{thm} \label{thm:W-F}
Let $d$ and $t$ be integers such that $d\geq 3t-5 $ and $t\ge 4$.
Any edge coloring of $W_{d}$ with at least
$\left\lfloor \frac{2t-5}{t-2}d \right\rfloor + 1$ colors contains a rainbow subgraph isomorphic to $F_t$.
In other words,
$${\rm rb}(W_d ,F_t)\leq \left\lfloor \frac{2t-5}{t-2} d\right\rfloor  + 1.$$
\end{thm}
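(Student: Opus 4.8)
The plan is to prove the contrapositive by a double-counting estimate, in the spirit of the counting lemmas just established. I will show that \emph{any} edge-coloring of $W_{d}$ using $c$ colors and containing no rainbow $F_t$ must satisfy $c \le \frac{2t-5}{t-2}d$. Since $c$ is an integer this forces $c \le \lfloor \frac{2t-5}{t-2}d\rfloor$, so no coloring with at least $\lfloor \frac{2t-5}{t-2}d\rfloor+1$ colors can avoid a rainbow $F_t$, which is exactly the claim. Note this also dispenses with any ``at least $k$ colors'' subtlety: the bound on $c$ is proved for every number of colors at once.

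First I would set up the bookkeeping. Write $A_i = A_i(W_{d})$ for the set of colors used on exactly $i$ edges. Since $W_{d}$ has $2d$ edges ($d$ rim edges and $d$ spokes), we have $\sum_i i|A_i| = 2d$ and $\sum_i |A_i| = c$, so the color deficiency satisfies $\sum_{i\ge 2}(i-1)|A_i| = 2d-c$. Next I would count the host copies of $F_t$: the hub together with any $t-1$ consecutive boundary vertices spans an $F_t$, giving $d$ copies, and a short degree argument (the centre of $F_t$ has degree $t-1\ge 3$, while every boundary vertex has degree $3$ and $C_d$ carries no chords) shows every $F_t\subseteq W_d$ arises this way; in particular each such copy uses the single hub, so the ``exactly one hub vertex'' hypothesis of Lemma~\ref{lem:all-used-bounds} is automatic.

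The heart of the argument is the double count. Because there is no rainbow $F_t$, each of these $d$ copies carries two edges of a common color and is therefore counted in $p(c',W_{d},F_t)$ for at least one color $c'$; summing over colors gives $d \le \sum_{c'} p(c',W_{d},F_t)$. I then apply the $F_t$-bounds of Lemma~\ref{lem:all-used-bounds} (valid here since $d\ge 3t-5$), which give $p(c',W_{d},F_t)\le t-2$, $t-1$, and $\frac{(t-1)i}{2}$ according as $c'\in A_2$, $A_3$, or $A_i$ with $i\ge 4$. The key elementary inequalities, both holding precisely for $t\ge 4$, are $t-1\le 2(t-2)$ and $\frac{(t-1)i}{2}\le (t-2)(i-1)$ for $i\ge 4$; together they say that each color class of size $i$ contributes at most $(t-2)(i-1)$ to the sum. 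Hence
\[
d \;\le\; \sum_{c'} p(c',W_{d},F_t)\;\le\;(t-2)\sum_{i\ge 2}(i-1)|A_i| \;=\; (t-2)(2d-c).
\]
Rearranging gives $c \le 2d-\frac{d}{t-2} = \frac{(2t-5)d}{t-2}$, completing the proof after taking the floor.

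I expect the main obstacle to be the coefficient-comparison step: one must verify that the weight $t-2$ attached to doubly-covered colors dominates every competing ratio $\frac{(t-1)i}{2(i-1)}$ for $i\ge 4$ as well as $\frac{t-1}{2}$ for triply-covered colors, and that this domination is exactly what pins down the threshold $t\ge 4$ (at $t=4,\ i=4$ the second inequality is tight, so the argument is genuinely sharp there). The host-count of $F_t$ copies, while routine, must be phrased carefully enough to justify the equality with $d$ and to confirm the single-hub hypothesis of the counting lemma.
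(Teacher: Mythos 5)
Your proof is correct, and it starts down the same road as the paper: both arguments double-count the $d$ copies of $F_t$ in $W_d$ (each forced, absent a rainbow copy, to carry a repeated color), bound $\sum_{c'}p(c',W_d,F_t)$ via Lemma~\ref{lem:all-used-bounds}, and decompose by the color-class sizes $|A_i|$; your remarks on the host count and the single-hub hypothesis are exactly the points that need checking, and they hold (for $t=4$ the no-chord observation is what identifies a boundary-centered $F_4$ with a hub-centered one). Where you genuinely diverge is the endgame. The paper substitutes the lemma bounds, derives the intermediate inequality $\tfrac{t-1}{2}|A_1|+|A_2|+\tfrac{t-1}{2}|A_3|\le(t-2)d$, and then finishes with a case split ($t=4$ versus $t\ge 5$), the latter requiring the auxiliary bound $d\ge\tfrac{1}{2}\sum_{i=1}^{t-3}i|A_i|$ and a re-weighting of coefficients. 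You instead note that all three lemma bounds are dominated by the single expression $(t-2)(i-1)$: indeed $t-2=(t-2)\cdot 1$; $t-1\le 2(t-2)$ for $t\ge 3$; and $\tfrac{(t-1)i}{2}\le(t-2)(i-1)$ is equivalent to $i(t-3)\ge 2(t-2)$, which holds for all $t\ge 4$ and $i\ge 4$ (tight at $t=4$, $i=4$, as you say). This collapses the computation to
\[
d\;\le\;(t-2)\sum_{i\ge 2}(i-1)|A_i|\;=\;(t-2)(2d-c),
\]
hence $c\le\tfrac{(2t-5)d}{t-2}$, with no case analysis at all. Your version has the further small advantage that it bounds the number of colors of an arbitrary rainbow-$F_t$-free coloring, so the ``at least $k$ colors'' monotonicity in the definition of ${\rm rb}$ is handled explicitly rather than left implicit, as it is in the paper's fixed-$k$ contradiction argument.
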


\begin{proof}
Let $\varphi$ be an edge $ k $-coloring of $ W_{d} $, where
$
k := \left\lfloor \frac{2t-5}{t-2}d \right\rfloor + 1.
$
Suppose, for the sake of contradiction, that no rainbow $ F_t $ subgraph exists in $ W_{d} $.
Thus, each $F_t$ contained in $W_{d}$ has a color $c$ with $p(c):=p(c,W_{d},F_t)\ge 1$.
Set $A_i:=A_i(W_{d})$.
Given that the number of $F_t$ contained in $W_{d}$ is exactly $d$ and $2d=|E(W_{d})|=\sum\limits_{i\ge 1}i|A_i|$, we conclude 
\begin{align}
    \notag d &\le \sum\limits_{c\in [k] }p(c) = \sum\limits_{i\ge 2}\sum\limits_{c\in A_i}p(c) = \sum\limits_{c\in A_2}p(c)+\sum\limits_{c\in A_3}p(c)+\sum\limits_{i\ge 4}\sum\limits_{c\in A_i}p(c)
    \\
   \notag  &\le (t-2)|A_2|+(t-1)|A_3|+\frac{(t-1)}{2} \sum\limits_{i\ge 4} i|A_i| \\
   \notag &= (t-2)|A_2|+(t-1)|A_3|+\frac{(t-1)}{2}(2d-|A_1|-2|A_2|-3|A_3|)\\
   \notag &= -\left( \frac{t-1}{2}|A_1|+|A_2|+\frac{t-1}{2}|A_3| \right)+(t-1)d.
\end{align}
Here, the second inequality follows from Lemma~\ref{lem:all-used-bounds}. It follows
\begin{align} \label{eq:1-1}
    \frac{t-1}{2}|A_1|+|A_2|+\frac{t-1}{2}|A_3| \le (t-2)d.
\end{align}

If $ t= 4$, then \eqref{eq:1-1} implies
\begin{align*}
    3|A_1|+2|A_2|+|A_3| \le 4d.   
\end{align*}
Thus,
\begin{align*}
    k=\left\lfloor \frac{3}{2}d \right\rfloor + 1 =\sum_{i\geq 1}|A_i| 
    &\le \frac{1}{4}(3|A_1|+2|A_2|+|A_3|) + \sum\limits_{i\ge1} \frac{i}{4}|A_i| \\
    &\le \frac{1}{4} \times 4d + \frac{1}{4} \times 2d =\frac{3}{2}d,
\end{align*}
a contradiction.

If $t\geq 5$, then applying $d\ge \frac{1}{2}\sum\limits_{i=1}^{t-3} i|A_i|$, we reformulate \eqref{eq:1-1} as follows:
\begin{align*}
    \frac{t-1}{2}|A_1|+|A_2|+\frac{t-1}{2}|A_3| \le (t-2)d &=  (2t-7)d-(t-5)d \\
         &\le (2t-7)d-\frac{t-5}{2}\sum\limits_{i=1}^{t-3} i|A_i|.  
\end{align*}
This implies
\begin{align*}
    (2t-7)d  &\geq (t-3)|A_1|+(t-4)|A_2|+(2t-8)|A_3|+\sum_{i=4}^{t-3}\frac{(t-5)i}{2}|A_i|\\
            &\geq (t-3)|A_1|+(t-4)|A_2|+(t-5)|A_3|+\sum_{i=4}^{t-3} (t-i-2)|A_i| 
             =\sum_{i=1}^{t-3} (t-i-2)|A_i|.
\end{align*}
Thus,
\begin{align*}
    k=\bigg\lfloor \frac{2t-5}{t-2}d  \bigg\rfloor +1 &=\sum_{i\geq 1}|A_i| 
    \le \frac{1}{t-2}\sum_{i=1}^{t-3} (t-i-2)|A_i| +\sum\limits_{i\ge 1} \frac{i}{t-2}|A_i| \\
    &\le \frac{1}{t-2} \times (2t-7)d + \frac{1}{t-2} \times 2d
    =\frac{2t-5}{t-2}d.
\end{align*}
This contradiction completes the proof.
\end{proof}

\begin{thm} \label{thm:Ws-F-and-Wd-theta}
Let $d$, $s$, $t$ and $\ell$ be integers such that $d\geq 3t  -5$ and $t\ge \max\{5, \ell+4\}$.
The following holds:
\begin{enumerate}
    \item any edge coloring of $W_{d}(s)$ with at least
$\left\lfloor\frac{(s+1)t-(3s+4)}{t-3}d \right\rfloor +1$ colors contains a rainbow $F_t$ if we further have $s=2$ and $t\geq 6$, or $s\geq 3$ and $t\geq 7;$
  \item any edge coloring of $W_{d}$ with at least
$\left\lfloor\frac{2t-7}{t-3}d\right\rfloor +1$ colors contains a rainbow $\theta_{t,\ell}$.
\end{enumerate}
In other words,
\begin{align*}
{\rm rb}(W_d (s),F_t) &\leq \bigg\lfloor\frac{(s+1)t-(3s+4)}{t-3}d\bigg\rfloor +1~(s=2 {\rm ~and }~t\geq 6, {\rm ~or }~ s\geq 3 {\rm ~and }~ t\geq 7);\\
{\rm rb}(W_d,\theta_{t,\ell}) &\leq \bigg\lfloor\frac{2t-7}{t-3}d\bigg\rfloor +1.
\end{align*}
\end{thm}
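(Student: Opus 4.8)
The plan is to prove both bounds by the contradiction-and-counting scheme already used for Theorem~\ref{thm:W-F}, feeding in the sharper per-colour estimates of Lemma~\ref{lem:all-used-bounds}. Fix an edge $k$-colouring $\varphi$ with $k$ equal to the claimed value, suppose no rainbow copy of the target graph exists (so every copy carries a colour occurring on at least two of its edges), and write $A_i:=A_i(G)$ for the host $G$. The first task is a lower bound on the number of target copies. For part~(1), Lemma~\ref{lem:lucky7} guarantees that under $s=2,\,t\ge6$ or $s\ge3,\,t\ge7$ every $F_t\subseteq W_d(s)$ meets exactly one hub, which must be its centre (degree $t-1\ge5$, whereas a rim vertex has degree at most $3$ inside $F_t$); hence $F_t$-copies correspond to a choice of hub together with an arc of $t-1$ consecutive boundary vertices, giving exactly $sd$ of them. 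For part~(2), every $\theta_{t,\ell}\subseteq W_d$ uses the unique hub: its centre has degree $\ell+2$, which for $\ell\ge2$ exceeds the rim degree $3$, while for $\ell\le1$ a hub-free copy would lie inside the chordless cycle $C_d$, impossible for $d>t$. Thus each copy meets exactly one hub, so the estimates of Lemma~\ref{lem:all-used-bounds} apply to all copies, and each of the $d$ boundary arcs hosts $\beta$ hub-centred copies, yielding at least $\beta d$ copies.

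With these counts in hand, I sum $p(c)$ over all colours, invoke Lemma~\ref{lem:all-used-bounds} in each class $A_2,A_3,A_i\ (i\ge4)$, and substitute $\sum_{i\ge4}i|A_i|=|E(G)|-|A_1|-2|A_2|-3|A_3|$, where $|E(W_d(s))|=(s+1)d$ and $|E(W_d)|=2d$. I expect the $\beta$-factors in part~(2) to cancel against the $\beta d$ count, and the $s$-factors in part~(1) against the $sd$ count. The outcome is, in both cases, a master inequality of the shape
\[
\tfrac{t-2}{2}|A_1|+|A_2|+\gamma\,|A_3|\ \le\ \Lambda\, d,
\]
with $\gamma=\tfrac{t-2}{2},\ \Lambda=\tfrac{(s+1)t-2s-4}{2}$ for part~(1) and $\gamma=\tfrac{t-4}{2},\ \Lambda=t-3$ for part~(2).

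The endgame is to contradict $k=\lfloor\Lambda'd\rfloor+1$, where $\Lambda'=\tfrac{(s+1)t-(3s+4)}{t-3}$ in part~(1) and $\Lambda'=\tfrac{2t-7}{t-3}$ in part~(2), by showing $\sum_i|A_i|\le\Lambda'd$. For this I split $|A_i|=\tfrac1{t-3}\bigl((t-i-3)|A_i|+i|A_i|\bigr)$ for $i\le t-3$ and $|A_i|\le\tfrac{i}{t-3}|A_i|$ for $i\ge t-2$, so that $\sum_i|A_i|\le\tfrac1{t-3}\bigl(\sum_{i=1}^{t-3}(t-i-3)|A_i|+|E(G)|\bigr)$. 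It then suffices to bound $\sum_{i=1}^{t-3}(t-i-3)|A_i|$ by the right multiple of $d$, which I obtain from the master inequality by adding $r\sum_{i=1}^{t-3}i|A_i|$ to its left side and using the trivial reserve $\sum_{i=1}^{t-3}i|A_i|\le|E(G)|$ on the right, with $r=\tfrac{t-6}{2}$ chosen so that the resulting coefficient of each $|A_i|$ is at least $t-i-3$. A direct check shows that for $t\ge6$ (where $r\ge0$) the coefficients of $|A_1|,|A_2|$ become exactly $t-4,t-5$, while those of $|A_3|$ and $|A_i|\ (i\ge4)$ dominate $t-6$ and $t-i-3$; the right-hand constant collapses to precisely $\bigl((s+1)t-4s-5\bigr)d$ in part~(1) and $(2t-9)d$ in part~(2), and dividing by $t-3$ and adding $|E(G)|/(t-3)$ returns exactly $\Lambda'd$.

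The main obstacle, and the point demanding care, is that the reserve coefficient $r=\tfrac{t-6}{2}$ is nonnegative only for $t\ge6$, so the borderline $t=5$ in part~(2) (forcing $\ell\le1$) must be handled separately, exactly as the $t=4$ case of Theorem~\ref{thm:W-F}: there the master inequality reads $3|A_1|+2|A_2|+|A_3|\le4d$, and $\sum_i|A_i|\le\tfrac14(3|A_1|+2|A_2|+|A_3|)+\tfrac14\sum_i i|A_i|\le d+\tfrac12d=\tfrac32 d<k$ is the desired contradiction. A second structural point needing a clean write-up is that in part~(2) the count may legitimately be restricted to copies meeting exactly one hub even for $\ell=1$: a copy of $\theta_{t,1}$ centred at a boundary vertex forces its single chord onto the hub, so such a copy still contains the hub and is captured by Lemma~\ref{lem:all-used-bounds} — which is exactly why the lower bound $\beta d$, rather than an exact enumeration, is all that the argument requires.
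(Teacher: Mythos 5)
Your proposal is correct and follows essentially the same route as the paper's own proof: the same contradiction-and-counting scheme, the same per-colour bounds from Lemma~\ref{lem:all-used-bounds} (with the hub-containment of every copy justified via Lemma~\ref{lem:lucky7} for $s\ge 2$ and a direct degree/cycle argument for $s=1$), the same master inequalities \eqref{eq:1-2} and \eqref{eq:1-3}, the same reserve trick with coefficient $\tfrac{t-6}{2}$ (the paper's $(t-6)\sum_{i\le t-4} i|A_i|$ step), and the same separate treatment of $t=5$. Your computations of the coefficients and of the right-hand constants $\bigl((s+1)t-4s-5\bigr)d$ and $(2t-9)d$ match the paper exactly, and your observation that only a lower bound $\beta d$ on the number of copies is needed (rather than the paper's exact count) is a sound, if minor, simplification.
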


\begin{proof}
Let $ \varphi $ be an edge $ k $-coloring of $ W_{d}(s) $, where
$k := \lfloor\frac{(s+1)t-(3s+4)}{t-3}d\rfloor +1.$
For convenience, we define \( H \) as follows:
\[
H := 
\begin{cases} 
F_t & \text{if } s=2 {\rm ~and }~t\geq 6, {\rm ~or }~ s\geq 3 {\rm ~and }~ t\geq 7, \\
\theta_{t,\ell} & \text{if } s = 1.
\end{cases}
\]
Lemma \ref{lem:lucky7} implies that each $H$ contained in $ W_d (s) $ contains exactly one hub vertex of $ W_{d}(s) $.

Suppose, for the sake of contradiction, that no rainbow $H$ exists in $ W_d (s) $.
Thus, each $H$ contained in $W_{d}(s)$ has a color $c$ with $p(c):=p(c,W_{d}(s),H)\ge 1$.
Set $A_i:=A_i(W_{d}(s))$. The number of $ H $ contained in $ W_{d}(s) $ is exactly $ sd $ or $ 2sd $, depending on whether $ \mathbf{X}(H) $ is symmetric or asymmetric, respectively, and $$(s+1)d=|E(W_{d}(s))|=\sum\limits_{i\ge 1}i|A_i|.$$ 
We consider the following two cases.

\begin{enumerate}[label={Case \arabic*}]

\item $s\geq 2$.

By Lemma \ref{lem:all-used-bounds}, if $ \mathbf{X}(H) $ is symmetric, then
\begin{align}
    \notag sd &\le \sum\limits_{c\in [k] }p(c) = \sum\limits_{i\ge 2}\sum\limits_{c\in A_i}p(c) = \sum\limits_{c\in A_2}p(c)+\sum\limits_{c\in A_3}p(c)+\sum\limits_{i\ge 4}\sum\limits_{c\in A_i}p(c)
    \\
   \notag  &\le s(t-3)|A_2|+s(t-2)|A_3|+ \frac{s(t-2)}{2}\sum\limits_{i\ge 4} i|A_i| \\
   \notag &= \frac{s}{2} \bigg( 2(t-3)|A_2|+2(t-2)|A_3|+ (t-2) \big((s+1)d-|A_1|-2|A_2|-3|A_3|\big) \bigg)\\
   \notag &= \frac{s}{2} \bigg( (s+1)(t-2)d-(t-2)|A_1|-2|A_2|-(t-2)|A_3| \bigg),
\end{align}
and if $ \mathbf{X}(H) $ is asymmetric, then
\begin{align}
    \notag 2sd &\le \sum\limits_{c\in [k] }p(c) = \sum\limits_{i\ge 2}\sum\limits_{c\in A_i}p(c) = \sum\limits_{c\in A_2}p(c)+\sum\limits_{c\in A_3}p(c)+\sum\limits_{i\ge 4}\sum\limits_{c\in A_i}p(c)
    \\
   \notag  &\le 2s(t-3)|A_2|+2s(t-2)|A_3|+s(t-2)\sum\limits_{i\ge 4} i|A_i| \\
   \notag &= s\bigg(2(t-3)|A_2|+2(t-2)|A_3|+(t-2)((s+1)d-|A_1|-2|A_2|-3|A_3|)\bigg)\\
   \notag &= s\bigg((s+1)(t-2)d  -(t-2)|A_1|-2|A_2|-(t-2)|A_3| \bigg).
\end{align}
In either case, we have
\begin{align} \label{eq:1-2}
    (t-2)|A_1|+2|A_2|+(t-2)|A_3| \le \bigg((s+1)(t-2)-2\bigg)d.
\end{align}

If $t=5$, then \eqref{eq:1-2} implies
\begin{align*}
    3|A_1|+2|A_2|+|A_3|\le (3s+1)d .
\end{align*}
Thus,
\begin{align*}
    k=\left \lfloor \frac{2s+1}{2}d \right \rfloor +1 
    =\sum_{i\ge1} |A_i|
    &\le \frac{1}{4}(3|A_1|+2|A_2|+|A_3|) + 
    \sum \limits_{i\ge 1} \frac{i}{4}|A_i| \\
    &\le \frac{1}{4} \times (3s+1)d +\frac{1}{4} \times (s+1)d=\frac{2s+1}{2} d,
\end{align*} 
a contradiction.

If $t\ge 6$, then applying $d\ge \frac{1}{s+1}\sum \limits_{i=1}^{t-4}i|A_i|$, we reformulate \eqref{eq:1-2} as follows:
\begin{align*}
    (t-2)|A_1|+2|A_2|+(t-2)|A_3|&\le \bigg((s+1)(t-2)-2\bigg)d \\&=\bigg((2s+2)t-(8s+10)\bigg)d-\bigg((s+1)t-(6s+6)\bigg)d \\
    &\le \bigg((2s+2)t-(8s+10)\bigg)d - (t-6)\sum\limits_{i=1}^{t-4}i|A_i|.
\end{align*}
This implies
\begin{align*}
    \bigg((2s+2)t-(8s+10)\bigg)d \ge (2t-8)|A_1|+(2t-10)|A_2|+(4t-20)|A_3|+(t-6)\sum\limits_{i=4}^{t-4}i|A_i|.
\end{align*}
Since $t\geq 6$ and  $(t-6)i\geq 2(t-3-i)$ for $i\geq 4$, we further have
\begin{align*}
   \bigg ((s+1)t-(4s+5)\bigg)d &\ge (t-4)|A_1|+(t-5)|A_2|+(t-6)|A_3|+\sum\limits_{i=4}^{t-4}(t-3-i)|A_i|\\
              & =\sum \limits_{i=1}^{t-4}(t-3-i)|A_i|.
\end{align*}
Thus,
\begin{align*}
    k &= \bigg \lfloor \frac{(s+1)t-(3s+4)}{t-3} d \bigg \rfloor +1 =\sum_{i\ge 1}|A_i|   \\
    &\le \frac{1}{t-3}\sum_{i=1}^{t-4} (t-3-i)|A_i| +\sum\limits_{i\ge 1}\frac{i}{t-3}|A_i| \\
    &\le \frac{1}{t-3} \times \bigg((s+1)t-(4s+5) \bigg)d +\frac{1}{t-3} \times (s+1)d
    =\frac{(s+1)t-(3s+4)}{t-3}d,
\end{align*}
a contradiction.

\item $s=1$.

By Lemma \ref{lem:all-used-bounds}, if $ \mathbf{X}(H) $ is symmetric (in this situation we have $H\neq F_t$ since $t\neq \ell + 3$), then
\begin{align}
    \notag d &\le \sum\limits_{c\in [k] }p(c) = \sum\limits_{i\ge 2}\sum\limits_{c\in A_i}p(c) = \sum\limits_{c\in A_2}p(c)+\sum\limits_{c\in A_3}p(c)+\sum\limits_{i\ge 4}\sum\limits_{c\in A_i}p(c)
    \\
   \notag  &\le (t-3)|A_2|+(t-1)|A_3|+\frac{t-2}{2}\sum\limits_{i\ge 4} i|A_i| \\
   \notag &= (t-3)|A_2|+(t-1)|A_3|+\frac{t-2}{2}(2d-|A_1|-2|A_2|-3|A_3|)\\
   \notag &= -\bigg(\frac{t-2}{2}|A_1|+|A_2|+\frac{t-4}{2}|A_3| \bigg)+(t-2)d,
\end{align}
and if $ \mathbf{X}(H) $ is asymmetric, then
\begin{align}
    \notag 2d &\le \sum\limits_{c\in [k] }p(c) = \sum\limits_{i\ge 2}\sum\limits_{c\in A_i}p(c) = \sum\limits_{c\in A_2}p(c)+\sum\limits_{c\in A_3}p(c)+\sum\limits_{i\ge 4}\sum\limits_{c\in A_i}p(c)
    \\
   \notag  &\le 2(t-3)|A_2|+2(t-1)|A_3|+(t-2)\sum\limits_{i\ge 4} i|A_i| \\
   \notag &= 2(t-3)|A_2|+2(t-1)|A_3|+(t-2)(2d-|A_1|-2|A_2|-3|A_3|)\\
   \notag &= -\bigg((t-2)|A_1|+2|A_2|+(t-4)|A_3| \bigg)+2(t-2)d.
\end{align}
In either case, we have 
\begin{align} \label{eq:1-3}
    (t-2)|A_1|+2|A_2|+(t-4)|A_3| \le 2(t-3)d.
\end{align}

If $t=5$, then \eqref{eq:1-3} implies 
\begin{align*}
    3|A_1|+2|A_2|+|A_3| \le 4d,
\end{align*}
Thus,
\begin{align*}
    k =\left\lfloor \frac{3}{2}d \right\rfloor + 1 =\sum_{i\geq 1}|A_i| &\le \frac{1}{4}(3|A_1|+2|A_2|+|A_3|) + \sum\limits_{i\ge1} \frac{i}{4}|A_i| \\
    &\le \frac{1}{4} \times 4d + \frac{1}{4} \times 2d =\frac{3}{2}d,
\end{align*}
a contradiction.

If $t\geq 6$, then applying $d\ge \frac{1}{2}\sum\limits_{i=1}^{t-4} i|A_i|$, we reformulate \eqref{eq:1-3} as follows:
\begin{align*}
    (t-2)|A_1|+2|A_2|+(t-4)|A_3| \le (2t-6)d &=  (4t-18)d-(2t-12)d \\
         &\le (4t-18)d-(t-6)\sum\limits_{i=1}^{t-4} i|A_i|.  
\end{align*}
This implies
\begin{align*}
    (4t-18)d  &\geq (2t-8)|A_1|+(2t-10)|A_2|+(4t-22)|A_3|+\sum_{i=4}^{t-4} (t-6)i|A_i|.
\end{align*}
Since $t\geq 6$ and $(t-6)i\ge t-3-i$ for $i\geq 4$, we further have
\begin{align*}
    (2t-9)d  &\geq (t-4)|A_1|+(t-5)|A_2|+(t-6)|A_3|+\sum_{i=4}^{t-4}(t-3-i)|A_i|   \\        
            & =\sum\limits_{i=1}^{t-4} (t-3-i)|A_i|.
\end{align*}
Thus,
\begin{align*}
    k =\bigg\lfloor \frac{2t-7}{t-3}d  \bigg\rfloor +1 =\sum_{i\geq 1}|A_i| 
    &\le \frac{1}{t-3}\sum_{i=1}^{t-4} (t-i-3)|A_i| +\sum\limits_{i\ge 1} \frac{i}{t-3}|A_i| \\
    &\le \frac{1}{t-3} \times (2t-9)d + \frac{1}{t-3} \times 2d =\frac{2t-7}{t-3}d,
\end{align*}
a contradiction.
\end{enumerate}

In each of these cases, we arrive at contradictions, thereby concluding the proof.
\end{proof}

\section{Extremal graphs} \label{sec:lowerbound}

\begin{thm} \label{thm:extremal-1}
For integers $d,s,t$ such that $d\ge t-1$ and $t\geq 4$, we have
    $${\rm rb}(W_d ,F_t)\geq \left\lfloor \frac{2t-5}{t-2}d \right\rfloor +1.$$
\end{thm}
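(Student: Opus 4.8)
The plan is to prove the lower bound by exhibiting an explicit edge-coloring of $W_d$ that uses exactly $\left\lfloor\frac{2t-5}{t-2}d\right\rfloor$ colors yet contains no rainbow $F_t$. Since ${\rm rb}(W_d,F_t)$ is by definition the least number of colors forcing a rainbow $F_t$, such a coloring witnesses ${\rm rb}(W_d,F_t)>\left\lfloor\frac{2t-5}{t-2}d\right\rfloor$, which is exactly the claimed inequality. So the whole task reduces to a construction plus a color count.

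First I would pin down all copies of $F_t$ in $W_d$. Writing the hub as $u$ and the rim as $v_0v_1\cdots v_{d-1}$ (indices mod $d$), any copy whose center has degree $t-1\ge 4$ must be centered at $u$, since every rim vertex has degree $3$ in $W_d$; such a copy consists of the $t-1$ consecutive spokes $uv_i,\dots,uv_{i+t-2}$ together with the $t-2$ rim edges joining those vertices. There are exactly $d$ of these, call them $F^{(i)}$ for $0\le i\le d-1$. (When $t=4$ the center may also be a rim vertex, a case I flag to revisit at the end.)

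Then I build the coloring. I give all $2d$ edges distinct fresh colors except that I merge $m:=\lceil d/(t-2)\rceil$ disjoint adjacent-spoke pairs into monochromatic pairs, namely $\{uv_{k(t-2)},\,uv_{k(t-2)+1}\}$ for $k=1,\dots,m$, placed cyclically. The key observation is that a monochromatic pair of spokes $uv_a,uv_{a+1}$ lies in precisely the $t-2$ consecutive copies $F^{(a-(t-3))},\dots,F^{(a)}$; spacing the pairs $t-2$ apart therefore tiles all $d$ copies, so every $F^{(i)}$ contains a repeated color and is non-rainbow. The color total is $2d-m=2d-\lceil d/(t-2)\rceil=\left\lfloor\bigl(2-\tfrac{1}{t-2}\bigr)d\right\rfloor=\left\lfloor\frac{2t-5}{t-2}d\right\rfloor$, using the identity $\lfloor -x\rfloor=-\lceil x\rceil$; this is precisely the target count.

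I expect the main obstacle to be the bookkeeping at the seam when $t-2\nmid d$: the last block of covered copies must wrap around and overlap the first without spending a color beyond the $m$ already budgeted, and the adjacent-spoke pairs must remain edge-disjoint near the wrap (or be replaced there by a single monochromatic triple of consecutive spokes), which is where the hypothesis $d\ge t-1$ guarantees enough room. The second delicate point is the residual $t=4$ case, since then a rim-centered $F_4$ uses the three consecutive spokes $uv_{i-1},uv_i,uv_{i+1}$; I would check that the construction leaves a monochromatic pair among every three consecutive spokes — which holds because the merged pairs partition the spokes into adjacent blocks, so any window of three consecutive spokes contains a full block — thereby killing the rim-centered copies as well and completing the verification.
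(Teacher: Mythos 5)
Your proposal is correct and is essentially the paper's own proof: both exhibit the same extremal coloring — all rim edges receive distinct colors, and monochromatic adjacent-spoke pairs are placed every $t-2$ positions so that every copy of $F_t$ (which consists of $t-1$ consecutive spokes together with the rim edges between them) picks up a repeated color — and both arrive at the identical color count $2d-\left\lceil \tfrac{d}{t-2}\right\rceil=\left\lfloor \tfrac{2t-5}{t-2}d\right\rfloor$. The seam issue you flag is genuine but harmless: the paper resolves the residue case $d\equiv 1\pmod{t-2}$ by letting the single leftover spoke repeat its predecessor's color, while your monochromatic-triple fix covers the same copies and saves the same two colors, so either resolution leaves the count, and hence the bound, unchanged.
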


\begin{proof}
    For $W_d $ with hub $u$ and boundary vertices $v_1, v_2, \cdots, v_d$, we partition the spokes $uv_1, uv_2, \cdots, uv_d$ into $q$ groups starting from $uv_1$, with each group containing consecutive $t - 2$ spokes (the last group may have fewer than $t - 2$ spokes).

For its $m$-th group ($1\leq m\leq q - 1$) of spokes 
$uv_{(m - 1)(t - 2)+1},uv_{(m - 1)(t - 2)+2},\cdots,uv_{m(t - 2)}$,
color the first two spokes $uv_{(m - 1)(t - 2)+1}$ and $uv_{(m - 1)(t - 2)+2}$ with color $d+(m - 1)(t - 3)+1$, and for $3\leq i \leq t - 2$, color $uv_{(m - 1)(t - 2)+i}$ with color $d+(m - 1)(t - 3)+(i - 1)$. Thus, the last spoke $uv_{m(t - 2)}$ of the $m$-th group is colored with $d+m(t - 3)$. For the last group (the $q$-th group):
\begin{itemize}
    \item if the structure contains at least two spokes, then color the first two spokes \( u v_{(q - 1)(t - 2)+1} \) and \( uv_{(q - 1)(t - 2)+2} \) with color \( d + (q - 1)(t - 3) + 1 \). For the remaining spokes, assign colors by incrementing the previous spoke's color by one;
    \item if the structure contains exactly one spoke $uv_d$, then color it with the same color as $uv_{d - 1}$, which is $d+(q - 1)(t - 3)$.
\end{itemize}
 The purpose of these rules is to ensure that among any $t - 1$ consecutive spokes, there are at least two spokes with the same color.
 Therefore, while coloring the edges of each rim edge $v_iv_{i + 1}$ of $W_d $ with color $i$, we complete an edge coloring of $W_d $ avoiding the rainbow $F_t$.
 Since $d=(q-1)(t-2)+p$ where $1\leq p\leq t-2$, and this coloring uses 
 \begin{align*}
   & d+  (q-1)(t-3)+\left(d-(q-1)(t-2)-1\right)\\=&2d-sq
   = 
\begin{cases} 
2d-\left\lfloor \frac{d}{t - 2} \right\rfloor+1= \left\lfloor \frac{2t-5}{t-2}d \right\rfloor, & \text{if } p < t - 2, \\
2d-\frac{d}{t - 2}  = \frac{2t-5}{t-2}d, & \text{if } p = t - 2
\end{cases} 
\end{align*}
colors, we conclude the result of this theorem. 
\end{proof}

\begin{thm} \label{thm:extremal-2}
For integers $d,t,s,\ell$ such that $d\ge t-1$ and $t\ge \max\{5,\ell+4\}$, we have
\begin{align*}
     {\rm rb}(W_d ,\theta_{t,\ell})\geq \bigg\lfloor\frac{2t-7}{t-3}d\bigg\rfloor +1,
\end{align*}
and furthermore, if $s=2$ and $t\geq 6$, or $s\geq 3$ and $t\geq 7$, then
\begin{align*}
     {\rm rb}(W_d(s),F_t) \geq \bigg\lfloor\frac{(s+1)t-(3s+4)}{t-3}d\bigg\rfloor +1.
\end{align*}
\end{thm}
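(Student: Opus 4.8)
The plan is to establish both inequalities through a single explicit construction, exploiting the observation that the $t$-cycle $C_t$ is the boundary cycle of every member of $\Theta_{t,\ell}$ and of $F_t=\theta_{t,t-3}$. Since any rainbow copy of $\theta_{t,\ell}$ (respectively $F_t$) contains a rainbow copy of its boundary $C_t$, it suffices to exhibit, for each host graph, an edge-coloring with the asserted number of colors that contains \emph{no} rainbow $C_t$: such a coloring automatically contains no rainbow $\theta_{t,\ell}$ and no rainbow $F_t$, and therefore forces ${\rm rb}\ge(\text{number of colors})+1$ straight from the definition. This reduces the two assertions to one uniform coloring scheme.

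The coloring I would use assigns pairwise distinct colors to all spokes (so $d$ colors in $W_d$ and $sd$ colors in $W_d(s)$) and colors the $d$ rim edges from a disjoint palette in a periodic cyclic pattern engineered so that every $t-2$ cyclically consecutive rim edges contain two edges of the same color; equivalently, the longest rainbow run of consecutive rim edges has length at most $t-3$. Concretely I would partition the rim into consecutive blocks of $t-3$ edges, repeat one color on two edges of each block while giving the remaining edges fresh colors, and treat the final, possibly shorter, block exactly as the truncated group is treated in the proof of Theorem~\ref{thm:extremal-1}. To verify the absence of a rainbow $C_t$, I would argue structurally about where a $C_t$ can lie. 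In $W_d$ every $C_t$ passes through the hub with degree two, so it consists of two spokes together with a $t-2$-edge boundary arc, i.e.\ $t-2$ consecutive rim edges (the only exception, the full boundary cycle when $d=t$, contains even more rim edges); by construction these contain a repeated color, so the $C_t$ is not rainbow. For $W_d(s)$ with $s\ge2$ under the stated hypotheses, Lemma~\ref{lem:lucky7} guarantees that every $F_t$ uses exactly one hub, namely its centre, so the boundary cycle of that $F_t$ again occupies precisely $t-2$ consecutive rim edges and is non-rainbow.

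Finally I would count the colors. Each block of $t-3$ rim edges saves exactly one color, so the rim uses $d-\lceil d/(t-3)\rceil$ colors, yielding $d+\bigl(d-\lceil d/(t-3)\rceil\bigr)=2d-\lceil d/(t-3)\rceil=\lfloor\frac{2t-7}{t-3}d\rfloor$ in $W_d$ and $sd+\bigl(d-\lceil d/(t-3)\rceil\bigr)=(s+1)d-\lceil d/(t-3)\rceil=\lfloor\frac{(s+1)t-(3s+4)}{t-3}d\rfloor$ in $W_d(s)$, where the last identity uses $(s+1)t-(3s+4)=(s+1)(t-3)-1$. The main obstacle is twofold. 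The conceptual crux is the structural guarantee that every relevant copy occupies $t-2$ consecutive rim edges: for $W_d(s)$ this can fail without Lemma~\ref{lem:lucky7}, since short cycles alternating between hubs and boundary vertices need not use any block of consecutive rim edges, so forcing a single hub is essential. The remaining difficulty is purely the floor bookkeeping together with the cyclic wrap-around at the seam of the last, possibly truncated block—one must ensure simultaneously that the number of saved colors equals $\lceil d/(t-3)\rceil$ and that the rainbow-run bound $t-3$ survives across the seam; this is routine and mirrors the case analysis already performed for Theorem~\ref{thm:extremal-1}.
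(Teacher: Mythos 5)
Your proposal is correct and takes essentially the same route as the paper's own proof: the identical construction (all spokes receive distinct colors; the rim is partitioned into blocks of $t-3$ consecutive edges with one repeated color per block, the truncated last block handled as in Theorem~\ref{thm:extremal-1}), the same structural verification (every $C_t$ in $W_d$ uses two spokes plus $t-2$ consecutive rim edges, and Lemma~\ref{lem:lucky7} forces every $F_t$ in $W_d(s)$ to be centered at a hub), and the same color count $(s+1)d-\left\lceil d/(t-3)\right\rceil=\left\lfloor\frac{(s+1)t-(3s+4)}{t-3}d\right\rfloor$.
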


\begin{proof}
    For $W_d (s)$ with hub vertices $u_1,u_2,\ldots,u_s$ and boundary vertices $v_1, v_2, \cdots, v_d$, we partition the rim edges $v_1v_2, v_2v_3, \cdots, v_{d-1}v_d, v_dv_1$ into $q$ groups starting from $v_1v_2$, with each group containing consecutive $t - 3$ rim edges (the last group may have fewer than $t - 3$ rim edges). 
 
For the $m$-th group ($1\leq m\leq q - 1$), where the rim edges are $v_{(m - 1)(t - 3)+1}v_{(m - 1)(t - 3)+2}$,
$v_{(m - 1)(t - 3)+2} \allowbreak v_{(m - 1)(t - 3)+3},\cdots,v_{m(t - 3)}v_{m(t-3)+1}$, color the first two rim edges $v_{(m - 1)(t - 3)+1}v_{(m - 1)(t - 3)+2}$ and $v_{(m - 1)(t - 3)+2} \allowbreak v_{(m - 1)(t - 3)+3}$ with color $sd+(m - 1)(t - 4)+1$, and for $3\leq i \leq t - 3$, color $v_{(m - 1)(t - 3)+i}v_{(m-1)(t-3)+i+1}$ with color $sd+(m - 1)(t - 4)+(i - 1)$. Thus, the last rim edge $v_{m(t - 3)}v_{m(t-3)+1}$ of the $m$-th group is colored with $sd+m(t - 4)$. For the last group (the $q$-th group):
\begin{itemize}
    \item if it contains at least two rim edges, then color the first two rim edges $v_{(q - 1)(t - 3)+1}v_{(q - 1)(t - 3)+2}$ and $v_{(q - 1)(t - 3)+2}v_{(q - 1)(t - 3)+3}$ with color $sd+(q - 1)(t - 4)+1$, and the remaining rim edges with colors that are incremented by 1 one-by-one compared to the previous rim edge in the group;
    \item if it contains exactly one rim edge $v_dv_1$, then color it with the same color as $v_{d - 1}v_d$, which is $sd+(q - 1)(t - 4)$.
\end{itemize}
 The purpose of these rules is to ensure that among any $t - 2$ consecutive rim edges, there are at least two rim edges with the same color.
 Therefore, while coloring the edges of each spoke $u_av_i$ of $W_{d}(s)$ with color $(a-1)d+i$ for each $a\in [s]$, we complete a coloring of $W_d(s)$ such that
 \begin{itemize}
     \item there is no rainbow $F_t$ if $s=2$ and $t\geq 6$, or $s\geq 3$ and $t\geq 7$, and furthermore;
     \item there is no rainbow $\theta_{t,\ell}$ if $s=1$.
 \end{itemize}
 Since $d=(q-1)(t-3)+p$ where $1\leq p\leq t-3$, and this coloring uses
 \begin{align*}
      & sd+ (q-1)(t-4)+\left(d-(q-1)(t-3)-1\right)\\
      =& (s+1)d-q=  
\begin{cases} 
(s+1)d-\left\lfloor \frac{d}{t - 3} \right\rfloor-1= \left\lfloor \frac{(s+1)t-(3s+4)}{t-3}d \right\rfloor, & \text{if } p < t - 3, \\
(s+1)d-\left\lfloor \frac{d}{t - 3} \right\rfloor= \frac{(s+1)t-(3s+4)}{t-3}d, & \text{if } p = t - 3
\end{cases}
 \end{align*}
colors, we conclude the result of the theorem.
\end{proof}

In Figure~\ref{fig:extremal}(a), we present a 13-edge-coloring of $W_8$ following the construction in Theorem~\ref{thm:extremal-1}, ensuring no rainbow $F_5$ subgraph exists. Similarly, in Figure~\ref{fig:extremal}(b), we apply the proof technique in Theorem~\ref{thm:extremal-2} to construct a 21-edge-coloring of $W_8(2)$ that avoids rainbow $F_6$ subgraphs. However, this coloring inadvertently contains a rainbow $F_5$ (highlighted in gray), implying that this is not an extremal graph for $\mathrm{rb}(W_8(2),F_5)$. Actually, as shown in Section~\ref{sec:final}, we will prove $\mathrm{rb}(W_8(2),F_5) \le 19$. Thus, any 21-edge-coloring of $W_8(2)$ must contain a rainbow $F_5$ subgraph.

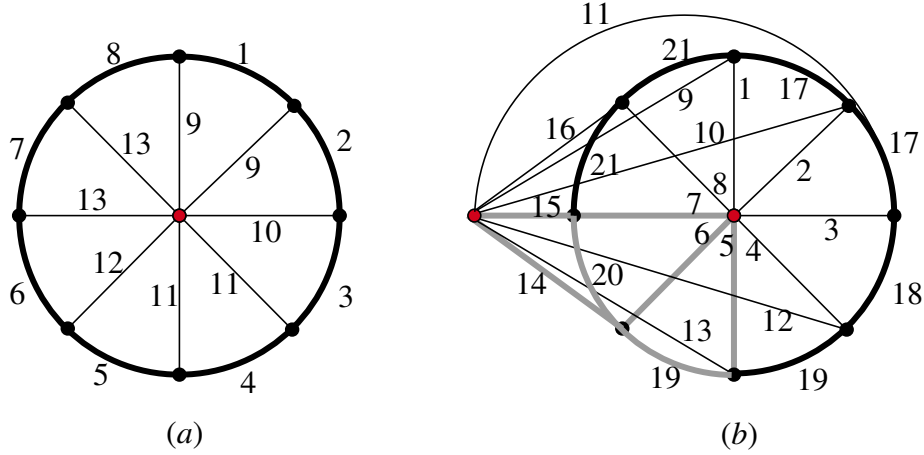
\begin{figure}[h]
    \centering

\tikzset{every picture/.style={line width=0.6pt}} 

\begin{tikzpicture}[x=0.6pt,y=0.6pt,yscale=-1,xscale=1]

\draw  [color={rgb, 255:red, 0; green, 0; blue, 0 }  ,draw opacity=1 ][line width=2.25]  (30.67,311) .. controls (30.67,255.77) and (75.44,211) .. (130.67,211) .. controls (185.9,211) and (230.67,255.77) .. (230.67,311) .. controls (230.67,366.23) and (185.9,411) .. (130.67,411) .. controls (75.44,411) and (30.67,366.23) .. (30.67,311) -- cycle ;
\draw    (130.67,311) -- (230.67,311) ;
\draw  [fill={rgb, 255:red, 0; green, 0; blue, 0 }  ,fill opacity=1 ] (126.67,311) .. controls (126.67,308.79) and (128.46,307) .. (130.67,307) .. controls (132.88,307) and (134.67,308.79) .. (134.67,311) .. controls (134.67,313.21) and (132.88,315) .. (130.67,315) .. controls (128.46,315) and (126.67,313.21) .. (126.67,311) -- cycle ;
\draw    (130.67,311) -- (130.67,411) ;
\draw    (130.67,211) -- (130.67,311) ;
\draw    (30.67,311) -- (130.67,311) ;
\draw    (61,240) -- (130.67,311) ;
\draw    (130.67,311) -- (201,382.67) ;
\draw [fill={rgb, 255:red, 208; green, 2; blue, 27 }  ,fill opacity=1 ]   (130.67,311) -- (133.03,308.72) -- (202.33,242) ;
\draw    (130.67,311) -- (61,382) ;
\draw  [fill={rgb, 255:red, 0; green, 0; blue, 0 }  ,fill opacity=1 ] (126.67,211) .. controls (126.67,208.79) and (128.46,207) .. (130.67,207) .. controls (132.88,207) and (134.67,208.79) .. (134.67,211) .. controls (134.67,213.21) and (132.88,215) .. (130.67,215) .. controls (128.46,215) and (126.67,213.21) .. (126.67,211) -- cycle ;
\draw  [fill={rgb, 255:red, 0; green, 0; blue, 0 }  ,fill opacity=1 ] (57,240) .. controls (57,237.79) and (58.79,236) .. (61,236) .. controls (63.21,236) and (65,237.79) .. (65,240) .. controls (65,242.21) and (63.21,244) .. (61,244) .. controls (58.79,244) and (57,242.21) .. (57,240) -- cycle ;
\draw  [fill={rgb, 255:red, 0; green, 0; blue, 0 }  ,fill opacity=1 ] (26.67,311) .. controls (26.67,308.79) and (28.46,307) .. (30.67,307) .. controls (32.88,307) and (34.67,308.79) .. (34.67,311) .. controls (34.67,313.21) and (32.88,315) .. (30.67,315) .. controls (28.46,315) and (26.67,313.21) .. (26.67,311) -- cycle ;
\draw  [fill={rgb, 255:red, 0; green, 0; blue, 0 }  ,fill opacity=1 ] (57,382) .. controls (57,379.79) and (58.79,378) .. (61,378) .. controls (63.21,378) and (65,379.79) .. (65,382) .. controls (65,384.21) and (63.21,386) .. (61,386) .. controls (58.79,386) and (57,384.21) .. (57,382) -- cycle ;
\draw  [fill={rgb, 255:red, 0; green, 0; blue, 0 }  ,fill opacity=1 ] (126.67,411) .. controls (126.67,408.79) and (128.46,407) .. (130.67,407) .. controls (132.88,407) and (134.67,408.79) .. (134.67,411) .. controls (134.67,413.21) and (132.88,415) .. (130.67,415) .. controls (128.46,415) and (126.67,413.21) .. (126.67,411) -- cycle ;
\draw  [fill={rgb, 255:red, 0; green, 0; blue, 0 }  ,fill opacity=1 ] (197,382.67) .. controls (197,380.46) and (198.79,378.67) .. (201,378.67) .. controls (203.21,378.67) and (205,380.46) .. (205,382.67) .. controls (205,384.88) and (203.21,386.67) .. (201,386.67) .. controls (198.79,386.67) and (197,384.88) .. (197,382.67) -- cycle ;
\draw  [fill={rgb, 255:red, 0; green, 0; blue, 0 }  ,fill opacity=1 ] (226.67,311) .. controls (226.67,308.79) and (228.46,307) .. (230.67,307) .. controls (232.88,307) and (234.67,308.79) .. (234.67,311) .. controls (234.67,313.21) and (232.88,315) .. (230.67,315) .. controls (228.46,315) and (226.67,313.21) .. (226.67,311) -- cycle ;
\draw  [fill={rgb, 255:red, 0; green, 0; blue, 0 }  ,fill opacity=1 ] (198.33,242) .. controls (198.33,239.79) and (200.12,238) .. (202.33,238) .. controls (204.54,238) and (206.33,239.79) .. (206.33,242) .. controls (206.33,244.21) and (204.54,246) .. (202.33,246) .. controls (200.12,246) and (198.33,244.21) .. (198.33,242) -- cycle ;
\draw    (476.67,311) -- (576.67,311) ;
\draw  [fill={rgb, 255:red, 0; green, 0; blue, 0 }  ,fill opacity=1 ] (472.67,311) .. controls (472.67,308.79) and (474.46,307) .. (476.67,307) .. controls (478.88,307) and (480.67,308.79) .. (480.67,311) .. controls (480.67,313.21) and (478.88,315) .. (476.67,315) .. controls (474.46,315) and (472.67,313.21) .. (472.67,311) -- cycle ;
\draw [color={rgb, 255:red, 155; green, 155; blue, 155 }  ,draw opacity=1 ][line width=2.25]    (476.67,311) -- (476.67,411) ;
\draw    (476.67,211) -- (476.67,311) ;
\draw [color={rgb, 255:red, 155; green, 155; blue, 155 }  ,draw opacity=1 ][line width=2.25]    (376.67,311) -- (476.67,311) ;
\draw    (407,240) -- (476.67,311) ;
\draw    (476.67,311) -- (547,382.67) ;
\draw    (476.67,311) -- (548.33,242) ;
\draw [color={rgb, 255:red, 155; green, 155; blue, 155 }  ,draw opacity=1 ][line width=2.25]    (476.67,311) -- (407,382) ;
\draw  [fill={rgb, 255:red, 0; green, 0; blue, 0 }  ,fill opacity=1 ] (472.67,211) .. controls (472.67,208.79) and (474.46,207) .. (476.67,207) .. controls (478.88,207) and (480.67,208.79) .. (480.67,211) .. controls (480.67,213.21) and (478.88,215) .. (476.67,215) .. controls (474.46,215) and (472.67,213.21) .. (472.67,211) -- cycle ;
\draw  [fill={rgb, 255:red, 0; green, 0; blue, 0 }  ,fill opacity=1 ] (403,240) .. controls (403,237.79) and (404.79,236) .. (407,236) .. controls (409.21,236) and (411,237.79) .. (411,240) .. controls (411,242.21) and (409.21,244) .. (407,244) .. controls (404.79,244) and (403,242.21) .. (403,240) -- cycle ;
\draw  [fill={rgb, 255:red, 0; green, 0; blue, 0 }  ,fill opacity=1 ] (372.67,311) .. controls (372.67,308.79) and (374.46,307) .. (376.67,307) .. controls (378.88,307) and (380.67,308.79) .. (380.67,311) .. controls (380.67,313.21) and (378.88,315) .. (376.67,315) .. controls (374.46,315) and (372.67,313.21) .. (372.67,311) -- cycle ;
\draw  [fill={rgb, 255:red, 0; green, 0; blue, 0 }  ,fill opacity=1 ] (403,382) .. controls (403,379.79) and (404.79,378) .. (407,378) .. controls (409.21,378) and (411,379.79) .. (411,382) .. controls (411,384.21) and (409.21,386) .. (407,386) .. controls (404.79,386) and (403,384.21) .. (403,382) -- cycle ;
\draw  [fill={rgb, 255:red, 0; green, 0; blue, 0 }  ,fill opacity=1 ] (472.67,411) .. controls (472.67,408.79) and (474.46,407) .. (476.67,407) .. controls (478.88,407) and (480.67,408.79) .. (480.67,411) .. controls (480.67,413.21) and (478.88,415) .. (476.67,415) .. controls (474.46,415) and (472.67,413.21) .. (472.67,411) -- cycle ;
\draw  [fill={rgb, 255:red, 0; green, 0; blue, 0 }  ,fill opacity=1 ] (543,382.67) .. controls (543,380.46) and (544.79,378.67) .. (547,378.67) .. controls (549.21,378.67) and (551,380.46) .. (551,382.67) .. controls (551,384.88) and (549.21,386.67) .. (547,386.67) .. controls (544.79,386.67) and (543,384.88) .. (543,382.67) -- cycle ;
\draw  [fill={rgb, 255:red, 0; green, 0; blue, 0 }  ,fill opacity=1 ] (572.67,311) .. controls (572.67,308.79) and (574.46,307) .. (576.67,307) .. controls (578.88,307) and (580.67,308.79) .. (580.67,311) .. controls (580.67,313.21) and (578.88,315) .. (576.67,315) .. controls (574.46,315) and (572.67,313.21) .. (572.67,311) -- cycle ;
\draw  [fill={rgb, 255:red, 0; green, 0; blue, 0 }  ,fill opacity=1 ] (544.33,242) .. controls (544.33,239.79) and (546.12,238) .. (548.33,238) .. controls (550.54,238) and (552.33,239.79) .. (552.33,242) .. controls (552.33,244.21) and (550.54,246) .. (548.33,246) .. controls (546.12,246) and (544.33,244.21) .. (544.33,242) -- cycle ;
\draw [color={rgb, 255:red, 155; green, 155; blue, 155 }  ,draw opacity=1 ][line width=2.25]    (310.71,311) -- (376.67,311) ;
\draw    (407,240) -- (310.71,311) ;
\draw [color={rgb, 255:red, 155; green, 155; blue, 155 }  ,draw opacity=1 ][line width=2.25]    (310.71,311) -- (407,382) ;
\draw    (310.71,311) -- (476.67,411) ;
\draw    (310.71,311) -- (476.67,211) ;
\draw    (310.71,311) -- (548.33,242) ;
\draw    (310.71,311) -- (547,382.67) ;
\draw  [fill={rgb, 255:red, 208; green, 2; blue, 27 }  ,fill opacity=1 ] (126.67,311) .. controls (126.67,308.79) and (128.46,307) .. (130.67,307) .. controls (132.88,307) and (134.67,308.79) .. (134.67,311) .. controls (134.67,313.21) and (132.88,315) .. (130.67,315) .. controls (128.46,315) and (126.67,313.21) .. (126.67,311) -- cycle ;
\draw  [fill={rgb, 255:red, 208; green, 2; blue, 27 }  ,fill opacity=1 ] (310.71,311) .. controls (310.71,308.79) and (312.5,307) .. (314.71,307) .. controls (316.92,307) and (318.71,308.79) .. (318.71,311) .. controls (318.71,313.21) and (316.92,315) .. (314.71,315) .. controls (312.5,315) and (310.71,313.21) .. (310.71,311) -- cycle ;
\draw  [fill={rgb, 255:red, 208; green, 2; blue, 27 }  ,fill opacity=1 ] (472.67,311) .. controls (472.67,308.79) and (474.46,307) .. (476.67,307) .. controls (478.88,307) and (480.67,308.79) .. (480.67,311) .. controls (480.67,313.21) and (478.88,315) .. (476.67,315) .. controls (474.46,315) and (472.67,313.21) .. (472.67,311) -- cycle ;
\draw  [draw opacity=0] (314.98,308.6) .. controls (318.27,241.23) and (373.46,186.78) .. (442.47,185.02) .. controls (514.76,183.17) and (574.85,239.86) .. (576.68,311.64) .. controls (576.72,313.17) and (576.73,314.71) .. (576.72,316.23) -- (445.78,314.97) -- cycle ; \draw   (314.98,308.6) .. controls (318.27,241.23) and (373.46,186.78) .. (442.47,185.02) .. controls (514.76,183.17) and (574.85,239.86) .. (576.68,311.64) .. controls (576.72,313.17) and (576.73,314.71) .. (576.72,316.23) ;  
\draw  [draw opacity=0][line width=2.25]  (475.22,411.22) .. controls (420.66,410.44) and (376.67,365.97) .. (376.67,311.23) .. controls (376.67,311.15) and (376.67,311.08) .. (376.67,311) -- (476.67,311.23) -- cycle ; \draw  [color={rgb, 255:red, 155; green, 155; blue, 155 }  ,draw opacity=1 ][line width=2.25]  (475.22,411.22) .. controls (420.66,410.44) and (376.67,365.97) .. (376.67,311.23) .. controls (376.67,311.15) and (376.67,311.08) .. (376.67,311) ;  
\draw  [draw opacity=0][line width=2.25]  (376.67,311) .. controls (376.66,310.75) and (376.66,310.5) .. (376.66,310.25) .. controls (376.66,255.02) and (421.44,210.25) .. (476.66,210.25) .. controls (531.89,210.25) and (576.66,255.02) .. (576.66,310.25) .. controls (576.66,365.48) and (531.89,410.25) .. (476.66,410.25) .. controls (476.47,410.25) and (476.27,410.25) .. (476.07,410.25) -- (476.66,310.25) -- cycle ; \draw  [line width=2.25]  (376.67,311) .. controls (376.66,310.75) and (376.66,310.5) .. (376.66,310.25) .. controls (376.66,255.02) and (421.44,210.25) .. (476.66,210.25) .. controls (531.89,210.25) and (576.66,255.02) .. (576.66,310.25) .. controls (576.66,365.48) and (531.89,410.25) .. (476.66,410.25) .. controls (476.47,410.25) and (476.27,410.25) .. (476.07,410.25) ;  

\draw (164,200.32) node [anchor=north west][inner sep=0.75pt]    {$1$};
\draw (227,255.32) node [anchor=north west][inner sep=0.75pt]    {$2$};
\draw (228,353.32) node [anchor=north west][inner sep=0.75pt]    {$3$};
\draw (167,407.32) node [anchor=north west][inner sep=0.75pt]    {$4$};
\draw (75,402.32) node [anchor=north west][inner sep=0.75pt]    {$5$};
\draw (23,351.32) node [anchor=north west][inner sep=0.75pt]    {$6$};
\draw (23,260.32) node [anchor=north west][inner sep=0.75pt]    {$7$};
\draw (83,200.32) node [anchor=north west][inner sep=0.75pt]    {$8$};
\draw (502,222.32) node [anchor=north west][inner sep=0.75pt]    {$17$};
\draw (571,256.32) node [anchor=north west][inner sep=0.75pt]    {$17$};
\draw (573,351.32) node [anchor=north west][inner sep=0.75pt]    {$18$};
\draw (514,405.32) node [anchor=north west][inner sep=0.75pt]    {$19$};
\draw (422,404.32) node [anchor=north west][inner sep=0.75pt]    {$19$};
\draw (386,339.32) node [anchor=north west][inner sep=0.75pt]    {$20$};
\draw (385,270.32) node [anchor=north west][inner sep=0.75pt]    {$21$};
\draw (430,198.32) node [anchor=north west][inner sep=0.75pt]    {$21$};
\draw (133,244.32) node [anchor=north west][inner sep=0.75pt]    {$9$};
\draw (170,272.32) node [anchor=north west][inner sep=0.75pt]    {$9$};
\draw (172.67,312.4) node [anchor=north west][inner sep=0.75pt]    {$10$};
\draw (147.33,345.07) node [anchor=north west][inner sep=0.75pt]    {$11$};
\draw (109.67,352.4) node [anchor=north west][inner sep=0.75pt]    {$11$};
\draw (74.33,332.07) node [anchor=north west][inner sep=0.75pt]    {$12$};
\draw (64.33,292.07) node [anchor=north west][inner sep=0.75pt]    {$13$};
\draw (91.67,257.4) node [anchor=north west][inner sep=0.75pt]    {$13$};
\draw (477,222.32) node [anchor=north west][inner sep=0.75pt]    {$1$};
\draw (513.67,274.32) node [anchor=north west][inner sep=0.75pt]    {$2$};
\draw (531,310.99) node [anchor=north west][inner sep=0.75pt]    {$3$};
\draw (482.33,324.32) node [anchor=north west][inner sep=0.75pt]    {$4$};
\draw (465.67,320.99) node [anchor=north west][inner sep=0.75pt]    {$5$};
\draw (450,313.32) node [anchor=north west][inner sep=0.75pt]    {$6$};
\draw (445.33,294.65) node [anchor=north west][inner sep=0.75pt]    {$7$};
\draw (461.33,282.32) node [anchor=north west][inner sep=0.75pt]    {$8$};
\draw (440,229.65) node [anchor=north west][inner sep=0.75pt]    {$9$};
\draw (449.33,250.32) node [anchor=north west][inner sep=0.75pt]    {$10$};
\draw (378.67,175.65) node [anchor=north west][inner sep=0.75pt]    {$11$};
\draw (492,368.32) node [anchor=north west][inner sep=0.75pt]    {$12$};
\draw (441.67,374.73) node [anchor=north west][inner sep=0.75pt]    {$13$};
\draw (338.33,345.07) node [anchor=north west][inner sep=0.75pt]    {$14$};
\draw (347.67,296.4) node [anchor=north west][inner sep=0.75pt]    {$15$};
\draw (356.33,247.4) node [anchor=north west][inner sep=0.75pt]    {$16$};
\draw (120.67,439.07) node [anchor=north west][inner sep=0.75pt]    {$( a)$};
\draw (466.67,439.1) node [anchor=north west][inner sep=0.75pt]    {$( b)$};

\end{tikzpicture}
   
    \caption{(a) a 13-edge-coloring of $W_8$ without any rainbow $F_5$; (b) a 21-edge-coloring of $W_8(2)$ without any rainbow $F_6$, but with a rainbow $F_5$ (highlighted in gray).}
    \label{fig:extremal}
\end{figure}

\section{Conclusions and remarks} \label{sec:final}

Combining the results of Theorems~\ref{thm:W-F}, \ref{thm:Ws-F-and-Wd-theta}, \ref{thm:extremal-1}, and \ref{thm:extremal-2}, we establish Theorem~\ref{thm:main-1}. Furthermore, the synthesis of Theorems~\ref{thm:Ws-F-and-Wd-theta} and \ref{thm:extremal-2} yields Theorem~\ref{thm:main-2}. As promised, we demonstrate that the conditions \(t \geq 6\) and \(t \geq 7\) specified in the second and third assertions of Theorem~\ref{thm:main-1} are critical, as evidenced by the following theorem, which is of independent interest.

\begin{thm}\label{thm5}
For integer $d\geq 5 $, we have
$\mathrm{rb}(W_d(2) ,F_5)\leq \left\lfloor \frac{33}{14}d \right\rfloor + 1$ and $\mathrm{rb}(W_d(3) ,F_6)\leq \left\lfloor \frac{55}{16}d \right\rfloor + 1$.
\end{thm}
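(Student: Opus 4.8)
The plan is to run the same weighted‑counting scheme used in Theorems~\ref{thm:W-F} and \ref{thm:Ws-F-and-Wd-theta}, but \emph{without} the simplification afforded by Lemma~\ref{lem:lucky7}. The pairs $(s,t)=(2,5)$ and $(3,6)$ are exactly the critical cases $t=s+3$ excluded from that lemma, so now a copy of $F_t$ in $W_d(s)$ need not contain a single hub. The first step is therefore to classify the copies of $F_t$ by their central (degree-$(t-1)$) vertex. If the centre is a hub (call such a copy \emph{Type I}), the other $t-1$ vertices are all adjacent to it, hence are $t-1$ consecutive boundary vertices, giving $sd$ Type I copies. If the centre is a boundary vertex $w$ (\emph{Type II}), then since $w$ has only $s+2=t-1$ neighbours in $W_d(s)$, the copy must use all $s$ hubs together with the two cycle-neighbours $w^-,w^+$ of $w$; its non-central edges then form a Hamiltonian path of the complete bipartite graph with parts $\{w^-,w^+\}$ and $\{u_1,\dots,u_s\}$.

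Second, I would count these exactly. For $(2,F_5)$ the relevant bipartite graph is $K_{2,2}$, which has $4$ Hamiltonian paths; thus there are $4d$ Type II copies and $6d$ copies of $F_5$ in all, and a short double count shows every edge of $W_d(2)$ lies in exactly $14$ copies of $F_5$. For $(3,F_6)$ the bipartite graph is $K_{2,3}$, with $6$ Hamiltonian paths, giving $6d$ Type II copies and $9d$ copies of $F_6$ in all, each spoke lying in $19$ and each rim edge in $24$ copies.

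Third, and this is the technical heart, I would bound, for a colour $c$ lying on exactly $i$ edges, the number $p(c,W_d(s),F_t)$ of copies meeting $c$ at least twice, now counting both types. The Type I part is already controlled by Lemma~\ref{lem:all-used-bounds}; the new contribution $p^{\mathrm{II}}(c)$ from Type II copies I would estimate by the device of Lemma~\ref{lem:p4+}, namely $2\,p^{\mathrm{II}}(c)\le\sum_{\varphi(e)=c}(\#\{\text{Type II copies through }e\})$, refined for $i=2,3$ by direct case analysis according to whether the colour-$c$ edges are spokes or rim edges and according to their cyclic positions. Combining the two contributions yields bounds of the form $p(c)\le P_2,P_3$ for $c\in A_2,A_3$ and $p(c)\le\frac{14}{2}i=7i$ (respectively $\le 12i$) for $c\in A_i$ with $i\ge4$.

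Finally I would feed these into the inequality $(\text{total number of }F_t)\le\sum_c p(c)\le\sum_i P_i|A_i|$ and eliminate the $A_i$ using $\sum_i i|A_i|=(s+1)d$ together with $k=\sum_i|A_i|$, exactly as in the proof of Theorem~\ref{thm:Ws-F-and-Wd-theta}, to reach $k\le\frac{33}{14}d$ and $k\le\frac{55}{16}d$ respectively. I expect the main obstacle to be sharpening the Type II pair and triple counts ($P_2,P_3$) enough that the final constant lands strictly below the value $\frac{3t-10}{t-3}d=\frac52 d$ (respectively $\frac{(s+1)t-(3s+4)}{t-3}d=\frac{11}{3}d$) predicted by the $F_t$-formula of Theorem~\ref{thm:main-1}: because one pair of edges can be shared by many Type II copies (for instance a spoke and an incident rim edge lie together in as many as ten copies of $F_5$), the case analysis must be carried out carefully, and the optimisation in the last step must exploit the fact that such dense pairs are comparatively inefficient for covering the full family of copies.
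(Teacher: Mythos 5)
Your plan is, in outline, exactly the paper's own proof of Theorem~\ref{thm5}: the paper also splits the copies into hub-centered and boundary-centered ones (your Types I and II, counted via Hamiltonian paths of $K_{2,2}$ and $K_{2,3}$), obtains the totals $6d$ and $9d$, records the incidence numbers $\alpha_5=\beta_5=14$, $\alpha_6=19$, $\beta_6=24$, bounds $p(c)\le\frac{\beta_m}{2}i$ for $c\in A_i$ with $i\ge 4$ by the handshake device of Lemma~\ref{lem:p4+}, inserts explicit pair and triple bounds for $c\in A_2$ and $c\in A_3$, and finishes with the weighted elimination you describe. The genuine gap in your write-up is that you never determine the decisive numbers $P_2$ and $P_3$: the constants $\frac{33}{14}$ and $\frac{55}{16}$ are manufactured precisely from those two values (the paper takes $P_2=9$, $P_3=15$ for $F_5$ in $W_d(2)$ and $P_2=15$, $P_3=27$ for $F_6$ in $W_d(3)$, see \eqref{eq-50}), so without them the stated inequalities are not proved.

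Moreover, the obstacle you flag at the end is not a removable technicality in the $F_5$ case; your own example defeats this route as it stands, and it equally defeats the paper's proof. Your count is correct: a spoke $u_1v_i$ and the incident rim edge $v_iv_{i+1}$ lie in $3+4+3=10$ copies of $F_5$ in $W_d(2)$ (three hub-centered copies at $u_1$; all four copies centered at $v_i$, because every center-to-neighbor edge lies in every copy with that center; and three of the four copies centered at $v_{i+1}$). Hence the true $P_2$ equals $10$, contradicting the bound $p^{(5)}(c)\le 9$ in the first line of \eqref{eq-50}, which the paper justifies only by checking the two-consecutive-spokes configuration (that one indeed gives $9$). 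Likewise the triple $\{v_{i-1}v_i,\;u_1v_i,\;v_iv_{i+1}\}$ is met in at least two edges by $10+10+8-2\cdot 6=16$ copies, so $P_3\ge 16>15$. Rerunning the elimination with the correct values $P_2=10$, $P_3=16$ yields only $7|A_1|+4|A_2|+5|A_3|\le 15d$, whose optimum (at $|A_1|=\frac{9}{5}d$, $|A_2|=\frac{3}{5}d$) gives $k\le\frac{12}{5}d$; this still proves $\mathrm{rb}(W_d(2),F_5)\le\left\lfloor \frac{12}{5}d\right\rfloor+1<\left\lfloor \frac{5}{2}d\right\rfloor+1$, so the purpose for which Theorem~\ref{thm5} is invoked (tightness of $t\ge 6$ in Theorem~\ref{thm:main-1}) survives, but the constant $\frac{33}{14}$ is unreachable by this argument; recovering it, if it is true at all, would require refining the scheme, e.g.\ splitting $A_2$ and $A_3$ by configuration type and exploiting that the extremal pairs consume rim edges, of which $W_d(2)$ has only $d$. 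By contrast, the $F_6$ half of your plan is sound: there the extremal pair is two consecutive rim edges ($15$ copies), a spoke plus incident rim edge lies in only $14$ copies, the triples stay within $27$, and the paper's constants carry the computation to $\left\lfloor \frac{55}{16}d\right\rfloor+1$ as claimed.
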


\begin{proof}
Let \( m \in \{5, 6\} \), \( k_5 = \left\lfloor \frac{33}{14}d \right\rfloor + 1 \), and \( k_6 = \left\lfloor \frac{55}{16}d \right\rfloor + 1 \). Define \( W_{d}(m-3) \) as an edge-colored graph with \( k_m \) colors. Suppose, for the sake of contradiction, that no rainbow $ F_m $ subgraph exists in $ W_d(m-3) $.
Thus, each $F_m$ contained in $W_{d}(m-3)$ has a color $c$ with $p^{(m)}(c):=p(c,W_{d}(m-3),F_m)\ge 1$.
Set $A^{(m)}_i:=A_i(W_{d}(m-3))$ and $p^{(m)}_j(c):=p_j(c,W_{d}(m-3),F_m)$.

In the graph $W_{d}(m-2)$, each boundary vertex $w$ is in $2m-6$ $F_5$-subgraphs centered at $w$. Thus, the number of $F_m$ contained in $W_d(m-3)$ is exactly $(3m-9)d$, and moreover, 
each spoke appears in $\alpha_m$ distinct $F_m$-subgraphs and each rim edge in exactly $ \beta_m $, where $\alpha_5=\beta_5=14$, $\alpha_6=19$ and $\beta_6=24$,
For a color $c\in A$, let $x:=x(c)$ and $y:=y(c)$ be the number of spokes and rim edges colored $c$. Thus, for any $c\in A_i$ we have
\begin{align} \label{eq-51}
    2p^{(m)}(c)\le 2p^{(m)}(c)+p^{(m)}_1(c)\le \sum\limits_{j\ge 1}jp^{(m)}_j(c) = \alpha_m x+\beta_my\leq \beta_m (x+y)=\beta_m i.
\end{align}
By employing the method analogous to those used in establishing Lemmas~\ref{lem:p2} and~\ref{lem:p3}, we derive
\begin{equation} \label{eq-50}
    \begin{cases}
p^{(5)}(c) \leq 9  & \text{for any } c \in A^{(5)}_2, \\
p^{(5)}(c) \leq 15 & \text{for any } c \in A^{(5)}_3, \\
p^{(6)}(c) \leq 15 & \text{for any } c \in A^{(6)}_2, \\
p^{(6)}(c) \leq 27 & \text{for any } c \in A^{(6)}_3,
\end{cases}
\end{equation}
where the bounds arise from four distinct configurations:
\begin{itemize}
    \item when two consecutive spokes are colored $c$, there exist precisely nine $F_5$-subgraphs containing these spokes in $W_d(2)$;
    \item when three consecutive spokes are colored $c$, at most fifteen $F_5$-subgraphs include at least two of these spokes in $W_d(2)$;
    \item when two consecutive rim edges are colored $c$, there exist at most fifteen $F_6$-subgraphs containing these rim edges in $W_d(3)$;
    \item when three consecutive rim edges are colored $c$, at most twenty-seven $F_6$-subgraphs include at least two of these rim edges in $W_d(3)$.
\end{itemize}
We require \( d \geq 5 \); otherwise, the first two bounds may become \( 10 \) and \( 16 \), disturbing the afterward calculations. Although assuming \( d \geq 6 \) could reduce the last bound to \( 24 \), this adjustment does not strengthen our final conclusion. Besides, we have
\begin{align} \label{eq-52}
    (m-2)d=|E(W_{d}(m-3))|=\sum\limits_{i\ge 1}i|A^{(m)}_i|.
\end{align}

With these bounds in hand, we deduce
\begin{align}
    \notag 6d &\le \sum\limits_{c\in [k] }p^{(5)}(c) = \sum\limits_{i\ge 2}\sum\limits_{c\in A^{(5)}_i}p^{(5)}(c) = \sum\limits_{c\in A^{(5)}_2}p^{(5)}(c)+\sum\limits_{c\in A^{(5)}_3}p^{(5)}(c)+\sum\limits_{i\ge 4}\sum\limits_{c\in A^{(5)}_i}p^{(5)}(c)
    \\
   \notag  &\overset{\eqref{eq-51}\eqref{eq-50}}{\le} 9|A^{(5)}_2|+15|A^{(5)}_3|+\sum\limits_{i\ge 4} 7i|A^{(5)}_i|\\
   \notag &\overset{\eqref{eq-52}}{=} 9|A^{(5)}_2|+15|A^{(5)}_3|+7(3d-|A^{(5)}_1|-2|A^{(5)}_2|-3|A^{(5)}_3|)=21d-(7|A^{(5)}_1|+5|A^{(5)}_2|+6|A^{(5)}_3|).
\end{align}
It follows 
\begin{align*}
    \frac{3}{4}|A^{(5)}_1|+\frac{2}{4}|A^{(5)}_2|+\frac{1}{4}|A^{(5)}_3| \le \frac{3}{28} \bigg(7|A^{(5)}_1|+5|A^{(5)}_2|+6|A^{(5)}_3|\bigg)\le \frac{45}{28}d.
\end{align*}
Thus,
\begin{align*}
    k_5=\bigg\lfloor \frac{33}{14}d  \bigg\rfloor +1 &=\sum_{i\geq 1}|A^{(5)}_i| 
    \le \frac{3}{4}|A^{(5)}_1|+\frac{2}{4}|A^{(5)}_2|+\frac{1}{4}|A^{(5)}_3|+\frac{1}{4}\sum_{i\geq 1}i|A^{(5)}_i|\\
    &\overset{\eqref{eq-52}}{=}\frac{3}{4}|A^{(5)}_1|+\frac{2}{4}|A^{(5)}_2|+\frac{1}{4}|A^{(5)}_3|+\frac{3}{4}d
    \le\frac{45}{28}d+\frac{3}{4}d=\frac{33}{14}d.
\end{align*}
For $m=6$, we have 
\begin{align}
    \notag 9d &\le \sum\limits_{c\in [k] }p^{(6)}(c) = \sum\limits_{i\ge 2}\sum\limits_{c\in A^{(6)}_i}p^{(6)}(c) = \sum\limits_{c\in A^{(6)}_2}p^{(6)}(c)+\sum\limits_{c\in A^{(6)}_3}p^{(6)}(c)+\sum\limits_{i\ge 4}\sum\limits_{c\in A^{(6)}_i}p^{(6)}(c)
    \\
   \notag  &\overset{\eqref{eq-51}\eqref{eq-50}}{\le} 15|A^{(6)}_2|+27|A^{(6)}_3|+\sum\limits_{i\ge 4} 12i|A^{(6)}_i| \\
   \notag &\overset{\eqref{eq-52}}{=} 15|A^{(6)}_2|+27|A^{(6)}_3|+12(4d-|A^{(6)}_1|-2|A^{(6)}_2|-3|A^{(6)}_3|)=48d-(12|A^{(6)}_1|+9|A^{(6)}_2|+9|A^{(6)}_3|).
\end{align}
It follows 
\begin{align*}
    \frac{3}{4}|A^{(6)}_1|+\frac{2}{4}|A^{(6)}_2|+\frac{1}{4}|A^{(6)}_3| \le \frac{3}{16} \bigg(4|A^{(6)}_1|+3|A^{(6)}_2|+3|A^{(6)}_3|\bigg)\le \frac{39}{16}d.
\end{align*}
Thus,
\begin{align*}
    k_6=\bigg\lfloor \frac{55}{16}d  \bigg\rfloor +1 &=\sum_{i\geq 1}|A^{(6)}_i| 
    \le \frac{3}{4}|A^{(6)}_1|+\frac{2}{4}|A^{(6)}_2|+\frac{1}{4}|A^{(6)}_3|+\frac{1}{4}\sum_{i\geq 1}i|A^{(6)}_i|\\
    &\overset{\eqref{eq-52}}{=}\frac{3}{4}|A^{(6)}_1|+\frac{2}{4}|A^{(6)}_2|+\frac{1}{4}|A^{(6)}_3|+ d
    \le\frac{39}{16}d+ d=\frac{55}{16}d.
\end{align*}
These two contradictions complete the proof.
\end{proof}

We do not know whether the two bounds in Theorem~\ref{thm5} are tight. Thus, determining the exact values of 
$\mathrm{rb}(W_d(2), F_5)$ and $\mathrm{rb}(W_d(3), F_6)$ remains an interesting open problem. More broadly, the problem of fixing $\mathrm{rb}(W_d(s), F_t)$ is still open for $s = 2$ and $t \leq 5$, or $s \geq 3$ and $t \leq 6$.

\section*{Acknowledgments}

The authors thank Ping Chen, Yali Wu, and Dezhi Zou for participating in the initial discussion of this paper's topic.

\bibliography{ref}

@article{Liu_2023,
author = {Liu, Xizhi and Song, Jialei},
title = {Hypergraph Anti-Ramsey Theorems},
journal = {Journal of Graph Theory},
volume = {108},
number = {4},
pages = {808-816},
keywords = {anti-Ramsey problem, expansion of hypergraphs, hypergraph Turán problem, splitting hypergraphs, stability},
doi = {https://doi.org/10.1002/jgt.23204},
url = {https://onlinelibrary.wiley.com/doi/abs/10.1002/jgt.23204},
eprint = {https://onlinelibrary.wiley.com/doi/pdf/10.1002/jgt.23204},
abstract = {ABSTRACT The anti-Ramsey number \$\text{ar}(n,F)\$ of an \$r\$ -graph \$F\$ is the minimum number of colors needed to color the complete \$n\$ -vertex \$r\$ -graph to ensure the existence of a rainbow copy of \$F\$ . We establish a removal-type result for the anti-Ramsey problem of \$F\$ when \$F\$ is the expansion of a hypergraph with a smaller uniformity. We present two applications of this result. First, we refine the general bound \$\text{ar}(n,F)=\text{ex}(n,{F}\_{-})+o({n}^{r})\$ proved by Erdős–Simonovits–Sós, where \${F}\_{-}\$ denotes the family of \$r\$ -graphs obtained from \$F\$ by removing one edge. Second, we determine the exact value of \$\text{ar}(n,F)\$ for large \$n\$ in cases where \$F\$ is the expansion of a specific class of graphs. This extends results of Erdős–Simonovits–Sós on complete graphs to the realm of hypergraphs.},
year = {2025}
}

@article{QIN2020124888,
title = {Rainbow numbers for small graphs in planar graphs},
journal = {Applied Mathematics and Computation},
volume = {371},
pages = {124888},
year = {2020},
issn = {0096-3003},
doi = {https://doi.org/10.1016/j.amc.2019.124888},
url = {https://www.sciencedirect.com/science/article/pii/S009630031930880X},
author = {Zhongmei Qin and Hui Lei and Shasha Li},
keywords = {Rainbow number, Anti-Ramsey number, Plane triangulation, Wheel graph},
abstract = {Let G be a family of graphs and H be a subgraph of at least one of the graphs in G. The rainbow number for H with respect to G, denoted rb(G,H), is the minimum number k such that, if H⊆G∈G, then any k-edge-coloring of G contains a rainbow H (i.e., any two edges of H are colored distinct). Denote by Tn the class of all plane triangulations of order n and Wd the wheel graph of order d+1. In this paper, we determine the exact rainbow numbers for matchings and a triangle with one or two pendant edges with respect to Wd, and the exact rainbow numbers for the triangle with one pendant edge with respect to Tn. Furthermore, we give upper bounds of rainbow numbers for triangles with two pendant edges with respect to Tn.}
}

@incollection {MR379258,
    AUTHOR = {Erd{\H o}s, P. and Simonovits, M. and S{\'o}s, V. T.},
     TITLE = {Anti-{R}amsey theorems},
 BOOKTITLE = {Infinite and finite sets ({C}olloq., {K}eszthely, 1973;
              dedicated to {P}. {E}rd\H os on his 60th birthday), {V}ols.
              {I}, {II}, {III}},
    SERIES = {Colloq. Math. Soc. J\'anos Bolyai},
    VOLUME = {Vol. 10},
     PAGES = {633--643},
 PUBLISHER = {North-Holland, Amsterdam-London},
      YEAR = {1975},
   MRCLASS = {05C15},
  MRNUMBER = {379258},
MRREVIEWER = {Rudolf\ Halin},
}

@article{10.1002/jgt.21803,
author = {Hor\v{n}\'{a}k, Mirko and Jendrol', Stanislav and Schiermeyer, Ingo and Sot\'{a}k, Roman},
title = {Rainbow Numbers for Cycles in Plane Triangulations},
year = {2015},
publisher = {John Wiley \& Sons, Inc.},
address = {USA},
volume = {78},
number = {4},
issn = {0364-9024},
url = {https://doi.org/10.1002/jgt.21803},
doi = {10.1002/jgt.21803},
abstract = {In the article, the existence of rainbow cycles in edge colored plane triangulations is studied. It is shown that the minimum number rb Tn,C3 of colors that force the existence of a rainbow C3 in any n-vertex plane triangulation is equal to \"{\i} 3n-42\"{\i} . For k\"{\i} 4 a lower bound and for k∈{4,5} an upper bound of the number rb Tn,Ck is determined.},
journal = {Journal of Graph Theory},
pages = {248-257},
numpages = {10},
keywords = {edge coloring, rainbow number, rainbow subgraph, triangulation}
}

@article{Alon1983OnAC,
  title={On a conjecture of {E}rd\"ous, {S}imonovits, and {S}\'os concerning anti-Ramsey theorems},
  author={Noga Alon},
  journal={Journal of Graph Theory},
  year={1983},
  volume={7},
  pages={91-94},
  url={https://api.semanticscholar.org/CorpusID:44877278}
}

@article{DBLP:journals/combinatorics/FujitaKSS09,
  author       = {Shinya Fujita and
                  Atsushi Kaneko and
                  Ingo Schiermeyer and
                  Kazuhiro Suzuki},
  title        = {A Rainbow $k$-Matching in the Complete Graph with r Colors},
  journal      = {Electronic Journal of Combinatorics},
  volume       = {16},
  number       = {1},
  year         = {2009},
  url          = {https://doi.org/10.37236/140},
  doi          = {10.37236/140},
  timestamp    = {Fri, 10 Jun 2022 10:35:07 +0200},
  biburl       = {https://dblp.org/rec/journals/combinatorics/FujitaKSS09.bib},
  bibsource    = {dblp computer science bibliography, https://dblp.org}
}

@article{10.1016/j.disc.2011.10.017,
author = {Haas, Ruth and Young, Michael},
title = {The anti-Ramsey number of perfect matching},
year = {2012},
publisher = {Elsevier Science Publishers B. V.},
address = {NLD},
volume = {312},
number = {5},
issn = {0012-365X},
url = {https://doi.org/10.1016/j.disc.2011.10.017},
doi = {10.1016/j.disc.2011.10.017},
abstract = {An r-edge coloring of a graph G is a mapping h:E(G)->[r], where h(e) is the color assigned to edge e@?E(G). An exactr-edge coloring is an r-edge coloring h such that there exists an e@?E(G) with h(e)=i for all i@?[r]. Let h be an edge coloring of G. We say G is rainbow if no two edges in G are assigned the same color by h. The anti-Ramsey number, AR(G,n), is the smallest integer r such that for any exact r-edge coloring of K"n there exists a subgraph isomorphic to G that is rainbow. In this paper we confirm a conjecture of Fujita, Kaneko, Schiermeyer, and Suzuki that states AR(M"k,2k)=max{2k-32+3,k-22+k^2-2}, where M"k is a matching of size k>=3.},
journal = {Discrete Mathematics},
pages = {933-937},
numpages = {5},
keywords = {Totally multicolored, Rainbow, Matching, Heterochromatic, Edge-coloring, Anti-Ramsey}
}

@article{10.1007/s00373-021-02302-z,
author = {Jin, Zemin and Zhong, Kangyun and Sun, Yuefang},
title = {Anti-Ramsey Number of Triangles in Complete Multipartite Graphs},
year = {2021},
publisher = {Springer-Verlag},
address = {Berlin, Heidelberg},
volume = {37},
number = {3},
issn = {0911-0119},
url = {https://doi.org/10.1007/s00373-021-02302-z},
doi = {10.1007/s00373-021-02302-z},
abstract = {An edge-colored graph is called rainbow if all its edges are colored distinct. The anti-Ramsey number of a graph family F in the graph G, denoted by AR(G,F), is the maximum number of colors in an edge-coloring of G without rainbow subgraph in F. The anti-Ramsey number for the short cycle C3 has been determined in a few graphs. Its anti-Ramsey number in the complete graph can be easily obtained from the lexical edge-coloring. Gorgol considered the problem in complete split graphs which contains complete graphs as a subclass. In this paper, we study the problem in the complete multipartite graph which further enlarges the family of complete split graphs. The anti-Ramsey numbers for C3 and C3+ in complete multipartite graphs are determined. These results contain the known results for C3 and C3+ in complete and complete split graphs as corollaries.},
journal = {Graphs and Combinatorics},
pages = {1025-1044},
numpages = {20},
keywords = {Anti-Ramsey number, Rainbow triangle, Complete multipartite graph, 05C15, 05C35, 05C55}
}

@article{LI20092575,
title = {Bipartite rainbow numbers of matchings},
journal = {Discrete Mathematics},
volume = {309},
number = {8},
pages = {2575-2578},
year = {2009},
issn = {0012-365X},
doi = {https://doi.org/10.1016/j.disc.2008.05.011},
url = {https://www.sciencedirect.com/science/article/pii/S0012365X08003300},
author = {Xueliang Li and Jianhua Tu and Zemin Jin},
keywords = {Edge-coloring, Rainbow subgraph, Rainbow number},
abstract = {Given two graphs G and H, let f(G,H) denote the maximum number c for which there is a way to color the edges of G with c colors such that every subgraph H of G has at least two edges of the same color. Equivalently, any edge-coloring of G with at least rb(G,H)=f(G,H)+1 colors contains a rainbow copy of H, where a rainbow subgraph of an edge-colored graph is such that no two edges of it have the same color. The number rb(G,H) is called the rainbow number of H with respect to G, and simply called the bipartite rainbow number of H if G is the complete bipartite graph Km,n. Erdős, Simonovits and Sós showed that rb(Kn,K3)=n. In 2004, Schiermeyer determined the rainbow numbers rb(Kn,Kk) for all n≥k≥4, and the rainbow numbers rb(Kn,kK2) for all k≥2 and n≥3k+3. In this paper we will determine the rainbow numbers rb(Km,n,kK2) for all k≥1.}
}

@article{JIN20182846,
title = {Rainbow number of matchings in planar graphs},
journal = {Discrete Mathematics},
volume = {341},
number = {10},
pages = {2846-2858},
year = {2018},
issn = {0012-365X},
doi = {https://doi.org/10.1016/j.disc.2018.06.044},
url = {https://www.sciencedirect.com/science/article/pii/S0012365X18302152},
author = {Zemin Jin and Kun Ye},
keywords = {Rainbow number, Rainbow matching, Planar graph},
abstract = {The rainbow number rb(G,H) for the graph H in G is defined to be the minimum integer c such that any c-edge-coloring of G contains a rainbow H. As one of the most important structures in graphs, the rainbow number of matchings has drawn much attention and has been extensively studied. Jendrol' et al. initiated the rainbow number of matchings in planar graphs and they obtained bounds for the rainbow number of the matching kK2 in the plane triangulations, where the gap between the lower and upper bounds is O(k3). In this paper, we show that the rainbow number of the matching kK2 in maximal outerplanar graphs of order n is n+O(k). Using this technique, we show that the rainbow number of the matching kK2 in some subfamilies of plane triangulations of order n is 2n+O(k). The gaps between our lower and upper bounds are only O(k).}
}

@article{GYORI2025114523,
title = {On the rainbow planar Tur{\'a}n number of paths},
journal = {Discrete Mathematics},
volume = {348},
number = {10},
pages = {114523},
year = {2025},
issn = {0012-365X},
doi = {https://doi.org/10.1016/j.disc.2025.114523},
url = {https://www.sciencedirect.com/science/article/pii/S0012365X25001311},
author = {Ervin Gy{\H o}ri and Ryan R. Martin and Addisu Paulos and Casey Tompkins and Kitti Varga},
keywords = {Turán number, Rainbow Turán number, Planar graph},
abstract = {An edge-colored graph is said to contain a rainbow-F if it contains F as a subgraph and every edge of F is a distinct color. The problem of maximizing the number of edges among n-vertex properly edge-colored graphs not containing a rainbow-F, known as the rainbow Turán problem, was initiated by Keevash, Mubayi, Sudakov, and Verstraëte. We investigate a variation of this problem with the additional restriction that the graph is planar and we denote the corresponding extremal number by exP⁎(n,F). In particular, we determine exP⁎(n,P5), where P5 denotes the 5-vertex path.}
}

@article{LAN20193216,
title = {Planar anti-Ramsey numbers of paths and cycles},
journal = {Discrete Mathematics},
volume = {342},
number = {11},
pages = {3216-3224},
year = {2019},
issn = {0012-365X},
doi = {https://doi.org/10.1016/j.disc.2019.06.034},
url = {https://www.sciencedirect.com/science/article/pii/S0012365X19302304},
author = {Yongxin Lan and Yongtang Shi and Zi-Xia Song},
keywords = {Rainbow subgraph, Anti-Ramsey number, Plane triangulation},
abstract = {Motivated by anti-Ramsey numbers introduced by Erdős, Simonovits and Sós in 1975, we study the anti-Ramsey problem when host graphs are plane triangulations. Given a positive integer n and a planar graph H, let Tn(H) be the family of all plane triangulations T on n vertices such that T contains a subgraph isomorphic to H. The planar anti-Ramsey number of H, denoted arP(n,H), is the maximum number of colors in an edge-coloring of a plane triangulation T∈Tn(H) such that T contains no rainbow copy of H. Analogously to anti-Ramsey numbers and Turán numbers, planar anti-Ramsey numbers are closely related to planar Turán numbers, where the planar Turán number of H is the maximum number of edges of a planar graph on n vertices without containing H as a subgraph. The study of arP(n,H) (under the name of rainbow numbers) was initiated by Horňák et al. (2015). In this paper we study planar anti-Ramsey numbers for paths and cycles. We first improve existing lower bound for arP(n,Ck) when k≥5 and n≥k2−k. Then, using the main ideas in the above-mentioned paper, we obtain upper bounds for arP(n,C6) when n≥8 and arP(n,C7) when n≥13, respectively. Finally, we establish lower bounds for arP(n,Pk) when n≥k≥8.}
}

@article{JIN2020124724,
title = {Rainbow triangles in edge-colored Kneser graphs},
journal = {Applied Mathematics and Computation},
volume = {365},
pages = {124724},
year = {2020},
issn = {0096-3003},
doi = {https://doi.org/10.1016/j.amc.2019.124724},
url = {https://www.sciencedirect.com/science/article/pii/S0096300319307167},
author = {Zemin Jin and Fang Wang and Huaping Wang and Bihong Lv},
keywords = {Kneser graph, Rainbow triangle, Anti-Ramsey number},
abstract = {An edge-colored graph is called rainbow if all the edges have the different colors. The anti-Ramsey number AR(G, H) of a graph H in the graph G is defined to be the maximum number of colors in an edge-coloring of G which does not contain any rainbow H. In this paper, the existence of rainbow triangles in edge-colored Kneser graphs is studied. We give bounds for the anti-Ramsey number of triangles in Kneser graphs. Also, the anti-Ramsey number of triangles with an pendant edge is studied and the bounds are equal to bounds for triangles.}
}

@article{doi:10.1137/19M1244950,
author = {Gu, Ran and Li, Jiaao and Shi, Yongtang},
title = {Anti-Ramsey Numbers of Paths and Cycles in Hypergraphs},
journal = {SIAM Journal on Discrete Mathematics},
volume = {34},
number = {1},
pages = {271-307},
year = {2020},
doi = {10.1137/19M1244950},
URL = {  https://doi.org/10.1137/19M1244950},
eprint = {  https://doi.org/10.1137/19M1244950},
    abstract = { The anti-Ramsey problem was introduced by Erdös, Simonovits, and Sós in 1970s. The anti-Ramsey number of a hypergraph H, ar(n,s, H), is the smallest integer c such that in any coloring of the edges of the s-uniform complete hypergraph on n vertices with exactly c colors, there is a copy of H whose edges have distinct colors. In this paper, we determine the anti-Ramsey numbers of linear paths and loose paths in hypergraphs for sufficiently large n and give bounds for the anti-Ramsey numbers of Berge paths. Similar exact anti-Ramsey numbers are obtained for linear/loose cycles, and bounds are obtained for Berge cycles. Our main tools are the path extension technique and stability results on hypergraph Turán problems of paths and cycles. }
}

@article{zkahya2013AntiRamseyNO,
  title={Anti-Ramsey number of matchings in hypergraphs},
  author={Lale {\"O}zkahya and Michael Young},
  journal={Discrete Mathematics},
  year={2013},
  volume={313},
  pages={2359-2364},
  url={https://api.semanticscholar.org/CorpusID:17696164}
}

@article{jakhar2025rainbow,
  title={Rainbow numbers in planar host graphs},
  author={Jakhar, Jagjeet and Budden, Mark and Moun, Monu},
  journal={Filomat},
  volume={39},
  number={8},
  pages={2695--2710},
  year={2025},
  publisher={JSTOR}
}

@article{CHEN20192106,
title = {Planar anti-Ramsey numbers of matchings},
journal = {Discrete Mathematics},
volume = {342},
number = {7},
pages = {2106-2111},
year = {2019},
issn = {0012-365X},
doi = {https://doi.org/10.1016/j.disc.2019.04.005},
url = {https://www.sciencedirect.com/science/article/pii/S0012365X19301244},
author = {Gang Chen and Yongxin Lan and Zi-Xia Song},
keywords = {Rainbow subgraph, Anti-Ramsey number, Planar anti-Ramsey number},
abstract = {Given a positive integer n and a planar graph H, let Tn(H) be the family of all plane triangulations T on n vertices such that T contains a subgraph isomorphic to H. The planar anti-Ramsey number of H, denoted arP(n,H), is the maximum number of colors in an edge-coloring of a plane triangulation T∈Tn(H) such that T contains no rainbow copy of H. In this paper we study planar anti-Ramsey numbers of matchings. For all t≥1, let Mt denote a matching of size t. We prove that for all t≥6 and n≥3t−6, 2n+3t−15≤arP(n,Mt)≤2n+4t−14, which significantly improves the existing lower and upper bounds for arP(n,Mt). It seems that for each t≥6, the lower bound we obtained is the exact value of arP(n,Mt) for sufficiently large n. This is indeed the case for M6. We prove that arP(n,M6)=2n+3 for all n≥30.}
}

@article{JENDROL2014158,
 author = {Jendrol', Stanislav and Schiermeyer, Ingo and Tu, Jianhua},
 title = {Rainbow numbers for matchings in plane triangulations},
 journal = {Discrete Mathematics},
 issn = {0012-365X},
 volume = {331},
 pages = {158--164},
 year = {2014},
 language = {English},
 doi = {10.1016/j.disc.2014.05.012},
 keywords = {05C70,05C15,05C10},
 zbMATH = {6328309},
 Zbl = {1297.05191}
}

@article{QIN2019221,
title = {Improved bounds for rainbow numbers of matchings in plane triangulations},
journal = {Discrete Mathematics},
volume = {342},
number = {1},
pages = {221-225},
year = {2019},
issn = {0012-365X},
doi = {https://doi.org/10.1016/j.disc.2018.09.031},
url = {https://www.sciencedirect.com/science/article/pii/S0012365X1830325X},
author = {Zhongmei Qin and Yongxin Lan and Yongtang Shi},
keywords = {Rainbow number, Plane triangulation, Matching},
abstract = {Given two graphs G and H, the rainbow number rb(G,H) for H with respect to G is defined as the minimum number k such that any k-edge-coloring of G contains a rainbow H, i.e., a copy of H, all of whose edges have different colors. Denote by kK2 a matching of size k and Tn the class of all plane triangulations of order n, respectively. In Jendrol′ et al. (2014), the authors determined the exact values of rb(Tn,kK2) for 2≤k≤4 and proved that 2n+2k−9≤rb(Tn,kK2)≤2n+2k−7+22k−23 for k≥5. In this paper, we improve the upper bounds and prove that rb(Tn,kK2)≤2n+6k−16 for n≥2k and k≥5. Especially, we show that rb(Tn,5K2)=2n+1 for n≥11.}
}

@article{QIN2021112301,
title = {Exact rainbow numbers for matchings in plane triangulations},
journal = {Discrete Mathematics},
volume = {344},
number = {4},
pages = {112301},
year = {2021},
issn = {0012-365X},
doi = {https://doi.org/10.1016/j.disc.2021.112301},
url = {https://www.sciencedirect.com/science/article/pii/S0012365X21000145},
author = {Zhongmei Qin and Yongxin Lan and Yongtang Shi and Jun Yue},
keywords = {Rainbow number, Anti-Ramsey number, Plane triangulation, Matching},
abstract = {Given two graphs G and H, the rainbow number rb(G,H) for H with respect to G is defined as the minimum number k such that any k-edge-coloring of G contains a rainbow H, i.e., a copy of H, all of its edges have different colors. Denote by Mt a matching of size t and Tn the class of all plane triangulations of order n, respectively. Jendrol′ et al. initiated to investigate the rainbow numbers for matchings in plane triangulations. They proved some bounds for the values of rb(Tn,Mt) and also obtained the exact values for t=2,3,4. Later, the exact values for t=5 and t=6 have been determined by Qin et al. and Chen et al., respectively. Chen et al. also proved that 2n+3t−14≤rb(Tn,Mt)≤2n+4t−13 for all n≥3t−6 and t≥6. In this paper, we determine the exact values of rb(Tn,Mt) for large n, namely, rb(Tn,Mt)=2n+3t−14 for all n≥9t+3 and t≥7.}
}

@article{CHEN20093370,
title = {Complete solution for the rainbow numbers of matchings},
journal = {Discrete Mathematics},
volume = {309},
number = {10},
pages = {3370-3380},
year = {2009},
issn = {0012-365X},
doi = {https://doi.org/10.1016/j.disc.2008.10.002},
url = {https://www.sciencedirect.com/science/article/pii/S0012365X08005943},
author = {He Chen and Xueliang Li and Jianhua Tu},
keywords = {Edge-colored graph, Rainbow subgraph, Rainbow number},
abstract = {For a given graph H and a positive n, the rainbow number of H, denoted by rb(n,H), is the minimum integer k so that in any edge-coloring of Kn with k colors there is a copy of H whose edges have distinct colors. In 2004, Schiermeyer determined rb(n,kK2) for all n≥3k+3. The case for smaller values of n (namely, n∈[2k,3k+2]) remained generally open. In this paper we extend Schiermeyer’s result to all plausible n and hence determine the rainbow number of matchings.}
}

@Article{sym14061252,
AUTHOR = {Xiang, Changyuan and Lan, Yongxin and Yan, Qinghua and Xu, Changqing},
TITLE = {The Outer-Planar Anti-Ramsey Number of Matchings},
JOURNAL = {Symmetry},
VOLUME = {14},
YEAR = {2022},
NUMBER = {6},
ARTICLE-NUMBER = {1252},
URL = {https://www.mdpi.com/2073-8994/14/6/1252},
ISSN = {2073-8994},
ABSTRACT = {A subgraph H of an edge-colored graph G is called rainbow if all of its edges have different colors. Let ar(G,H) denote the maximum positive integer t, such that there is a t-edge-colored graph G without any rainbow subgraph H. We denote by kK2 a matching of size k and On the class of all maximal outer-planar graphs on n vertices, respectively. The outer-planar anti-Ramsey number of graph H, denoted by ar(On,H), is defined as max{ar(On,H)|On∈On}. It seems nontrivial to determine the exact values for ar(On,H) because most maximal outer-planar graphs are asymmetry. In this paper, we obtain that ar(On,kK2)≤n+3k−8 for all n≥2k and k≥6, which improves the existing upper bound for ar(On,kK2), and prove that ar(On,kK2)=n+2k−5 for n=2k and k≥5. We also obtain that ar(On,6K2)=n+6 for all n≥29.},
DOI = {10.3390/sym14061252}
}

@article{10.1016/j.dam.2023.11.049,
author = {Jin, Zemin and Yu, Rui and Sun, Yuefang},
title = {Anti-Ramsey number of matchings in outerplanar graphs},
year = {2024},
publisher = {Elsevier Science Publishers B. V.},
address = {NLD},
volume = {345},
number = {C},
issn = {0166-218X},
url = {https://doi.org/10.1016/j.dam.2023.11.049},
doi = {10.1016/j.dam.2023.11.049},
journal = {Discrete Applied Mathematics},
pages = {125-135},
numpages = {11},
keywords = {Rainbow matching, Anti-Ramsey number, Planar graphs}
}

@article{QIN2021125918,
title = {Rainbow numbers for paths in planar graphs},
journal = {Applied Mathematics and Computation},
volume = {397},
pages = {125918},
year = {2021},
issn = {0096-3003},
doi = {https://doi.org/10.1016/j.amc.2020.125918},
url = {https://www.sciencedirect.com/science/article/pii/S0096300320308717},
author = {Zhongmei Qin and Shasha Li and Yongxin Lan and Jun Yue},
keywords = {Rainbow number, Plane triangulation, Path, Wheel graph},
abstract = {Given a family of graphs F and a subgraph H of F∈F, let rb(F,H) denote the smallest number k so that there is a rainbow H in any k-edge-colored F. We call it rainbow number for H in regard to F. The set of all plane triangulations of order n is denoted by Tn. The wheel graph of order d+1 and the path of order k are denoted by Wd and Pk, respectively. In this paper, we establish lower bounds of rb(Tn,Pk) for all k≥8, which improves the results in [Y. Lan, Y. Shi and Z-X. Song, Planar anti-Ramsey numbers for paths and cycles, Discrete Math. 342(11) (2019), 3216–3224.]. In addition, we also attain the accurate values or bounds of rb(Tn,Pk) for 4≤k≤7. Furthermore, we get lower and upper bounds of rb(Wd,Pk) for all k≥9 and obtain the accurate values of rb(Wd,Pk) for k∈{4,5,6,7,8,d+1}.}
}

@article{10.1016/j.disc.2025.114596,
author = {Ren, Lei and Lan, Yongxin and Xu, Changqing},
title = {The rainbow numbers of paths in maximal bipartite planar graphs},
year = {2025},
issue_date = {Nov 2025},
publisher = {Elsevier Science Publishers B. V.},
address = {NLD},
volume = {348},
number = {11},
issn = {0012-365X},
url = {https://doi.org/10.1016/j.disc.2025.114596},
doi = {10.1016/j.disc.2025.114596},
journal = {Discrete Mathematics},
numpages = {7},
keywords = {Maximal bipartite planar graph, Rainbow number, Path}
}

@article{JIN2022127356,
title = {Rainbow matchings in an edge-colored planar bipartite graph},
journal = {Applied Mathematics and Computation},
volume = {432},
pages = {127356},
year = {2022},
issn = {0096-3003},
doi = {https://doi.org/10.1016/j.amc.2022.127356},
url = {https://www.sciencedirect.com/science/article/pii/S0096300322004301},
author = {Zemin Jin and Huawei Ma and Rui Yu},
keywords = {Anti-Ramsey number, Rainbow graph, Matching},
abstract = {In this paper, we consider the existence of rainbow matchings in maximal bipartite planar graphs. We determine the maximum number of colors appearing in an edge-coloring of maximal bipartite planar graphs with a Hamilton cycle which does not contain any rainbow k-matching. The bounds given are best possible.}
}

@article{XU2016193,
title = {Rainbow cliques in edge-colored graphs},
journal = {European Journal of Combinatorics},
volume = {54},
pages = {193-200},
year = {2016},
issn = {0195-6698},
doi = {https://doi.org/10.1016/j.ejc.2015.12.013},
url = {https://www.sciencedirect.com/science/article/pii/S0195669815002772},
author = {Chuandong Xu and Xiaoxue Hu and Weifan Wang and Shenggui Zhang},
abstract = {An edge-colored graph H is called rainbow if e(H)=c(H), where e(H) and c(H) are the number of edges of H and colors used in H, respectively. For two graphs G and H, the rainbow number rb(G,H) is the minimum number of colors k such that for every edge-coloring of G using k colors, G contains a rainbow H. In this paper we prove that for an edge-colored graph G on n vertices with n≥k≥4, if e(G)+c(G)≥(n2)+tn,k−2+2, then G contains a rainbow clique Kk, where tn,k−2 is the Turán number. This implies the known result rb(Kn,Kk)=tn,k−2+2, and moreover, rb(G,Kk)≤e(G¯)+rb(Kn,Kk) for n≥k≥4.}
}

@article{ZHANG2022113100,
title = {Anti-Ramsey numbers for trees in complete multi-partite graphs},
journal = {Discrete Mathematics},
volume = {345},
number = {12},
pages = {113100},
year = {2022},
issn = {0012-365X},
doi = {https://doi.org/10.1016/j.disc.2022.113100},
url = {https://www.sciencedirect.com/science/article/pii/S0012365X22003065},
author = {Meiqiao Zhang and Fengming Dong},
keywords = {Anti-Ramsey number, Tree, Complete multi-partite graph, Edge coloring, Rainbow},
abstract = {Let G be a complete multi-partite graph of order n. In this paper, we consider the anti-Ramsey number ar(G,Tq) with respect to G and the set Tq of trees with q edges, where 2≤q≤n−1. For the case q=n−1, the result has been obtained by Lu, Meier and Wang. We will extend it to q<n−1. We first show that ar(G,Tq)=ℓq(G)+1, where ℓq(G) is the maximum size of a disconnected spanning subgraph H of G with the property that any two components of H together have at most q vertices. Using this equality, we obtain the exact values of ar(G,Tq) for n−3≤q≤n−1. Moreover, for the general case when (4n−2)/5≤q≤n−1, ar(G,Tq) can be determined by a simple algorithm. In particular, the explicit expression of ar(G,Tq) is given when G has a partite set much larger than all the other partite sets.}
}

@article{AN2026114692,
title = {The anti-Ramsey numbers of cliques in complete multi-partite graphs},
journal = {Discrete Mathematics},
volume = {349},
number = {2},
pages = {114692},
year = {2026},
issn = {0012-365X},
doi = {https://doi.org/10.1016/j.disc.2025.114692},
url = {https://www.sciencedirect.com/science/article/pii/S0012365X25003000},
author = {Yuyu An and Ervin Gya\H{o}ri and Binlong Li},
keywords = {Anti-Ramsey number, Multi-partite graph, Extremal coloring},
abstract = {A subgraph of an edge-colored graph is rainbow if all of its edges have different colors. Let G and H be two graphs. The anti-Ramsey number ar(G,H) is the maximum number of colors of an edge-coloring of G that does not contain a rainbow copy of H. In this paper, we study the anti-Ramsey numbers of Kk in complete multi-partite graphs. We determine the values of the anti-Ramsey numbers of Kk in complete k-partite graphs and in balanced complete r-partite graphs for r≥k.}
}

@article{XUE2022112782,
title = {Anti-Ramsey number of matchings in $r$-partite $r$-uniform hypergraphs},
journal = {Discrete Mathematics},
volume = {345},
number = {4},
pages = {112782},
year = {2022},
issn = {0012-365X},
doi = {https://doi.org/10.1016/j.disc.2021.112782},
url = {https://www.sciencedirect.com/science/article/pii/S0012365X21004957},
author = {Yisai Xue and Erfang Shan and Liying Kang},
keywords = {Anti-Ramsey number, -partite -uniform hypergraph},
abstract = {An edge-colored hypergraph is rainbow if all of its edges have different colors. Given two hypergraphs H and G, the anti-Ramsey number ar(G,H) of H in G is the maximum number of colors in a coloring of the edges of G so that there does not exist a rainbow copy of H. Li et al. determined the anti-Ramsey number of k-matchings in complete bipartite graphs. Jin and Zang showed the uniqueness of the extremal coloring. In this paper, as a generalization of these results, we determine the anti-Ramsey number arr(Kn1,…,nr,Mk) of k-matchings in complete r-partite r-uniform hypergraphs and show the uniqueness of the extremal coloring. Also, we show that Kk−1,n2,…,nr is the unique extremal hypergraph for Turán number exr(Kn1,…,nr,Mk) and show that arr(Kn1,…,nr, Mk)=exr(Kn1,…,nr,Mk−1)+1, which gives a multi-partite version result of Özkahya and Young's conjecture.}
}

@article{JIN2021112594,
title = {Anti-Ramsey number of matchings in a hypergraph},
journal = {Discrete Mathematics},
volume = {344},
number = {12},
pages = {112594},
year = {2021},
issn = {0012-365X},
doi = {https://doi.org/10.1016/j.disc.2021.112594},
url = {https://www.sciencedirect.com/science/article/pii/S0012365X21003071},
author = {Zemin Jin},
keywords = {Anti-Ramsey number, Rainbow matching, Hypergraph},
abstract = {Given an edge-coloring of a hypergraph G, G is said to be rainbow if any two edges of G receive different colors. The anti-Ramsey number AR(G,H) of a hypergraph H in a hypergraph G is defined to be the maximum integer k such that there exists a k-edge-coloring of G avoiding rainbow copies of H. The anti-Ramsey numbers of matchings have been extensively studied in several graph classes. But there is few results in hypermatchings and up to now, we only know the anti-Ramsey number of matchings, paths and cycles in complete uniform hypergraphs. In this paper, we determine the exact value of the anti-Ramsey number of a k-matching in a complete tripartite 3-uniform hypergraph. Interestingly, our results are parallel to the results in complete bipartite graphs.}
}

@article{REN202437,
title = {The rainbow numbers of cycles in maximal bipartite planar graph},
journal = {Discrete Applied Mathematics},
volume = {356},
pages = {37-43},
year = {2024},
issn = {0166-218X},
doi = {https://doi.org/10.1016/j.dam.2024.05.006},
url = {https://www.sciencedirect.com/science/article/pii/S0166218X24001884},
author = {Lei Ren and Yongxin Lan and Changqing Xu},
keywords = {Maximal bipartite planar graph, Rainbow number, Cycle},
abstract = {Let Bn denote the family of all maximal bipartite planar graphs on n vertices. Given a family of graphs H, let Bn(H) denote the family of all maximal bipartite planar graphs on n vertices which are not H-free. For graph G and a family of graphs H, if G is not H-free, the rainbow number of H in G, denoted by rb(G,H), is the minimum positive integer t, such that any t-edge-colored graph G contains a rainbow copy of some graph in H. The rainbow number of H in maximal bipartite planar graph, denoted by rb(Bn(H),H), is defined as max{rb(G,H)|G∈Bn(H)}. A cycle on l vertices is denoted by Cl. The graph obtained by arbitrarily adding one pendant edge at one vertex of a cycle Cl is denoted by Cl+. For any positive integer k, let Ck≔{C4,C6,…,C2k+2} and Ck,2i0+≔(Ck∖{C2i0})∪{C2i0+} for some i0∈{2,3,…,k+1}.In this paper, we firstly prove that rb(Bn,Ck)=rb(Bn,Ck,2i0+)=n+n−2k+2−1 for any n≥max{2k+2,5}and k+1≥i0≥2. We then obtain that rb(Bn(H),H)=2n−4 for any n≥|V(H)| and k≥3, where H∈{C2k,C2k+}.}
}

@article{Jiang_West_2003,
 author = {Jiang, Tao and West, Douglas B.},
 title = {On the {Erd{\H{o}}s}-{Simonovits}-{S{\'o}s} conjecture about the anti-{Ramsey} number of a cycle},
 journal = {Combinatorics, Probability and Computing},
 issn = {0963-5483},
 volume = {12},
 number = {5-6},
 pages = {585--598},
 year = {2003},
 language = {English},
 doi = {10.1017/S096354830300590X},
 keywords = {05C55,05C38},
 zbMATH = {2142654},
 Zbl = {1063.05100}
}
\bibliographystyle{acm}

\end{document}